\documentclass[a4paper,10pt]{article}
\usepackage[utf8x]{inputenc}
\usepackage{tracefnt,amsmath,tabu,array}
\usepackage{amssymb,graphicx,setspace,amsfonts,amsbsy}
\usepackage{pifont,latexsym,ifthen,amsthm,rotating,calc,textcase,booktabs,cancel,slashed}

\newtheorem{theorem}{Theorem}[section]
\newtheorem{lemma}[theorem]{Lemma}

\newtheorem{proposition}[theorem]{Proposition}

\newtheorem{remark}[theorem]{Remark}
\newcommand{\filledbox}{\leavevmode
  \hbox to.77778em{%
  \hfil\vbox to.675em{\hrule width.6em height.6em}\hfil}}

\newcommand{\Rm}{{\mathbb R}}

\begin{document}
\tabulinesep=1.0mm
\title{Nonexistence of pure bubble type II blow-up solutions to energy critical wave equation in the 3D radial case}

\author{Ruipeng Shen\\
Centre for Applied Mathematics\\
Tianjin University\\
Tianjin, China
}

\maketitle

\begin{abstract}
 In this work we consider the focusing, energy-critical wave equation in 3D radial case. According to the soliton resolution conjecture, which has been verified in the radial case, any type II blow-up solution decomposes into a superposition of several decoupled grounds states, a free wave and a small error, as time tends to the blow-up time. We prove that there does not exist any pure bubble type II blow up solutions. In other words, the free wave part is never zero in the soliton resolution of type II blow-up solutions, regardless of the bubble number. 
\end{abstract}

\section{Introduction} 

\subsection{Background}

In this work we consider the focusing, energy-critical wave equation in 3-dimensional space 
\[
 \left\{\begin{array}{ll} \partial_t^2 u - \Delta u = |u|^4 u, & (x,t) \in \Rm^3 \times \Rm;  \\ (u,u_t)|_{t=0} = (u_0,u_1)\in \dot{H}^1\times L^2. & \end{array} \right. \qquad \hbox{(CP1)}
\]
We focus on the solutions with radial symmetry. It is well known that the solutions to (CP1) satisfy the energy conservation law: 
\[
 E = \int_{\Rm^3} \left(\frac{1}{2}|\nabla u(x,t)|^2 + \frac{1}{2}|u_t(x,t)|^2 - \frac{1}{6}|u(x,t)|^6\right) {\rm d} x = \hbox{Const}. 
\]
The most important invariance among radial solutions is the dilation. Namely, if $u$ is a solution to (CP1), then 
\[ 
 u_\lambda = \frac{1}{\lambda^{1/2}} u\left(\frac{x}{\lambda}, \frac{t}{\lambda}\right), \qquad \lambda\in \Rm^+
\]
is also a solution to (CP1). We call this equation ``energy critical'' because the initial data of $u$ and $u_\lambda$ share the same $\dot{H}^1\times L^2$ norm, i.e. the energy norm for wave equations. 

The global behaviour of solutions in the focusing case is much complicated than that of the defocusing case $\partial_t^2 u - \Delta u = - |u|^4 u$, which was intensively studied in the last few decades of 20th century. In short, all finite-energy solutions in the defocusing case are defined for all time and scatter in both two time directions. Please see \cite{mg1, enscatter1, enscatter2, ss1, ss2, struwe}, for instance. If the energy norm of initial data is small, then the corresponding solution to (CP1) still scatters. However, the situation is different for large solutions. 

\paragraph{Ground states} A typical example of non-scattering solutions to (CP1) is the Talenti-Aubin solution
\[
 W(x) = \left(\frac{1}{3} + |x|^2\right)^{-1/2}. 
\]
This solution comes with the least energy among all (not necessarily radial) stationary solutions to (CP1), i.e. solutions to the elliptic equation $-\Delta u = F(u)$. In fact, all radial finite-energy stationary solutions are exactly $\{0, \pm W_\lambda\}$ with 
\begin{align*}
  W_\lambda = \frac{1}{\lambda^{1/2}} W\left(\frac{x}{\lambda}\right), \qquad \lambda > 0. 
\end{align*}
All these solutions (except for $0$) come with the same energy, thus they are all called ground states. 

\paragraph{Finite time blow-up} Unlike the defocusing case, solutions to (CP1) may blow up in finite time $T_+ \in \Rm^+$. A classical local theory then gives 
\[
 \|u\|_{L^5 L^{10} ([0,T_+)\times \Rm^3)} = + \infty. 
\]
We may further divide finite time blow-up solutions into two types: type I blow-up solution satisfies 
 \[
   \limsup_{t\rightarrow T_+} \|(u,u_t)\|_{\dot{H}^1\times L^2} = + \infty. 
  \]
 This type of solution can be constructed explicitly. Clearly the following solution to (CP1) 
 \[
   u(x,t) = \left(\frac{3}{4}\right)^{1/4} (T_+ -t)^{-1/2}, 
  \]
 blows up as $t\rightarrow T_+$. A smooth cut-off technique and the finite speed of propagation then gives a finite-energy type I blow-up solution. 

\paragraph{Type II blow-up solution} In the contrast, a type II blow-up solution satisfies 
 \[
  \limsup_{t\rightarrow T_+} \|(u,u_t)\|_{\dot{H}^1\times L^2} < +\infty. 
 \]
 Soliton resolution conjecture predicts a type II blow-up solution always decomposes into a superposition of decoupled solitary waves, a free wave (the radiation part) and a small error term as the time tends to the blow-up time. In the radial case, all solitary waves are ground states with small but very different sizes. More precisely we have 
 \[
  \vec{u}(t) = \sum_{j=1}^N \zeta_j (W_{\lambda_j(t)},0) + \vec{u}_L(t) + o(1), \qquad t\rightarrow T_+. 
 \]
 Here $\vec{u} = (u,u_t)$, $u_L$ is a free wave, $\zeta_j \in \{+1,-1\}$; the scale functions $\lambda_j(t) > 0$ satisfy
 \begin{align*}
  &\lim_{t\rightarrow T_+} \frac{\lambda_{j+1}(t)}{\lambda_j(t)} = 0;& & \lim_{t\rightarrow T_+} \frac{\lambda_{1}(t)}{T_+ - t} = 0. 
 \end{align*}
 A solution as above is usually called an $N$-bubble solution. In particular, if $u_L=0$, then we call it a pure $N$-bubble solution. Please note that this soliton resolution conjecture has been verified in the radial case, first by Duyckaerts-Kenig-Merle \cite{se} in the 3-dimensional case via a combination of profile decomposition and channel of energy method, then by Duyckaerts-Kenig-Merle \cite{oddhigh}, Duyckaerts-Kenig-Martel-Merle \cite{soliton4d}, Collot-Duyckaerts-Kenig-Merle \cite{soliton6d} and Jendrej-Lawrie \cite{anothersoliton} in higher dimensions, although the non-radial case is still an open problem(please see Duyckaerts-Jia-Kenig \cite{djknonradial} for a partial result). Recently the author \cite{dynamics3d} gives another proof of this conjecture in 3D radial case and discusses some further quantitative properties of the soliton resolution. 

\paragraph{Existence of type II blow-up solutions} The existence of radial type II blow-up solutions was verified before the first proof of the soliton resolution conjecture. One-bubble blow-up solutions were constructed first in Krieger-Schlag-Tataru \cite{slowblowup1}, then in Krieger-Schlag \cite{slowblowup2} and Donninger-Huang-Krieger-Schlag \cite{moreexamples}, with different choices of scale functions $\lambda_1(t)$. Similar type II blow-up solutions in higher dimensions have been discussed in Hillairet-Rapha\"{e}l \cite{4dtypeII} and Jendrej \cite{5dtypeII}. For the evolution of a (weak) solution beyond a type II blow-up time, one may refer to Krieger-Wong \cite{typeIblowup}. 

\subsection{Main topic and result}

A careful review of Krieger, Schlag and Tataru's work \cite{slowblowup1} shows that the radiation part $u_L$ can be arbitrarily small in the energy space but can not be zero in the example of type II blow-up solution they constructed. It is natural to ask whether a type II blow-up solution without any radiation (i.e. $u_L = 0$) exists or not. In other words, we are interested in the existence of pure bubble blow-up solutions. Previous results did give some hints to the answer. For example, Jendrej \cite{nonexistence2bubble} proved that pure two-bubble solution does not exist unless the signs of these two bubbles are the same, in the radial case of all dimensions $d\geq 3$. 

In this work we consider the 3D radial case, and show that the answer to the question above is negative. In other words, pure multi-bubble (or single-bubble) type II blow-up solution does not exist at all, regardless of the bubble number and/or signs of the bubbles. Our main result of this article is 

\begin{theorem} \label{thm main} 
 There does not exist any radial pure bubble type II blow-up solution $u$ to (CP1). In other words, the radiation part $u_L$ in the soliton resolution above can never be zero for any radial finite-time type II blow-up solution. 
\end{theorem}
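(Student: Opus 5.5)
We argue by contradiction. Suppose $u$ is a radial type II blow-up solution with blow-up time $T_+$ and soliton resolution
\[
 \vec{u}(t) = \sum_{j=1}^N \zeta_j (W_{\lambda_j(t)},0) + o(1), \qquad t\rightarrow T_+,
\]
with $N\geq 1$. The strategy is to combine the channel of energy method in the exterior region $|x|>T_+-t$ with an energy identity.

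\emph{Step 1: no energy near the light cone.} Since $u_L=0$ and all $\lambda_j(t)=o(T_+-t)$, the energy of $\vec{u}(t)$ in $\{|x|>\delta(T_+-t)\}$ tends to zero for every fixed $\delta>0$. Finite speed of propagation then shows that $\vec{u}(t)$ is, outside the cone $|x|<T_+-t$, the restriction of a solution whose data at $T_+$ vanish. Hence $\vec{u}(t)=0$ for $|x|>T_+-t$. Shifting time so that $T_+=0$, the solution is supported in the backward cone $\{|x|\leq -t\}$ for $t<0$, and in particular $u$ has compact support at each time $t<0$.

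\emph{Step 2: exterior rigidity.} For radial solutions in 3D, write $w=ru$, so that $w_{tt}-w_{rr}=r^{-4}|w|^4w$. The radial channel of energy estimates (Duyckaerts--Kenig--Merle) say the following. If a radial solution is non-radiative in the exterior region $\{r>R+|t|\}$, then its data on $\{r>R\}$ agree with a rescaled ground state $\pm W_\lambda$ or with $0$, provided they are small there, and an inductive argument in $R$ removes the smallness assumption. Fix $t_0<0$. The data $\vec{u}(t_0)$ have compact support, so in the forward direction $u$ is non-radiative for $r>|t-t_0|+R$ with $R=-t_0$. In the backward direction $u$ exists on a maximal interval. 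If it exists globally backward, compact support together with the finite energy radiation decomposition again gives non-radiation in both directions outside a ball. Applying the rigidity, the data restricted to $r>R$ must coincide with some $\pm W_\lambda$ or $0$. Since $W_\lambda$ does not vanish on any exterior region, the data must vanish there, which is consistent. Running this argument with decreasing $R$ down to $0$, via the inductive extension (unique continuation of the ODE $w_{rr}=-r^{-4}|w|^4w$ from a neighbourhood of the support edge), forces $\vec{u}(t_0)\equiv 0$. This contradicts blow-up.

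\emph{Main obstacle.} The delicate point is that non-radiation is only known in the forward direction within the lifespan, because the solution blows up at $t=0$. The channel of energy argument needs information for $t\geq t_0$ in the region $r>R+|t-t_0|$. There we have $r>-t$, so the solution is well-defined and identically zero in that exterior region up to the blow-up time and beyond by finite speed of propagation. The exterior solution therefore extends globally in the forward direction and is non-radiative there. The key step I would then carry out is a one-sided version of the exterior energy estimate: for radial 3D linear waves, the exterior energy lower bound holds in at least one time direction. The work is to show that the nonlinear perturbation argument can always use the forward direction. For this I would use the explicit formula $w(r,t)=f(r+t)-f(r-t)$ for the linear flow and establish the estimate for the outgoing part directly.

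If this works only partially, the fallback is a virial/energy identity. The energy equals $N\,E(W,0)$ minus the interaction terms, while the support in the cone gives $\int u_t(x\cdot\nabla u)\,dx=O(|t|)$. Combined with a modulation analysis of the bubble interactions, this would produce a contradiction. I expect this fallback to be much harder for general $N$ and sign configurations.
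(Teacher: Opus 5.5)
Your Step 1 is correct and matches the paper's starting point: $\vec u(t)$ is supported in $\{|x|\le |t|\}$ and $E(u)=nE(W,0)$. The argument breaks in Step 2. The rigidity theorem for radial non-radiative solutions requires non-radiation in \emph{both} time directions in $\{r>R+|t-t_0|\}$, and it must hold down to the smallest radius $R$ you want to reach. The inward induction is not a unique continuation of the stationary ODE from the edge of the support, since the data $\vec u(t_0)$ are not stationary inside the ball. Stationarity is only derived, step by step, from non-radiation in each region $\{r>R'+|t-t_0|\}$. Compact support of $\vec u(t_0)$ gives two-sided non-radiation only for $R\ge|t_0|$, where it is vacuous because $u\equiv 0$ there, so the induction never moves inward. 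What the backward-cone support actually gives is \emph{forward} non-radiation for every $R\ge 0$, i.e.\ $G_+(s)=0$ for $s>0$. The backward profile $G_-$ on $(t_0,0)$ is unconstrained. Applied correctly, rigidity gives the opposite of what you want. If $G_-$ vanished on $(t_0,0)$, then $u(\cdot,\cdot+t_0)$ would be non-radiative in $\Omega_0$, hence $0$ or $\pm W_\lambda$, which is absurd. So $\delta(t)=\|G_-\|_{L^2(t,0)}>0$ for every $t<0$, and this is where the paper's proof begins, not a contradiction.

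Your proposed repair, a one-sided exterior energy estimate that can always be run forward, already fails for free waves. The 3D radial lower bound holds in a direction dictated by the data, not one you can choose. Take $G_-\not\equiv 0$ supported in $[0,R]$ with $\int_0^R G_-=0$. By \eqref{initial data by radiation profile} the corresponding free wave has data supported in $\{|x|\le R\}$ and $G_+(s)=-G_-(-s)=0$ for $s>0$. So it is forward non-radiative in $\Omega_0$ yet nonzero. This is exactly the shape of $u(\cdot,\cdot+t_0)$ outside the light cone, so compact support plus forward non-radiation cannot force triviality. The nonlinear information (energy exactly $nE(W,0)$, bubble geometry) must enter quantitatively.

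Your fallback points the right way, since the paper does integrate $I'(t)=-\int|u_t|^2r^2\,dr$ from $t_*$ to $0$. But it lacks every ingredient that makes the argument close:
\begin{itemize}
\item the decomposition $\vec u(t)=\sum_j\zeta_j(W_{\lambda_j(t)},0)+\vec v_{t,L}(0)+\vec w(t)$, with $v_{t,L}$ built from $G_-$ and the refined bound $\|\vec w\|_{\mathcal H}\lesssim_n\tau(t)$ (Proposition \ref{refined main tool});
\item the identification $\lambda_1(t)^{1/2}\approx-\zeta_1\int_t^0G_-$, coming from the support (Lemma \ref{size of the first bubble lemma});
\item the use of $E(u)=nE(W,0)$ through Lemma \ref{large energy lemma} to find times with $\lambda_1(t)\ge|t|\delta(t)^{2+\kappa}$ and $\tau(t)\le\delta(t)^{2-\kappa}$;
\item a maximal-function bound for the time-integrated errors.
\end{itemize}
Only with all of these do the explicit leading terms give $\frac{\lambda_1^{1/2}}{2}\int_{t_*}^0G_- -\frac14\bigl(\int_{t_*}^0G_-\bigr)^2=O_n(|t_*|\delta(t_*)^{3-\kappa})$. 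This contradicts the lower bound $\gtrsim|t_*|\delta(t_*)^{2+\kappa}$. Note also that the energy enters through Lemma \ref{large energy lemma}, not through ``interaction terms'' in the virial computation.
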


\begin{remark}
 By the finite speed of propagation, Theorem \ref{thm main} also implies that the free wave $u_L$ can not vanish in any open neighbourhood of the blow-up point $(0,T_+) \in \Rm^3 \times \Rm$. The one-bubble case of this result has been previously known. Please see Duyckaerts-Merle \cite{threshold} and Jendrej \cite{uLlambda}. The novelty of this work lies in its applicability to all bubble numbers/signs. 
\end{remark}
  
\begin{remark}
 The situation of global solutions, i.e. solutions defined for all $t\geq 0$, is much different. Pure one-bubble global solutions do exist, for example, the ground states. In fact,  Duyckaerts-Merle \cite{threshold} classified all solutions to (CP1) with the threshold energy $E = E(W,0)$. It turns out that all radial pure one-bubble global solutions are $\pm W$, $\pm W^+$, $\pm W^-$ and their dilations and/or time translations (with some technical assumptions). Here $W^\pm$ are two special solutions satisfying 
\begin{itemize}
 \item Both $\vec{W}^\pm(t)$ converge to $(W,0)$ in the energy space as $t\rightarrow +\infty$;
 \item $W^-$ scatters in the negative time direction; $W^+$ blows up in finite time in the negative time direction. 
\end{itemize}
The author would like to mention that Jendrej \cite{nonexistence2bubble} also prove the nonexistence of pure 2-bubbles with opposite signs for global solutions. A major difference of the global case from the finite-time blow-up case is the lack of a natural reference point when we evaluate the first bubble size $\lambda_1$.  Please see Remark \ref{reference point} for more details. 
\end{remark}

\begin{remark}
 All previously known examples of soliton resolution are one-bubble solutions in the 3D radial case. Whether a multi-bubble solution exists or not is still an open problem, as far as the author knows. In high dimensional space, however, radial solutions with at least two bubbles have been constructed. Please see, for instance, Jendrej \cite{twobubble6d}. 
\end{remark}

 \begin{remark}
  Finite-time blow-up solution to mass-critical Schr\'{o}dinger equation without dispersion does exist. Please see Bourgain-Wang \cite{Schrzero} for more details. 
 \end{remark}

\begin{remark}
One may also consider the relationship between the blow-up behaviour and the radiation part (also called asymptotic profile in some works) in the soliton resolution. Please see Jendrej \cite{uLlambda}, for example. Similar topic can also be considered for other wave-type equations. For example, Jendrej-Lawrie-Rodriguez \cite{radiationpartWM} discussed the radiation part in the soliton resolution of one-bubble blow-up solutions to the $1$-equivariant wave maps. 
\end{remark} 

\subsection{General idea} 

Now we give the general idea and sketch the proof. Without loss of generality, we may assume that the blow-up time $T_+$ is zero and the bubble number is $n \geq 1$.  The basic idea is to utilize the following identity concerning the virial functional 
\[
 \frac{\rm d}{{\rm d} t} \int_{0}^\infty u_t \left(r u_r + \frac{1}{2} u\right) r^2 {\rm d} r = - \int_0^\infty |u_t|^2 r^2 {\rm d} r
\]
and give a contradiction by an integration from a suitable time $t_\ast < 0$ to $0$. To this purpose, we need to give very precise approximation of the solution $u$ for time sufficiently close to zero. The main tools are a few quantitative soliton resolution estimates given in the author's pervious work \cite{dynamics3d}, as well as a new, refined version of these estimates proved in this work. These estimates are proved by the radiation theory of wave equations. The author hopes that this article demonstrates the robustness of this radiation theory. The proof consists of several steps: 

\paragraph{Extraction of radiation} First of all, if $u$ were a radial pure $n$-bubble type II blow-up solution ($n\geq 1$), then no dispersion assumption would guarantee that $u$ is supported in the ball $\{x: |x|<|t|\}$ for small negative time $t$. A combination of small data theory and finite speed of propagation shows that we may extend the domain of $u$ to the region ($t_0$ is a suitable small negative time) 
\[
 \{(x,t): |x|>t\geq 0\} \cup \left(\Rm^3 \times (t_0,0)\right) \cup \{(x,t): |x|>t_0-t\geq 0\},
\]
with a function (called radiation profile) $G_-\in L^2(t_0,+\infty)$ such that 
\[
 \lim_{t\rightarrow -\infty} \int_{r>|t-t_0|} \left(|r u_r(r,t) - G_-(r+t)|^2 + |r u_t(r,t) - G_-(r+t)|^2\right) {\rm d} r = 0. 
\]
In addition, the inequality $\|G_-\|_{L^2(t',0)} > 0$ holds for any $t'<0$. Otherwise $u$ would become a non-radiative solution outside the light cone $\{(x,t): |x|>|t-t'|\}$, which implies that $u$ must be a ground state or zero and gives a contradiction. Please refer to Section 2 for the theory of exterior solutions, which are solutions defined only outside a suitable light cone, the theory of radiation fields and the conception/basic properties of non-radiative solutions. 

\paragraph{Soliton resolution} Given a time $t'\in (t_0,0)$, we may define a radial free wave $v_{t',L}$ in term of the radiation profile $G_-$, which is asymptotically equivalent to $u(x,t+t')$ outside the main light cone, i.e. 
\[
 \lim_{t\rightarrow \pm \infty} \int_{|x|>|t|} |\nabla_{t,x}(u(x,t+t')-v_{t',L}(x,t))|^2 {\rm d} x = 0. 
\]
Since the Strichartz norm of $v_{t',L}$ is dominated by its energy norm, which is in turn dominated by $\delta(t') \doteq \|G_-\|_{L^2(t',0)} \ll 1$, we obtain the soliton resolution 
\[
 u(x,t'+t) = \sum_{j=1}^n \zeta_j W_{\lambda_j(t')}(x)+ v_{t',L}(x,t) + w(t';x,t), \qquad |x|>|t|; 
\]
with the estimates (see Proposition \ref{main tool})
\begin{align*}
 &\frac{\lambda_{j+1}(t')}{\lambda_j(t')} \lesssim_j \delta(t')^2;& &\|\vec{w}(t',\cdot,0)\|_{\dot{H}^1\times L^2} \lesssim_n \delta(t'); 
\end{align*}
which can be further refined by substituting the right hand side $\delta(t')$ by (and substituting $\delta(t')^2$ by $\tau(t')^2$)
\[
 \tau(t') = \sup_{\lambda\leq \lambda_1(t')} \left\|\chi_0 W_{\lambda}^4 v_{t',L}\right\|_{L^1 L^2(\Rm \times \Rm^3)} + \delta(t')^5 \lesssim_1 \delta(t'). 
\]
Here $\chi_0$ is the characteristic function of $\{(x,t): |x|>|t|\}$. This significantly improves the soliton resolution estimates unless strong concentration of $G_-$ happens near $t'$. By the support of $u$, we may also prove that the first bubble size $\lambda_1(t')$ satisfies 
\[
 \lambda_1(t')\approx \left(\int_{t'}^0 G_-(s) {\rm d} s\right)^2 \lesssim |t'| \delta(t')^2. 
\]

\paragraph{Extraction of good times} For each small time $t' < 0$, we may prove that there exists a nonnegative integer $k$ such that 
\begin{equation} \label{L2 concentration intro}
 \|G_-\|_{L^2(t', t'+2^k \lambda_1(t'))} \gtrsim 2^k \delta(t')^2. 
\end{equation}
Otherwise the inequality $\tau(t') \ll \delta(t')^2$ would hold and the soliton resolution given above would be ``too good'' so that the following almost orthogonality holds
\[
 E(u) \approx n E(W,0) + E(v_{t',L}) > n E(W,0), 
\]
which gives a contradiction. A direct corollary follows that we can always find a sequence of time converging to zero, such that the size of the first bubble is relatively large $\lambda_1 (t') \geq \delta(t')^{2+\kappa}$. Here $\kappa > 0$ is a small constant. Indeed, if $\lambda_1(t')$ is always very small, then a combination of this assumption with the lower bound \eqref{L2 concentration intro} leads to a too strong concentration of $G_-$ at the right hand of any given small time, which makes $\|G_-\|_{L^2(t',0)}$ vanish before $t'$ reaches zero and gives a contradiction. Finally we observe that $\tau(t')$ can also be dominated by the classical maximal function $(\mathbf{M} G_-)(t')$. It follows from this observation that
\begin{itemize}
 \item By adjusting the value of $t'$ slightly, we may find a sequence of time $t_\ast\rightarrow 0^-$ such that 
 \begin{align*}
 &\lambda(t_\ast) \gtrsim \delta(t_\ast)^{2+\kappa}; & & \tau(t_\ast) \lesssim \delta(t_\ast)^{2-\kappa}. 
\end{align*}
\item The soliton resolution is ``quite precise'' for most time $t$ in the interval $(t_\ast,0)$, although it might be imprecise occasionally. 
\end{itemize}

\paragraph{Inserting the virial identity} In the final step we insert 
 \[
 \vec{u}(\cdot,t) = \sum_{j=1}^n \zeta_j \left(W_{\lambda_j(t)},0\right)+ \vec{v}_{t,L}(\cdot,0) + \vec{w}(t;\cdot,0), \qquad t\in [t_\ast,0). 
\]
into the virial identity 
\[
 \int_{0}^\infty u_t(r,t_\ast) \left(r u_r(r,t_\ast) + \frac{1}{2} u(r,t_\ast)\right) r^2 {\rm d} r =  \int_{t_\ast}^0 \int_0^\infty |u_t|^2 r^2 {\rm d} r {\rm d} t,
\]
and finish the proof by a contradiction. Please note that the major terms in the identity above come from either the radiation part $v_{t,L}$, which can be written explicitly in term of the radiation profile $G_-$, or the ground state with the largest size (first bubble), whose size can also be approximated quite precisely by the radiation profile. The term $w(t;\cdot,0)$ is a small error term with 
\[
 \|\vec{w}(t;\cdot,0)\|_{\dot{H}^1\times L^2} \lesssim_n \tau(t),
\]
where the upper bound $\tau(t)$ is quite small for $t=t_\ast$ and most time $t \in [t_\ast,0)$. 

\subsection{Structure of this work} 
This work is organized as follows: In Section 2 we first introduce some notations, basic conceptions and preliminary results, including the exterior solutions, radiation fields and theory, as well as a few soliton resolution estimates in term of the radiation part. We then prove a few refined soliton resolution estimates and give a few applications of these estimates in Section 3. After these preparation work, we devote the remaining three sections to the proof of our main theorem. We give an approximation of the size of the first bubble in Section 4, show the existence of ``good times'', i.e. times with large-size first bubble and good soliton resolution, in Section 5, and finally apply the virial identity to finish the proof in Section 6. 

\section{Preliminary results} 

In this section we make a brief review on previously known related theories and results. We start by introducing a few notations.  

\paragraph{Notations} In this article the notation $A \lesssim B$ means that there exists a constant such that the inequality $A \leq c B$ holds. In addition, we use subscript(s) to indicate that the constant depends on the subscript(s) but nothing else. In particular, the notation $\lesssim_1$ implies that the constant $c$ is an absolute constant. The notations $\gtrsim$ and $\simeq$ can be understood in the same manner. 

\paragraph{Space norms} Sometimes it is necessary to consider the restriction of radial $\dot{H}^1$ functions outside a ball, especially when we consider exterior solutions. We let $\mathcal{H}(R)$ be the space of the restrictions of radial functions $(u_0,u_1) \in \dot{H}^1\times L^2$ to the region $\{x: |x|>R\}$, whose norm is defined by 
\[
 \|(u_0,u_1)\|_{\mathcal{H}(R)} = \left(\int_{|x|>R} \left(|\nabla u_0(x)|^2 + |u_1(x)|^2\right){\rm d} x\right)^{1/2}.
\]
In particular, we define $\mathcal{H} = \mathcal{H}(0)$ to be the Hilbert space of radial $\dot{H}^1\times L^2$ functions. 

\paragraph{Nonlinearity} We use the notation $F(u) = |u|^4 u$ throughout this work, unless specified otherwise. 

\subsection{Exterior solutions}

In order to avoid the irrelevant blow up of solutions near the origin, which is believed to be unpredictable in some sense, it is helpful to introduce solutions to (CP1) defined outside a suitable light cone. We first introduce a few notations. Given $R\geq 0$, we define 
\[
 \Omega_R = \{(x,t)\in \Rm^3 \times \Rm: |x|>|t|+R\}
\]
and use the notation $\chi_R$ for the characteristic function of $\Omega_R$. Similarly we let 
\[
 \Omega_{R_1,R_2} = \{(x,t)\in \Rm^3 \times \Rm: |t|+R_1<|x|<|t|+R_2\}
\]
be the channel-like region and let $\chi_{R_1,R_2}$ be the corresponding characteristic function. We also define $Y$ norm to be the regular $L^5 L^{10}$ Strichartz norm. More precisely, given a time interval $J$, we define
\[
 \|u\|_{Y(J)} = \|u\|_{L^5 L^{10}(J \times \Rm^3)} = \left(\int_J \left(\int_{\Rm^3} |u(x,t)|^{10} {\rm d} x\right)^{1/2} {\rm d} t\right)^{1/5}. 
\]
This $Y$ norm will be frequently used in a combination with the characteristic function defined above. For example, we have 
\[
 \|\chi_R u\|_{Y(J)} = \left(\int_J \left(\int_{|x|>|t|+R} |u(x,t)|^{10} {\rm d} x\right)^{1/2} {\rm d} t\right)^{1/5}.
\]

\paragraph{Exterior solutions} Let $u, F$ be functions defined in the region 
\[
 \Omega =  \{(x,t): |x|>|t|+R,\, t\in (-T_1,T_2)\}\subseteq \Omega_R, \qquad T_1,T_2 \in \Rm^+\cup\{+\infty\}. 
\]
We call $u$ an exterior solution to the following linear wave equation 
\[
  \left\{\begin{array}{l} \partial_t^2 u - \Delta u = F(x,t), \qquad (x,t)\in \Omega;\\ (u,u_t)|_{t=0} = (u_0,u_1) \in \mathcal{H}; \end{array}\right.
\]
if and only if the inequalities $\|\chi_R u\|_{Y(J)} < +\infty$ and $\|\chi_R F\|_{L^1 L^2(J \times \Rm^3)} < +\infty$ holds for any bounded closed time interval $J \subset (-T_1,T_2)$ such that 
 \begin{equation} \label{def of exterior sol}
  u = \mathbf{S}_L (u_0,u_1) + \int_0^t \frac{\sin (t-t')\sqrt{-\Delta}}{\sqrt{-\Delta}} [\chi_R(\cdot,t') F(\cdot,t')] {\rm d} t', \qquad (x,t) \in \Omega
 \end{equation}
Here $\mathcal{S}_L(u_0,u_1)$ is the free wave with initial data $(u_0,u_1)$. Please note that the product $\chi_R F$ is always zero outside the region $\Omega_R$. Although the initial data $(u_0,u_1)$ are defined in the whole space above, finite speed of wave propagation implies the values in the ball $\{x: |x|\leq R\}$ are actually irrelevant. In other words, it is sufficient to specify the initial data $(u_0,u_1)\in \mathcal{H}(R)$. We may define an exterior solution $u$ to nonlinear wave equations in the same manner. For example, $u$ is a solution to
 \[
  \left\{\begin{array}{l} \partial_t^2 u - \Delta u = F(u), \qquad (x,t)\in \Omega;\\ (u,u_t)|_{t=0} = (u_0,u_1) \end{array}\right.
 \]
if and only if the inequality $\|\chi_R u\|_{Y(J)} < +\infty$ holds for any bounded closed time interval $J \subset (-T_1,T_2)$, which also implies that $\|\chi_R F(u)\|_{L^1L^2(J\times \Rm^3)} < +\infty$, and the identity \eqref{def of exterior sol} holds with $F(x,t) = F(u(x,t))$.  More details about exterior solutions can be found in Duyckaerts-Kenig-Merle \cite{oddtool}. 
 
\paragraph{Local theory} The local well-posedness and the continuous dependence of exterior solutions on the initial data (perturbation theory) immediately follows from a combination of the Strichartz estimates (see \cite{strichartz} for instance)
\[
 \sup_{t} \|\vec{u}(t)\|_{\mathcal{H}} + \|u\|_{L^5 L^{10}} \lesssim_1 \|\vec{u}(0)\|_{\mathcal{H}} + \|(\partial_t^2 -\Delta) u\|_{L^1 L^2}
\]
and a standard fixed-point argument. This argument is similar to those in the whole space $\Rm^3$ and somewhat standard nowadays. Please refer to  \cite{loc1, ls} for local well-posedness and \cite{kenig, shen2} for perturbation theory, as a few examples.  

\subsection{Radiation fields}

The theory of radiation fields is quite useful in the discussion of wave equations, especially the asymptotic behaviour of solutions. It has a history of more than half a century. Please see, Friedlander \cite{radiation1, radiation2} for instance. The following version of statement comes from Duyckaerts-Kenig-Merle \cite{dkm3}.

\begin{theorem}[Radiation field] \label{radiation}
Assume that $d\geq 3$ and let $u$ be a solution to the free wave equation $\partial_t^2 u - \Delta u = 0$ with initial data $(u_0,u_1) \in \dot{H}^1 \times L^2(\Rm^d)$. Then
\[
 \lim_{t\rightarrow \pm \infty} \int_{\Rm^d} \left(|\nabla u(x,t)|^2 - |u_r(x,t)|^2 + \frac{|u(x,t)|^2}{|x|^2}\right) {\rm d}x = 0
\]
 and there exist two functions $G_\pm \in L^2(\Rm \times \mathbb{S}^{d-1})$ such that
\begin{align*}
 \lim_{t\rightarrow \pm\infty} \int_0^\infty \int_{\mathbb{S}^{d-1}} \left|r^{\frac{d-1}{2}} \partial_t u(r\theta, t) - G_\pm (r\mp t, \theta)\right|^2 {\rm d}\theta {\rm d}r &= 0;\\
 \lim_{t\rightarrow \pm\infty} \int_0^\infty \int_{\mathbb{S}^{d-1}} \left|r^{\frac{d-1}{2}} \partial_r u(r\theta, t) \pm G_\pm (r\mp t, \theta)\right|^2 {\rm d}\theta {\rm d} r & = 0.
\end{align*}
In addition, the maps $(u_0,u_1) \rightarrow \sqrt{2} G_\pm$ are bijective isometries from $\dot{H}^1 \times L^2(\Rm^d)$ to $L^2 (\Rm \times \mathbb{S}^{d-1})$. 
\end{theorem}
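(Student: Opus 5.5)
The plan is to reduce first to smooth, compactly supported data by density, and then pass to the limit using the energy bound. For data $(u_0,u_1)\in C_c^\infty\times C_c^\infty(\Rm^d)$, Kirchhoff's formula (equivalently, the Fourier representation in polar coordinates) gives the Friedlander asymptotics directly. Writing $x=r\theta$, one expects
\[
 r^{\frac{d-1}{2}} u(r\theta,t) \approx F_\pm(r\mp t,\theta), \qquad t\to\pm\infty,
\]
where $F_\pm$ is given explicitly by a Radon-transform-type expression. For instance,
\[
 F_+(s,\theta) = c_d\,\partial_s^{\frac{d-3}{2}}\bigl(\mathcal{R}u_1 - \partial_s \mathcal{R}u_0\bigr)(s,\theta),
\]
with the fractional derivative defined through the Fourier transform in $s$ when $d$ is even. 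I then set $G_\pm=\partial_s F_\pm$, with appropriate signs.

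The convergence statements for smooth data follow from the explicit form. One uses stationary phase, or differentiates the Kirchhoff/Radon representation: $r^{\frac{d-1}{2}}\partial_t u(r\theta,t) - G_+(r-t,\theta)$ is $O(1/t)$ in $L^2$. This holds because the error terms carry an extra factor of $1/r$ on the region $r\approx |t|$, and the solution is concentrated there by finite speed of propagation together with the sharp Huygens principle in odd dimensions, or rapid decay inside the cone in even dimensions. The same argument shows that the angular derivative and the $u/|x|$ term are $O(1/t)$ in $L^2$, which gives the first limit. The identity $\partial_r \approx \mp\partial_t$ along outgoing rays gives the $\partial_r$ statement.

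For the isometry, combine energy conservation with the asymptotics. As $t\to+\infty$, the energy $\tfrac12\int(|\nabla u|^2+|u_t|^2)$ splits into three parts: the angular part and the $u/|x|$ part, which vanish in the limit, and the part $\tfrac12\int(|u_r|^2+|u_t|^2)r^{d-1}\,{\rm d}r\,{\rm d}\theta$, which tends to $\|G_+\|_{L^2}^2$. Hence $\|(u_0,u_1)\|^2_{\dot H^1\times L^2}=2\|G_+\|^2_{L^2}$. Since the map from data to $G_\pm$ is linear and isometric on a dense set, it extends isometrically to all of $\dot H^1\times L^2$.

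The limits also extend from the dense set. Approximate general data by smooth data, use the uniform-in-time energy bound for the difference of the two solutions, and use the isometry for the difference of the profiles; a $3\varepsilon$ argument finishes.

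The main obstacle is surjectivity. I would handle it by time reversal plus the explicit inverse. Given $G\in L^2(\Rm\times\mathbb{S}^{d-1})$ smooth and compactly supported, construct data whose profile is $G$ by inverting the Radon/Lax–Phillips translation representation. Concretely, the map $(u_0,u_1)\mapsto G_+$ is, up to constants, the Lax–Phillips outgoing translation representation, which is unitary onto $L^2$ by the Plancherel theorem for the Radon transform. I would verify this via the Fourier slice theorem: $\hat G_+(\sigma,\theta)$ is a constant multiple of $|\sigma|^{\frac{d-1}{2}}\bigl(\sigma\hat u_0(\sigma\theta) \pm i\,\hat u_1(\sigma\theta)\bigr)$, restricted appropriately. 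This shows the map is onto, since $\hat u_0$ and $\hat u_1$ can be recovered from the even and odd parts in $\sigma$.

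The delicate point is the even-dimensional case, where the fractional derivative and the lack of a sharp Huygens principle make the smooth-data asymptotics less explicit. There I would work entirely on the Fourier side, using the asymptotics of $\hat\sigma_{\mathbb{S}^{d-1}}$ (Bessel functions) to obtain the limits.
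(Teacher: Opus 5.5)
The paper contains no proof of this statement. It is quoted from Duyckaerts--Kenig--Merle \cite{dkm3} (the result goes back to Friedlander \cite{radiation1, radiation2}) and used as a black box, so there is no in-paper route to compare yours with.

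On its own terms, your outline is the standard Friedlander/Lax--Phillips argument and its structure is correct:
asymptotics on a dense class;
the energy identity together with the vanishing of the tangential and $u/|x|$ terms, giving $\|(u_0,u_1)\|_{\dot H^1\times L^2}^2=2\|G_+\|_{L^2}^2$;
extension by continuity;
surjectivity via the Fourier slice theorem;
and $G_-$ by time reversal.
In the density step for the first limit, say explicitly that Hardy's inequality bounds $\|u(t)/|x|\|_{L^2}$ by the energy uniformly in $t$; this is where $d\geq 3$ is used. Also note that the integrand is a nonnegative quadratic form, so its square root is a seminorm and the $3\varepsilon$ argument goes through.

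Three points need correcting.

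(i) ``Rapid decay inside the cone'' is false for $C_c^\infty$ data in even dimensions. For $d=4$ the forward fundamental solution is a nonzero multiple of $(t^2-|x|^2)^{-3/2}$ inside the cone. Hence $|u|\sim t^{-3/2}(t-|x|)^{-3/2}$ there when $\int u_1\neq 0$, and $G_+$ is not compactly supported. The energy in $\{|x|<t-A\}$ still tends to $0$ as $A\to\infty$ uniformly in $t$, which is enough. It is simpler, though, to use the dense class $\hat u_0,\hat u_1\in C_c^\infty(\Rm^d\setminus\{0\})$. There non-stationary phase gives $|u(x,t)|\lesssim_N (1+|t|)^{-\frac{d-1}{2}}(1+||x|-|t||)^{-N}$, and stationary phase on the sphere gives the profile. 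This works for every $d$, with no odd/even split and no appeal to Huygens' principle.

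(ii) The slice formula is not a constant multiple of $|\sigma|^{\frac{d-1}{2}}(\sigma\hat u_0(\sigma\theta)\pm i\hat u_1(\sigma\theta))$. In general there is a unimodular factor depending on $\mathrm{sgn}\,\sigma$. It is $\mathrm{sgn}\,\sigma$ itself for $d=3$; for even $d$ it encodes the branch of your half-integer derivative, which is fixed by the convergence statements, not by the isometry. This does not affect bijectivity. In fact, Plancherel plus the parallelogram identity over the pairs $(\sigma,\theta)$, $(-\sigma,-\theta)$ gives the isometry directly from this formula, so the energy argument becomes redundant. The parity that separates $\hat u_1$ from $|\xi|\hat u_0$ is parity under this antipodal map, not under $\sigma\mapsto-\sigma$ alone.

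(iii) Data reconstructed from a smooth compactly supported $G$ usually lie outside your dense class. This is harmless: the range of an isometry on a complete space is closed, and the slice formula, which extends by continuity, shows the range is dense.
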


In this article the functions $G_\pm$ are called the radiation profiles of the free wave $u$, or equivalently, of the corresponding initial data $(u_0,u_1)$. This is clear that $u$ is radial if and only if the radiation profile is independent of the angle $\theta$. We may also give explicit formula of free waves, as well as the radiation profile in the other time direction, in term of the radiation profile in one of the time directions. Please see \cite{newradiation, shenradiation} for example. In this work we only need to utilize a simple case, i.e. the radial case in the 3-dimensional space. Indeed we have 
\begin{align}
 & u(r,t) = \frac{1}{r} \int_{t-r}^{t+r} G_-(s) {\rm d}s; & & G_+(s) = - G_-(-s). \label{basic identity radiation free wave}
\end{align}
This implies that we may uniquely determine a radial free wave by specifying the values of both its radiation profiles $G_\pm \in L^2(\Rm^+)$ for $s\in \Rm^+$. A direct calculation shows that the initial data $(u_0,u_1)$ can be given by 
\begin{align} \label{initial data by radiation profile} 
 &u_0(r) =  \frac{1}{r} \int_{-r}^{r} G_-(s) {\rm d}s; & & u_1(r) = \frac{G_-(r)-G_-(-r)}{r}.
\end{align}
An integration by parts yields that 
\begin{equation} \label{radiation residue identity}
  \|(u_0,u_1)\|_{\mathcal{H}(R)}^2 = 8\pi \|G_-\|_{L^2(\{s: |s|>R\})}^2 + 4\pi R |u_0(R)|^2. 
\end{equation} 
We may also consider radiation fields and profiles for suitable solutions to inhomogeneous/nonlinear wave equations. 

\begin{lemma} [Radiation fields of inhomogeneous equation] \label{scatter profile of nonlinear solution}
 Assume that $R\geq 0$. Let $u$ be a radial exterior solution to the wave equation
 \[
  \left\{\begin{array}{ll} \partial_t^2 u - \Delta u = F(t,x); & (x,t)\in \Omega_R; \\
  (u,u_t)|_{t=0} = (u_0,u_1) \in \dot{H}^1\times L^2. & \end{array} \right.
 \]
 If $F$ satisfies $\|\chi_R F\|_{L^1 L^2(\Rm\times \Rm^3)}< +\infty$, then there exist unique radiation profiles $G_\pm \in L^2([R,+\infty))$ such that 
 \begin{align}
  \lim_{t\rightarrow +\infty} \int_{R+t}^\infty \left(\left|G_+(r-t) - r u_t (r, t)\right|^2 + \left|G_+(r-t) + r u_r (r, t)\right|^2\right) {\rm d}r & = 0; \label{positive ra}\\
  \lim_{t\rightarrow -\infty} \int_{R-t}^\infty \left(\left|G_-(r+t) - r u_t(r,t)\right|^2 +  \left|G_-(r+t) - r u_r(r,t)\right|^2\right) {\rm d} r & = 0. \label{negative ra}
 \end{align}
 In addition, the following estimates hold for $G_\pm$ given above and the corresponding radiation profiles $G_{0,\pm}$ of the initial data $(u_0,u_1)$:
 \begin{align*}
 4\sqrt{\pi} \|G_- - G_{0,-}\|_{L^2([R,R'])} & \leq \|\chi_{R,R'} F\|_{L^1 L^2((-\infty,0]\times \Rm^3)}, & & R'>R; \\
  4\sqrt{\pi}  \|G_+ - G_{0,+}\|_{L^2([R,R'])} & \leq \|\chi_{R,R'} F\|_{L^1 L^2([0,+\infty)\times \Rm^3)}, & & R'>R.
 \end{align*}
\end{lemma}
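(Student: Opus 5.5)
We prove Lemma \ref{scatter profile of nonlinear solution} by reducing it to the free-wave statement, Theorem \ref{radiation}, through the Duhamel formula \eqref{def of exterior sol}. Write $u = u_0^L + \sum$ with $u_0^L = \mathbf{S}_L(u_0,u_1)$ and
\[
 u(t) - u_0^L(t) = \int_0^t \frac{\sin (t-t')\sqrt{-\Delta}}{\sqrt{-\Delta}} [\chi_R F(t')] \,{\rm d}t'
\]
in $\Omega_R$. Since $\chi_R F \in L^1L^2(\Rm\times\Rm^3)$, the energy estimate lets us define
\[
 (v_0^\pm, v_1^\pm) = \mathbf{S}_L(-t')^{-1}\text{-pullback} = \int_0^{\pm\infty} \left(-\frac{\sin t'\sqrt{-\Delta}}{\sqrt{-\Delta}}\chi_R F(t'),\ \cos t'\sqrt{-\Delta}\,\chi_R F(t')\right){\rm d}t'.
\]
This integral converges absolutely in $\mathcal H$. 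The free wave $u_\pm = \mathbf{S}_L((u_0,u_1)+(v_0^\pm,v_1^\pm))$ then satisfies $\|\vec u(t) - \vec u_\pm(t)\|_{\mathcal H(R+|t|)} \le \|\chi_R F\|_{L^1L^2(\{\pm t'>\pm t\})}\to 0$ as $t\to\pm\infty$.

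Applying Theorem \ref{radiation} to $u_\pm$ gives radiation profiles, and we set $G_\pm$ to be their restrictions to $[R,\infty)$. After the change $r\mapsto r$ with $r>R+|t|$, the limits \eqref{positive ra} and \eqref{negative ra} follow: the exterior region $r>R+|t|$ corresponds exactly to the argument $r\mp t>R$. Uniqueness holds because two profiles satisfying the same limit differ by something whose $L^2([R,\infty))$ norm is a limit of zero.

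For the estimates, fix the negative direction; the positive one is symmetric. The difference $G_- - G_{0,-}$ is the negative radiation profile of the free wave $w$ with data $(v_0^-,v_1^-)$, restricted to $[R,R']$. Since everything is linear, it suffices to treat one slice $t'<0$. We look at the free wave $w_{t'}$ with $\vec w_{t'}(t') = (0,\chi_R F(t'))$, i.e. data at time $t'$ equal to $(0,f)$ with $f=\chi_R F(\cdot,t')$ radial. By \eqref{initial data by radiation profile} applied at time $t'$, its profile is $G(s) = \tfrac12 s f(|s|)$ up to a shift, namely the translation $s\mapsto s - t'$ coming from the time shift. From $u_1 = (G(r)-G(-r))/r$ with $u_0=0$ we get $G$ odd and $G(r)=r f(r)/2$.

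Hence the part of $G$ lying in $[R,R']$ after the shift comes only from $f$ on $\{R-t'<r<R'-t'\}$, i.e. from $|x|\in(R+|t'|,R'+|t'|)$, which is exactly where $\chi_{R,R'}(\cdot,t')$ is supported. Its $L^2$ norm is then
\[
 \tfrac12\left(\int r^2|f|^2\,{\rm d}r\right)^{1/2} = \frac{1}{2\sqrt{4\pi}}\|\chi_{R,R'}F(t')\|_{L^2} = \frac{1}{4\sqrt\pi}\|\chi_{R,R'}F(t')\|_{L^2}.
\]
Minkowski's inequality in $t'\in(-\infty,0]$ then gives the constant $4\sqrt\pi$. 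The main care needed is the bookkeeping of the time-shift and sign conventions in \eqref{basic identity radiation free wave}: one must check that a forcing at time $t'<0$ affects $G_-$ only in $s=r+t'$, and that the other half of the odd profile, $s<-R$, is irrelevant because we only measure on $[R,R']$.
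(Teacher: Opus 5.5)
Your proposal is correct and takes essentially the paper's route. The paper defers the proof to the author's earlier work but records the explicit formula $G_+(s)-G_{0,+}(s)=\frac12\int_0^\infty (s+t)F(s+t,t)\,{\rm d}t$, and that is exactly what your Duhamel computation produces (a slice with data $(0,f)$ at time $t'$ has profile $\frac12 (s-t')f(|s-t'|)$); the $4\sqrt{\pi}$ bounds then follow from Minkowski's inequality, as you describe.
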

The proof can be found in Section 2 (Lemma 2.5 and Remark 2.6) of the author's previous work \cite{dynamics3d}. The author would like to mention that we can actually give an explicit formula 
 \[
  G_+ (s) - G_{0,+} (s) = \frac{1}{2} \int_{0}^\infty (s+t) F(s+t,t) {\rm d} t. 
 \]
If $u$ is an exterior solution to (CP1) defined in $\Omega_R$ with $\|\chi_R u\|_{Y(\Rm)} < +\infty$, then Lemma \ref{scatter profile of nonlinear solution} also applies to this solution because our assumption guarantees $\|\chi_R F(u)\|_{L^1 L^2(\Rm \times \Rm^3)} < +\infty$. The corresponding radiation profiles $G_\pm \in L^2([R,+\infty))$ given in Lemma \ref{scatter profile of nonlinear solution} are called the (nonlinear) radiation profile of $u$. 

\subsection{Asymptotically equivalent solutions}

Assume that $u, v \in \mathcal{C}(\Rm; \dot{H}^1\times L^2)$ and $R\geq 0$. We say that $u$ and $v$ are $R$-weakly asymptotically equivalent if 
\[
 \lim_{t\rightarrow \pm \infty} \int_{|x|>R+|t|} |\nabla_{t,x} (u-v)|^2 {\rm d} x = 0.
\]
In particular, if $R=0$, then we say that they are asymptotically equivalent to each other. Because the integral above only involves the values of $u, v$ in the exterior region $\Omega_R$, the definition above applies to exterior solutions $u$ and $v$ as well.  

\paragraph{Radiation part} Given a radial exterior solution to (CP1) defined in $\Omega_0$, if $u$ is asymptotically equivalent to a free wave $v_L$, then we call $v_L$ the radiation part of $u$ (outside the light cone). In fact, an exterior solution $u$ is asymptotically equivalent to some free wave in $\Omega_0$ if and only if $\|\chi_0 u\|_{Y(\Rm)} < +\infty$. If $\|\chi_0 u\|_{Y(\Rm)}<+\infty$, then we may determine its (nonlinear) radiation profile $G_\pm \in L^2(\Rm^+)$ by Lemma \ref{scatter profile of nonlinear solution}, and then construct a free wave with the same radiation profiles for $s>0$, which is the desired asymptotically equivalent free wave. To see why the condition $\|\chi_0 u\|_{Y(\Rm)} < +\infty$ is also necessary, please refer to \cite{ecarbitrary}. Please note that this conception of radiation part is different from the radiation part $u_L$ in a soliton resolution at the blow-up time $T_+$, as described in the introduction section of this article. 

\paragraph{Non-radiative solutions} A solution $u$ to the free wave equation, or the nonlinear wave equation (CP1), or any other related wave equation is called ($R$-weakly) non-radiative if and only if it is ($R$-weakly) asymptotically equivalent to zero. Non-radiative solution is an important topic in the channel of energy method (see \cite{channeleven, tkm1, channel} for example), which plays an important role in the study of nonlinear wave equations in recent years. We consider two important examples. 

The first example is radial $R$-weakly non-radiative solution of the free wave equation. By the basic theory of radiation field, it is equivalent to saying that the corresponding radiation profiles satisfy $G_\pm (s) = 0$ for $s>R$, or $G_-(s)$ is supported in the interval $[-R,R]$. As a result, we may give the values of weakly non-radiative free wave $u$ in the region $\Omega_R$ explicitly in term of the radiation profile, thanks to \eqref{basic identity radiation free wave}
\[
 u(r,t) = \frac{1}{r} \int_{-R}^R G_-(s) {\rm d} s, \qquad r>|t|+R. 
\]
These non-radiative solutions form a one-dimensional linear space spanned by $1/r$. 
The second example is non-radiative solution to (CP1). In fact, all non-trivial radial non-radiative solutions to (CP1) are exactly the ground states $\pm W_\lambda(x)$, as mentioned in the introduction. When $r \gg \lambda$ is very large, the re-scaled ground state $W_\lambda$ satisfies 
\[
 W_\lambda(x) \approx \frac{\lambda^{1/2}}{r}.
\]
Namely, the asymptotic behaviour of $\pm W_\lambda$ is always similar to a suitable radial (weakly) non-radiative free wave. Indeed this asymptotic equivalence happens for any radial non-radiative solution to an energy critical wave equation, as shown in \cite{ecarbitrary}. 

\begin{remark}
 The standard ground state $W(x)$ in this article is a dilation of (thus slightly different from) the one $(1+|x|^2/3)^{-1/2}$ used in most related works. This choice helps us eliminate unnecessary constant in the calculation regarding radiation profiles. 
\end{remark}

\subsection{Soliton resolution of almost non-radiative solutions} 
The following proposition separates each bubble one-by-one as long as the radiation in the main light cone is sufficiently weak in the sense of Strichartz norms. This proposition proved in the author's previous work \cite{dynamics3d}, as well as the refined version proved in the next section, plays an essential role in the argument of this work. 
\begin{proposition} \label{main tool} 
 Let $n$ be a positive integer. Then there exists a small constant $\delta_0 = \delta_0(n)>0$ and an absolute constant $c_2 \gg 1$, such that if a radial exterior solution $u$ to (CP1) defined in $\Omega_0$ is asymptotically equivalent to a finite-energy free wave $v_L$ with $\delta \doteq \|\chi_0 v_L\|_{Y(\Rm)} < \delta_0$, then one of the following holds: 
 \begin{itemize}
  \item [(a)] There exists a sequence $(\zeta_j, \lambda_j)\in \{+1,-1\}\times \Rm^+$ for $j=1,2,\cdots,J$ with $0 \leq J\leq n-1$ such that  
  \begin{align*}
    \frac{\lambda_{j+1}}{\lambda_j}  \lesssim_j \delta^2, \qquad & j=1,2,\cdots, J-1; \\
   \left\|\vec{u}(\cdot, 0)-\sum_{j=1}^J \zeta_j (W_{\lambda_j},0) - \vec{v}_L(\cdot,0)\right\|_{\dot{H}^1\times L^2} & + \left\|\chi_0 \left(u - \sum_{j=1}^J \zeta_j W_{\lambda_j} \right)\right\|_{Y(\Rm)}  \lesssim_J \delta.
  \end{align*}
  \item[(b)] There exists a sequence $(\zeta_j, \lambda_j)\in \{+1,-1\}\times \Rm^+$ for $j=1,2,\cdots,n$ satisfying 
  \[
     \frac{|\alpha_{j+1}|}{|\alpha_j|} \lesssim_j \delta, \qquad j=1,2,\cdots, n-1;
  \]
  such that $u$ satisfies the following soliton resolution estimate in the exterior region  
  \begin{align*}
    \left\|\vec{u}(\cdot,0)-\sum_{j=1}^n (W^{\alpha_j},0) - \vec{v}_L(\cdot,0)\right\|_{\mathcal{H}(c_2 \lambda_n)} + \left\|\chi_{c_2 \lambda_n} \left(u - \sum_{j=1}^n W^{\alpha_j} \right)\right\|_{Y(\Rm)} & \lesssim_n \delta.
  \end{align*}
 \end{itemize}
 \end{proposition}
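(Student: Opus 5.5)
We will argue by an inductive ``peeling'' procedure from spatial infinity inward, extracting one bubble at a time. Throughout, $W^{\alpha}$ denotes the radial exterior non-radiative solution of (CP1) with $W^{\alpha}(r)\approx \alpha/r$ as $r\to\infty$, defined for $r>c_2\alpha^2$. Such a solution coincides with the restriction of $\pm W_\lambda$, $\lambda\simeq \alpha^2$, to the region where the latter is small. Its existence follows from a fixed-point argument around the weakly non-radiative free wave $\alpha/r$ in $\Omega_{c_2\alpha^2}$, where $\|\chi_{R}\, r^{-1}\|_{Y}\lesssim |\alpha| R^{-1/2}$ is small.

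\textbf{First step: the outermost region.} We compare $u$ with $v_L$ in $\Omega_R$ for $R$ large. Since $u$ is asymptotically equivalent to $v_L$, the radiation profiles of $u-v_L$ vanish for $s>0$. By Lemma \ref{scatter profile of nonlinear solution} and \eqref{radiation residue identity}, on $\Omega_R$ the difference $u-v_L$ is then, up to an error of size $\lesssim \|\chi_R F(u)\|_{L^1L^2}$, a weakly non-radiative free wave $\alpha(R)/r$. We let $R$ decrease as long as $\chi_R u$ stays small in $Y$. A continuity/bootstrap argument then shows that on this whole region
\[
u = W^{\alpha_1} + v_L + w_1, \qquad \|\vec w_1\|_{\mathcal H(R)}+\|\chi_R w_1\|_{Y}\lesssim \delta .
\]
To prove this, we write the equation for $w_1$:
\[
(\partial_t^2-\Delta)w_1 = F(W^{\alpha_1}+v_L+w_1)-F(W^{\alpha_1}).
\]
The linear terms in $v_L$ and $w_1$ are controlled by Strichartz together with smallness of $\chi_R W^{\alpha_1}$ in $Y$ (which follows from $R\gtrsim \alpha_1^2$). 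The pure $v_L$ terms are controlled by $\delta$. Uniqueness of the non-radiative part fixes $\alpha_1$, and we use \eqref{radiation residue identity} to show the residue coefficient changes by at most $O(\delta)$ as $R$ varies.

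\textbf{Dichotomy.} There are two possibilities at the end of the first step.
\begin{itemize}
\item If the bootstrap reaches $R=0$ with $|\alpha_1|$ comparable to the error, no bubble is present. This is case (a) with $J=0$.
\item Otherwise the bootstrap must stop near $r\simeq c_2\alpha_1^2$. There, the rigidity of non-radiative solutions (all radial ones are $\pm W_\lambda$) upgrades $W^{\alpha_1}$ to a genuine $\zeta_1 W_{\lambda_1}$ with $\lambda_1=\alpha_1^2$ (in our normalization). We then subtract it.
\end{itemize}

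\textbf{Inductive step.} We set $u_1 = u-\zeta_1 W_{\lambda_1}$, which solves
\[
(\partial_t^2-\Delta)u_1 = F(u_1+\zeta_1 W_{\lambda_1})-F(\zeta_1W_{\lambda_1}).
\]
This is a perturbation of the linearized equation around $W_{\lambda_1}$, which is harmless at scales $\ll \lambda_1$ since there $W_{\lambda_1}$ is essentially the constant $\lambda_1^{-1/2}$. We repeat the first step for $u_1$ in the inner region $r\lesssim \lambda_1$. The new residue $\alpha_2$ measures how far $u_1$ is from non-radiative at scale $\lambda_1$. The interaction $\|\chi_0 W_{\lambda_1}^4 v_L\|_{L^1L^2}$ together with the already-established error gives $|\alpha_2|\lesssim \delta |\alpha_1|$, hence $\lambda_2/\lambda_1\lesssim\delta^2$.

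\textbf{Termination.} We iterate. Each extracted bubble costs energy $\simeq E(W,0)$, and the constants depend only on the index $j$. After at most $n$ steps, either the remainder becomes globally small, giving case (a) with $J\le n-1$ bubbles, or we have extracted $n$ profiles $W^{\alpha_j}$ and we stop. In the latter case the estimate holds only outside radius $c_2\lambda_n$, which is case (b).

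\textbf{Main obstacle.} The hard part is the inductive step. We need uniform control of the error $w_j$ across the transition regions $\lambda_{j+1}\ll r\ll\lambda_j$, where two bubbles and $v_L$ interact. In particular, we must show that the errors do not accumulate beyond a $j$-dependent multiple of $\delta$, and that the residue $\alpha_{j+1}$ gains a factor $\delta$. I would handle this by performing the bootstrap in channel regions $\Omega_{R_1,R_2}$ and using the exact formula for $G_\pm-G_{0,\pm}$ in terms of $F$ to read off the residue.
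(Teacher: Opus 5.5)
\textbf{There is a genuine gap.} The architecture is right: peeling from outside in, using that $u-v_L$ has vanishing radiation profiles, the residue identity \eqref{radiation residue identity}, and channel regions. But the step that produces the $O(\delta)$ bounds, namely the choice of $(\zeta_j,\lambda_j)$ and control down to the inner radius, is missing, and the mechanism you propose would not work.

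You fix $\alpha_1$ by ``uniqueness of the non-radiative part'' and bootstrap inward while $\chi_R u$ stays small in $Y$. The residue $\alpha(R)=R\,(u-v_L)(R,0)$ is not a constant. By \eqref{initial data by radiation profile} and Lemma \ref{scatter profile of nonlinear solution},
$|\alpha(R')-\alpha(R)|\le (R'-R)^{1/2}\bigl(\|G_{0,+}\|_{L^2(R,R')}+\|G_{0,-}\|_{L^2(R,R')}\bigr)$,
and these $L^2$ norms are controlled only by the source in the channel. Consequences:
\begin{itemize}
\item $\lim_{r\to\infty} r(u-v_L)(r,0)$ need not exist.
\item An $\alpha_1$ fixed at a large radius $R'$ can leave an error as large as $\delta(R'/R)^{1/2}$ in the scale-invariant quantity $R^{1/2}|w_1(R,0)|$ at inner radii $R$.
\item For the same reason, ``the residue changes by at most $O(\delta)$'' and ``$|\alpha_1|$ comparable to the error'' are not scale-invariant statements.
\end{itemize}

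The paper resolves this with two concrete devices.
\begin{itemize}
\item Lemma \ref{lemma connection} bounds $\|\vec w(\cdot,0)\|_{\mathcal H(R_1)}+\|\chi_{R_1}w\|_{Y(\Rm)}$ by the pointwise quantity $R_1^{1/2}|w(R_1,0)|$ at the \emph{inner} radius plus source errors. This requires $\sup_{R_1\le r\le R_2} r^{1/2}|w(r,0)|\le\beta$.
\item The bubble is chosen by a threshold crossing: $R_1=\max\{r: r^{1/2}|w_j(r,0)|=\beta_1\}$, $\lambda_{j+1}=R_1/c_2$, $\zeta_{j+1}=\mathrm{sign}\,w_j(R_1,0)$. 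Here $\beta_1=c_2^{1/2}(1/3+c_2^2)^{-1/2}$ is exactly $r^{1/2}W_\lambda(r)$ at $r=c_2\lambda$. This makes $w_{j+1}(R_1,0)=0$, so the residue term in Lemma \ref{lemma connection} vanishes.
\end{itemize}
The (a)/(b) dichotomy is decided by whether $\sup_r r^{1/2}|w_j(r,0)|\le\beta$. Your stopping rule, smallness of $\chi_R u$ in $Y$, gives no control of this quantity and presupposes $\alpha_1$. Also, a profile with fixed $\alpha_1\neq 0$ cannot be continued perturbatively to $R=0$, so your no-bubble branch is not well posed.

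\textbf{Further steps are asserted rather than proved.}
\begin{itemize}
\item \emph{Crossing the bubble core.} Between consecutive bubbles you must cross $c\lambda_j\lesssim r\lesssim c_2\lambda_j$. There $W_{\lambda_j}$ has order-one Strichartz norm in channels of proportional width. Neither your bootstrap nor the remark that $W_{\lambda_1}$ is ``essentially constant at scales $\ll\lambda_1$'' covers this region. The paper crosses it with Lemma \ref{lemma connection 2}, iterated a bounded number of times over short channels, each step losing only an absolute constant.
\item \emph{Scale separation.} $|\alpha_{j+1}|\lesssim\delta|\alpha_j|$ does not follow from the size of the interaction $\chi_0W_{\lambda_j}^4v_L$. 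From $\|\vec w_j(\cdot,0)\|_{\mathcal H(c\lambda_j)}\lesssim\delta$ you only know $r^{1/2}|w_j(r,0)|\lesssim\delta$ at $r=c\lambda_j$. You must also show that further inside, $w_j$ keeps behaving like $\alpha/r$ until $r^{1/2}|w_j|$ reaches $\beta_1$, which happens only at $r\lesssim\delta^2\lambda_j$. This is the inward propagation given by Lemma \ref{lemma connection}, combined with a dyadic comparison of the radii where $r^{1/2}|w_j(r,0)|$ doubles, as in the proof of Proposition \ref{refined main tool}. You flag this as the ``main obstacle'', but it is the heart of the proof.
\item \emph{Rigidity upgrade.} This step is misplaced. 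Rigidity concerns exactly non-radiative solutions, whereas $u$ near $r\sim\lambda_1$ is only approximately so. The paper simply compares with the explicit $\zeta W_\lambda$ from the start.
\item \emph{Energy counting.} This is unnecessary for termination, since the induction stops at $n$ bubbles by construction.
\end{itemize}
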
 

\begin{remark} 
 The proposition here is slightly different from the original one given in \cite{dynamics3d} in two aspects 
 \begin{itemize}
  \item Proposition 4.1 in \cite{dynamics3d} applies to radial solutions of (CP1) defined in smaller exterior region $\Omega_{R}$, as long as it is still (weakly) asymptotically equivalent to a free wave with a small Strichartz norm. For the sake of simplicity we only use a weaker version here. Please see Remark 4.2 of \cite{dynamics3d} for more details. 
  \item The original proposition utilize a single parameter $\alpha$ to represent a ground state 
  \[
   W^\alpha = \frac{1}{\alpha} \left(\frac{1}{3} + \frac{|x|^2}{\alpha^4}\right)^{-1/2}, \qquad \alpha \in \Rm\setminus \{0\}. 
  \]
  We substitute this single parameter by two parameters $\zeta$ and $\lambda$. It is not difficult to see 
  \[
   W^\alpha = \zeta W_\lambda \quad \Longleftrightarrow \quad \alpha = \zeta \lambda^{1/2}.  
  \]
 \end{itemize}
\end{remark} 

\begin{remark} \label{Jbubble remark}
 We call a radial exterior solution $u$ defined in $\Omega_0$ a $J$-bubble solution if and only if $u$ is asymptotically equivalent to a free wave $v_L$ with $\|\chi_0 v_L\|_{Y(\Rm)} < \delta_0(J+1)$ and the soliton resolution given by Proposition \ref{main tool} comes with exactly $J$ bubbles. 
\end{remark}

The two lemmata below help us compare two asymptotically equivalent solutions. They are necessary in the proof of Proposition \ref{main tool}, as well as the refined soliton resolution estimates given in this work. Their proof depends on the theory of radiation fields. Please refer to Section 4 of \cite{dynamics3d}. 

\begin{lemma} \label{lemma connection} 
 There exist absolute positive constants $\varepsilon_1$, $\beta$, $\eta$ such that if $0\leq R_1<R_2$ and
 \begin{itemize} 
  \item $u$ is an exterior solution to (CP1) and $S$ is an exterior solution to the equation 
 \[
  (\partial_t^2 -\Delta) S= F(S) + e(x,t),
 \] 
 both in the region $\Omega_{R_1}$, with $\|\chi_{R_1} u\|_{Y(\Rm)}, \|\chi_{R_1} S\|_{Y(\Rm)}, \|\chi_{R_1} e(x,t)\|_{L^1 L^2(\Rm\times \Rm^3)} < +\infty$; 
 \item both $u$, $S$ are asymptotically equivalent to each other in $\Omega_{R_1}$; 
 \item $u$, $S$ and $w=u-S$ satisfy the following inequalities
 \begin{align*}
  \varepsilon \doteq \|\vec{w}(\cdot,0)\|_{\mathcal{H}(R_2)}  + \|\chi_{R_1,R_2} e(x,t)\|_{L^1 L^2(\Rm \times \Rm^3)} \qquad & \\
   + \|\chi_{R_2}\left(F(u) - F(S)- e(x,t)\right)\|_{L^1 L^2(\Rm \times \Rm^3)} & \leq \varepsilon_1; \\
   \|\chi_{R_1,R_2} S\|_{Y(\Rm)} & \leq \eta; \\
  \sup_{R_1\leq r\leq R_2} \left(r^{1/2} |w(r,0)|\right) & \leq \beta;
 \end{align*}
 \end{itemize}
 then we have 
 \begin{align*}
   \left\|\vec{w}(\cdot,0)\right\|_{\mathcal{H}(R_1)} + \|\chi_{R_1} w\|_{Y(\Rm)} & \lesssim_1 R_1^{1/2}|w(R_1,0)| +\varepsilon.
 \end{align*}
\end{lemma}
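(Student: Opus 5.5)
We prove Lemma \ref{lemma connection} by a continuity (bootstrap) argument in the radius. For $r\in[R_1,R_2]$ set
\[
 A(r) = \|\vec{w}(\cdot,0)\|_{\mathcal{H}(r)} + \|\chi_{r} w\|_{Y(\Rm)}.
\]
The aim is to show that $A(r)\lesssim_1 r^{1/2}|w(r,0)| + \varepsilon$ holds for every $r\in[R_1,R_2]$, starting from $r=R_2$ and decreasing $r$ down to $R_1$.

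\textbf{Step 1: the base radius $r=R_2$.} In $\Omega_{R_2}$ the difference $w$ solves
\[
 (\partial_t^2-\Delta)w = F(u)-F(S)-e.
\]
Its $L^1L^2$ norm on $\Omega_{R_2}$ is at most $\varepsilon$ by hypothesis. The Strichartz estimate, applied on the exterior region, then gives $\|\chi_{R_2}w\|_{Y}\lesssim \varepsilon$, so $A(R_2)\lesssim\varepsilon$.

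\textbf{Step 2: the equation on a channel.} Fix $r\in[R_1,R_2)$. In the channel $\Omega_{r,R_2}$ we write
\[
 F(u)-F(S)-e = O\big((|S|^4+|w|^4)|w|\big) - e .
\]
By H\"older,
\[
 \|\chi_{r,R_2}(F(u)-F(S))\|_{L^1L^2}\lesssim \big(\eta^4+\|\chi_r w\|_Y^4\big)\,\|\chi_r w\|_Y .
\]
The contribution of $e$ is at most $\varepsilon$, and in $\Omega_{R_2}$ the whole right-hand side is at most $\varepsilon$.

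\textbf{Step 3: radiation-field energy control.} Since $u$ and $S$ are asymptotically equivalent in $\Omega_{R_1}$, $w$ is non-radiative there. Lemma \ref{scatter profile of nonlinear solution} then says that the radiation profiles of the initial data $\vec w(\cdot,0)$ on $[r,\infty)$ are controlled by the $L^1L^2$ norm of the forcing on $\Omega_r$. Combining this with identity \eqref{radiation residue identity},
\[
 \|\vec w(\cdot,0)\|_{\mathcal{H}(r)}^2 = 8\pi\|G_-\|^2_{L^2(|s|>r)} + 4\pi r|w(r,0)|^2 ,
\]
where $G_-$ is the profile of the free evolution of the data, we get
\[
 \|\vec w(\cdot,0)\|_{\mathcal{H}(r)} \lesssim r^{1/2}|w(r,0)| + \varepsilon + (\eta^4+A(r)^4)A(r).
\]
The $Y$ norm on $\Omega_r$ is bounded by the same quantity, using Strichartz for the exterior solution. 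Choosing $\eta$ small absorbs $\eta^4 A(r)$ into the left-hand side, leaving
\[
 A(r)\le C\big(r^{1/2}|w(r,0)|+\varepsilon\big) + C A(r)^5 .
\]

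\textbf{Step 4: closing the bootstrap.} $A(r)$ depends continuously on $r$, since the norms are integrals over $\Omega_r$ and are finite by assumption. Because $r^{1/2}|w(r,0)|\le\beta$ and $\varepsilon\le\varepsilon_1$, the inequality from Step 3 forces $A(r)$ to avoid an interval $[2C(\beta+\varepsilon_1),\,c_0]$ when $\beta$ and $\varepsilon_1$ are small. At $r=R_2$ we have $A(R_2)$ small, so by continuity $A(r)$ stays on the small side for all $r\in[R_1,R_2]$. The quintic term is then absorbed, which gives the claimed bound at $r=R_1$.

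\textbf{Main obstacle.} The delicate point is Step 3. We need an exact energy identity for non-radiative exterior data with forcing, in which the only uncontrolled term is the boundary value $r^{1/2}w(r,0)$ coming from the $1/r$ mode. This requires carefully combining the formula for $G_\pm-G_{0,\pm}$ with the non-radiative condition $G_\pm=0$, and checking that all constants are absolute.
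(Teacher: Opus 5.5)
Your proof is correct, and it is essentially the intended argument. The paper does not reprove this lemma; it defers to Section 4 of \cite{dynamics3d} and notes that the proof rests on the theory of radiation fields, which is the machinery you use: Lemma \ref{scatter profile of nonlinear solution} with vanishing nonlinear profiles on $[r,\infty)$, the identity \eqref{radiation residue identity}, Strichartz in $\Omega_r$, and a continuity argument in $r$ driven by the uniform $\beta$-bound. The step you flag as the main obstacle is immediate: $G_\pm\equiv 0$ on $[r,\infty)$ gives $4\sqrt{\pi}\|G_{0,\pm}\|_{L^2(r,\infty)}\le \|\chi_r(F(u)-F(S)-e)\|_{L^1L^2}$, and the relation $G_{0,+}(s)=-G_{0,-}(-s)$ turns this into the bound on $\|G_{0,-}\|_{L^2(\{|s|>r\})}$ that \eqref{radiation residue identity} requires.
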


\begin{lemma} \label{lemma connection 2} 
 Let $\eta$ be the constant in Lemma \ref{lemma connection}. There exists an absolute positive constant $\varepsilon_2$ such that if $3 R_2/4 \leq R_1<R_2$ and
 \begin{itemize} 
  \item $u$ is an exterior solution to (CP1) and $S$ is an exterior solution to the equation 
 \[
  (\partial_t^2 -\Delta) S= F(S) + e(x,t). 
 \] 
 with $\|\chi_{R_1} u\|_{Y(\Rm)}, \|\chi_{R_1} S\|_{Y(\Rm)}, \|\chi_{R_1} e(x,t)\|_{L^1 L^2} < +\infty$. 
 \item Solutions $u$, $S$ are asymptotically equivalent to each other in $\Omega_{R_1}$. 
 \item $u$, $S$ and $w=u-S$ satisfy the following inequalities
 \begin{align*}
  \varepsilon \doteq \|\vec{w}(\cdot,0)\|_{\mathcal{H}(R_2)}  + \|\chi_{R_1,R_2} e(x,t)\|_{L^1 L^2(\Rm \times \Rm^3)} \qquad  & \\
 + \|\chi_{R_2}\left(F(u) - F(S)- e(x,t)\right)\|_{L^1 L^2(\Rm \times \Rm^3)} & \leq \varepsilon_2; \\
   \|\chi_{R_1,R_2} S\|_{Y(\Rm)} & \leq \eta; 
 \end{align*}
 \end{itemize}
 Then we have 
 \begin{align*}
  \|\chi_{R_1} w\|_{Y(\Rm)} + \left\|(w(\cdot,0), w_t(\cdot,0))\right\|_{\mathcal{H}(R_1)} \lesssim_1 \varepsilon. 
 \end{align*}
\end{lemma}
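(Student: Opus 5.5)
The plan is to treat $w=u-S$ as an exterior solution of a linear inhomogeneous wave equation in $\Omega_{R_1}$,
\[
 (\partial_t^2-\Delta) w = N(x,t) \doteq F(u)-F(S)-e(x,t),
\]
and to combine three ingredients: (i) the fact that $w$ is non-radiative in $\Omega_{R_1}$; (ii) the explicit description of radial free waves through their radiation profiles; (iii) a bootstrap in the radius. Throughout, $\eta$ is the absolute constant of Lemma \ref{lemma connection}, which I take small enough for the absorption below.

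\textbf{Step 1: radiation profiles.} Since $u$ and $S$ are asymptotically equivalent in $\Omega_{R_1}$, the nonlinear radiation profiles of $w$ in Lemma \ref{scatter profile of nonlinear solution} vanish on $[R_1,\infty)$. Let $G_{0,\pm}$ be the radiation profiles of the initial data $\vec w(\cdot,0)$. Then Lemma \ref{scatter profile of nonlinear solution} gives
\[
 \|G_{0,\pm}\|_{L^2([R_1,R_2])}\le \tfrac{1}{4\sqrt\pi}\,\|\chi_{R_1,R_2}N\|_{L^1L^2}.
\]
Using $G_{0,+}(s)=-G_{0,-}(-s)$, this controls $G_{0,-}$ on $\{R_1<|s|<R_2\}$.

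\textbf{Step 2: energy on the larger region.} By \eqref{radiation residue identity} at radius $R_2$, the part of $G_{0,-}$ on $\{|s|>R_2\}$ is bounded by $\|\vec w(\cdot,0)\|_{\mathcal H(R_2)}\le\varepsilon$. The same identity also gives $R_2^{1/2}|w(R_2,0)|\lesssim \varepsilon$.

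\textbf{Step 3: the boundary term.} By \eqref{initial data by radiation profile},
\[
 R_2w(R_2,0)-R_1 w(R_1,0)=\int_{R_1<|s|<R_2}G_{0,-}(s)\,{\rm d}s .
\]
Cauchy--Schwarz bounds the right-hand side by $\sqrt 2\,(R_2-R_1)^{1/2}\|G_{0,-}\|_{L^2(R_1<|s|<R_2)}$. Since $R_2-R_1\le R_1/3$ and $R_2/R_1\le 4/3$, dividing by $R_1^{1/2}$ yields
\[
 R_1^{1/2}|w(R_1,0)|\lesssim_1 \varepsilon+\|\chi_{R_1,R_2}N\|_{L^1L^2}.
\]
This is exactly where the hypothesis $R_1\ge 3R_2/4$ replaces the pointwise assumption involving $\beta$ in Lemma \ref{lemma connection}.

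\textbf{Step 4: energy on the smaller region.} Inserting Steps 1--3 into \eqref{radiation residue identity} at radius $R_1$ gives
\[
 \|\vec w(\cdot,0)\|_{\mathcal H(R_1)}\lesssim_1 \varepsilon+\|\chi_{R_1,R_2}N\|_{L^1L^2}.
\]

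\textbf{Step 5: nonlinear estimate.} I split $\chi_{R_1}N=\chi_{R_1,R_2}N+\chi_{R_2}N$. The second piece is at most $\varepsilon$ by hypothesis. In the channel, the pointwise bound $|F(S+w)-F(S)|\lesssim |w|(|S|^4+|w|^4)$ and Hölder give
\[
 \|\chi_{R_1,R_2}N\|_{L^1L^2}\le \varepsilon+C\|\chi_{R_1}w\|_Y\left(\eta^4+\|\chi_{R_1}w\|_Y^4\right).
\]

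\textbf{Step 6: closing the estimate.} Strichartz applied to the Duhamel formula \eqref{def of exterior sol}, where only $\chi_{R_1}N$ enters, gives $\|\chi_{R_1}w\|_Y\lesssim \|\vec w(\cdot,0)\|_{\mathcal H(R_1)}+\|\chi_{R_1}N\|_{L^1L^2}$. Setting $X(R_1)=\|\chi_{R_1}w\|_Y+\|\vec w(\cdot,0)\|_{\mathcal H(R_1)}$, Steps 4 and 5 combine to
\[
 X(R_1)\le C\varepsilon+C\eta^4X(R_1)+CX(R_1)^5 .
\]
With $\eta$ small, the middle term is absorbed into the left side.

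\textbf{Main obstacle: the quintic term.} The difficulty is $CX^5$, because $X(R_1)$ is only known to be finite, not small. I would handle it by a continuity argument in the radius. Every hypothesis remains valid when $R_1$ is replaced by any $R\in[R_1,R_2]$: the channel norms only decrease, and $3R_2/4\le R<R_2$ still holds. At $R=R_2$ we have $X(R_2)\lesssim\varepsilon$ directly, since only $\chi_{R_2}N$ enters and its norm is at most $\varepsilon$. The function $R\mapsto X(R)$ is continuous by dominated convergence, using that $\chi_{R_1}w\in Y$ and that the energy density of $\vec w(\cdot,0)$ is integrable. Choosing $\varepsilon_2$ small, the inequality $X\le 2C\varepsilon+2CX^5$ then forbids $X(R)$ from leaving the small branch $X\le 4C\varepsilon$. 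This yields $X(R_1)\lesssim_1\varepsilon$, which is the claim.
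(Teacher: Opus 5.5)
Your proof is correct and follows the radiation-field route the paper relies on. The paper defers the proof to Section 4 of \cite{dynamics3d}, but your ingredients are the ones it uses in the proof of Proposition \ref{refined main tool}: Lemma \ref{scatter profile of nonlinear solution} on the channel, Cauchy--Schwarz on $r\,w(r,0)$ across a short radial interval, Strichartz for the exterior Duhamel formula, and a continuity argument in the radius. One point deserves explicit mention: shrinking $\eta$ is legitimate because Lemma \ref{lemma connection} stays true for any smaller $\eta$, so you may assume $C\eta^4\le 1/2$ for your absolute constant $C$.
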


\section{Refinement of soliton resolution}

In this section we give a refinement of Proposition \ref{main tool}. This gives a better soliton resolution estimate unless the radiation part $v_L$ concentrates in a region around the origin $(0,0)$ with a size smaller or comparable to the size of the first bubble. In the subsequent sections we will show that this concentration is a rare situation in general. 

\subsection{Refined soliton resolution estimates}

The main result of this section is 

\begin{proposition} \label{refined main tool} 
 Given a positive integer $N$, let $u$ be a radial $N$-bubble solution to (CP1) with a small radiation part $v_L$. Suppose that Proposition \ref{main tool} gives the following soliton resolution
 \[
  u = \sum_{j=1}^N \zeta_j W_{\lambda_j}(x)+ v_L + w. 
 \]
 We define 
 \begin{align*}
  &\delta = \|\chi_0 v_L\|_{Y(\Rm)};& &\tau = \sup_{\lambda\leq \lambda_1} \left\|\chi_0 W_{\lambda}^4 v_L\right\|_{L^1 L^2} + \delta^5\lesssim_1 \delta. 
 \end{align*}
 If $\delta < \delta(N)$ is sufficiently small, then we must have 
 \begin{align*}
  &\frac{\lambda_{j+1}}{\lambda_j} \lesssim_j \tau^2, \; j=1,2,\cdots,N-1;& &\left\|\vec{w}(\cdot,0)\right\|_{\mathcal{H}} \lesssim_N \tau. 
 \end{align*}
\end{proposition}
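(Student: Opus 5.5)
We will rerun the inductive bubble-extraction argument behind Proposition \ref{main tool}, but compare $u$ with the refined ansatz
\[
 S = \sum_{j=1}^N \zeta_j W_{\lambda_j} + v_L,
\]
and measure the error through $\tau$ instead of $\delta$. The ansatz solves $(\partial_t^2-\Delta)S = F(S) + e$ with
\[
 e = \sum_j \zeta_j F(W_{\lambda_j}) - F(S).
\]
Expanding $F(S)$, the terms in $e$ are of three kinds. First come the terms linear in $v_L$, namely $5\,W_{\lambda_j}^4 v_L$. Second come the cross terms between bubbles of very different scales. Third come terms of order at least two in $v_L$, such as $W_{\lambda_j}^3 v_L^2$ and $|v_L|^4 v_L$.

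We estimate these on $\Omega_0$ in $L^1L^2$. The linear terms are bounded by $\tau$ directly, since every $\lambda_j\le \lambda_1$. The purely nonlinear term is bounded by $\delta^5\le\tau$. The mixed terms $W_{\lambda}^{4-k}|v_L|^{k+1}$ with $1\le k\le 3$ are handled by Hölder interpolation between $W_\lambda^4 v_L$ in $L^1L^2$ and $v_L$ in $Y$. This gives a bound of the form $\tau^{(4-k)/4}\delta^{\,\cdot}$. To control it by $\tau$, we use $\delta^5\le\tau$; if that proves insufficient, we will check that $\|\chi_0 W_\lambda^4 v_L\|\cdot\delta^{k}$ is $\lesssim\tau$ up to the smallness of $\delta$. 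The cross terms $W_{\lambda_i}^4 W_{\lambda_j}$ with $i<j$ are of size $(\lambda_j/\lambda_i)^{1/2}$ in the relevant exterior regions, which is exactly why the scale-separation bound is $\tau^2$ rather than $\tau$.

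The core argument is an induction on $j$ over the channels $\Omega_{c\lambda_{j+1},\,c\lambda_j}$, using Lemma \ref{lemma connection} and Lemma \ref{lemma connection 2}. Start far out, at radius $R\gg\lambda_1$. There $w=u-S$ is asymptotically equivalent to $0$, and the data error is controlled by the radiation-field identity \eqref{radiation residue identity}, so Lemma \ref{lemma connection 2} gives $\|\vec w\|_{\mathcal H(R)}\lesssim\tau$. We then move inward with Lemma \ref{lemma connection}. The only uncontrolled quantity is the boundary term $R^{1/2}|w(R,0)|$. Exactly as in \cite{dynamics3d}, this term is absorbed by adjusting the scale $\lambda_j$, and the sign $\zeta_j$ where needed, of the next bubble; the adjustment consists of matching the coefficient of the non-radiative $1/r$ tail. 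This yields $\|\vec w\|_{\mathcal H(c\lambda_{j+1})}\lesssim\tau$.

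The scale bound $\lambda_{j+1}/\lambda_j\lesssim\tau^2$ comes from the $1/r$ tail. At radius $r\sim\lambda_j$ the tail of bubble $j+1$ contributes $(\lambda_{j+1}/\lambda_j)^{1/2}$ to $r^{1/2}w$. On the other hand, $r^{1/2}|w(r,0)|$ is controlled by $\|\vec w\|_{\mathcal H(r)}\lesssim\tau$, using the radial Sobolev inequality in the form $r^{1/2}|f(r)|\lesssim\|f\|_{\dot H^1(|x|>r)}$. Together these give $(\lambda_{j+1}/\lambda_j)^{1/2}\lesssim\tau$. Since the parameters from Proposition \ref{main tool} are already close to the refined ones, within $O(\delta)$ in the natural sense, the refined scales can replace the original ones without changing $N$ or the signs. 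The induction ends inside $\lambda_N$ with $\|\vec w\|_{\mathcal H}\lesssim_N\tau$. Near the origin this uses the fact that $W_{\lambda_N}$ is the only non-radiative profile: at that scale the residual $1/r$ coefficient must vanish, since otherwise an extra bubble would appear, contradicting that $u$ is an $N$-bubble solution.

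The main obstacle is the error estimate for $e$ restricted to each channel $\chi_{c\lambda_{j+1},c\lambda_j}$. There we need $\|\chi\, W_{\lambda_j}^4 v_L\|$ and the mixed terms to be $\lesssim\tau$ uniformly in $j$, even though $\tau$ involves only scales $\lambda\le\lambda_1$. This holds because each $\lambda_j\le\lambda_1$. The delicate part is the cross terms at the transition radii. I would first try to bound them by $\tau^2$-type quantities inductively, using the already-proved separation $\lambda_{j}/\lambda_{j-1}\lesssim\tau^2$ from the previous step.
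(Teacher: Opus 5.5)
Your handling of $e$ is fine. The mixed terms are even bounded pointwise by $W_\lambda^4|v_L|+|v_L|^5$, so Hölder is not needed. Re-running the induction of Proposition~\ref{main tool} with $\|\chi_0 e_j\|_{L^1L^2}\lesssim_j\tau$ is also the paper's framework.

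The gap is the improved separation $\lambda_{j+1}/\lambda_j\lesssim\tau^2$: your argument assumes it rather than proves it. You want $\|\vec w_{j+1}(\cdot,0)\|_{\mathcal H(c_2\lambda_{j+1})}\lesssim\tau$ from Lemma~\ref{lemma connection}, comparing $u$ with $S_{j+1}=S_j+\zeta_{j+1}W_{\lambda_{j+1}}$ on $[c_2\lambda_{j+1},c\lambda_j]$. For that you need $\varepsilon\lesssim\tau$, but $\varepsilon$ contains two terms of the wrong size:
\begin{itemize}
\item $\|\vec w_{j+1}(\cdot,0)\|_{\mathcal H(c\lambda_j)}$, which is at least $\|W_{\lambda_{j+1}}\|_{\dot H^1(|x|>c\lambda_j)}-C\tau\simeq(\lambda_{j+1}/(c\lambda_j))^{1/2}-C\tau$;
\item the cross term $W_{\lambda_j}^4W_{\lambda_{j+1}}$ in $e_{j+1}$, of size comparable to $(\lambda_{j+1}/\lambda_j)^{1/2}$.
\end{itemize}
The previous inductive step says nothing about this newest pair, and a priori you only know $(\lambda_{j+1}/\lambda_j)^{1/2}\lesssim\delta$.

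Your tail argument does not break the circle. It is true that $r^{1/2}|w_j(r,0)|\lesssim\tau$ at $r=c\lambda_j$. But to read off $(\lambda_{j+1}/\lambda_j)^{1/2}\lesssim\tau$ from this, you must know that $w_{j+1}=w_j-\zeta_{j+1}W_{\lambda_{j+1}}$ is much smaller than the tail $(\lambda_{j+1}/(c\lambda_j))^{1/2}$ at that radius. That is again the unknown estimate. Information at a single radius cannot separate the bubble tail from $w_{j+1}$.

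The missing idea is to propagate the non-radiative $1/r$ structure of $w_j$ across the whole region from $c\lambda_j$ down to $c_2\lambda_{j+1}$, working only with $w_j$, which does not involve the unknown scale. The paper does this dyadically:
\begin{itemize}
\item Set $r_k=\max\{r: r^{1/2}|w_j(r,0)|=2^kC\tau\}$. Lemma~\ref{lemma connection} on $[r_k,c\lambda_j]$ gives $\|\vec w_j(\cdot,0)\|_{\mathcal H(r_k)}+\|\chi_{r_k}w_j\|_{Y(\Rm)}\lesssim_j 2^kC\tau$.
\item Since $w_j$ is non-radiative, Lemma~\ref{scatter profile of nonlinear solution} bounds the radiation profile of $\vec w_j(\cdot,0)$ on $[r_k,r_{k-1}]$ by $(C^{-1}2^{-k}+(2^kC\tau)^{4/5}+(r_{k-1}/\lambda_j)^{2/5})\,2^kC\tau$.
\item Via \eqref{initial data by radiation profile}, this forces $r\,w_j(r,0)$ to be nearly constant, i.e.\ $r_k/r_{k-1}\le\frac14(1+\epsilon_k)^2$ with $\sum_k\epsilon_k<\infty$.
\item Iterating until $2^K\simeq\beta_1/(C\tau)$ gives $c_2\lambda_{j+1}<r_K\lesssim 4^{-K}r_0\lesssim_j\tau^2\lambda_j$.
\end{itemize}
Only after this is $\|\chi_0 e_{j+1}\|_{L^1L^2}\lesssim\tau$, and then Lemma~\ref{lemma connection} on $[c_2\lambda_{j+1},c\lambda_j]$ gives the $\tau$-bound for $w_{j+1}$.

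Note also that no ``adjustment'' of scales is needed, or allowed. The statement concerns the $\lambda_j$ of Proposition~\ref{main tool}, which are fixed by $r^{1/2}|w_j(r,0)|=\beta_1=c_2^{1/2}(1/3+c_2^2)^{-1/2}$ at $r=c_2\lambda_{j+1}$. Hence $w_{j+1}(c_2\lambda_{j+1},0)=0$, and the boundary term in Lemma~\ref{lemma connection} vanishes identically. Replacing these scales by ones that agree only up to relative error $O(\delta)$ would change $w$ by $O(\delta)$ in $\mathcal H$ and lose the $O(\tau)$ bound.
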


\begin{remark}
 The same estimates in Proposition \ref{refined main tool} still holds under the same assumption if we substitute $\tau$ by 
 \[
  \tau = \max_{j=1,2,\cdots,n} \left\|\chi_0 W_{\lambda_j}^4 v_L\right\|_{L^1 L^2(\Rm \times \Rm^3)} + \|\chi_0 v_L\|_{Y(\Rm)}^5. 
 \]
 The proof is completely identical. 
\end{remark}

\begin{proof}[Proof of Proposition \ref{refined main tool}]
 We need to make a careful review of the proof for Proposition \ref{main tool} given in \cite{dynamics3d}. We first recall the following notations 
  \begin{align*}
  &S_n = \sum_{j=1}^n \zeta_j W_{\lambda_j} (x) + v_L, & & w_n = u - S_n. 
 \end{align*}
  The approximated solution $S_n$ satisfies the approximated equation 
  \begin{align*}
   &(\partial_t^2 - \Delta) S_n = F(S_n) + e_n;& & e_n = \sum_{j=1}^n \zeta_j F(W_{\lambda_j}) - F(S_n).
  \end{align*}
 The idea is to compare $u$ with each approximated solution $S_n$, starting from $S_0 = v_L$. If $u$ deviates from $S_n$ when $r$ decreases, we add another bubble $\zeta_{n+1} W_{\lambda_{n+1}}$ for compensation and show that $u$ is close to $S_{n+1} = S_n + \zeta_{n+1} W_{\lambda_{n+1}}$. The proof consists of the following steps:
 \begin{itemize}
  \item[(I)] In the first step we prove Proposition \ref{main tool} with $n=1$. We compare $u$ with $v_L$. It is clear that $\|\chi_0 e_0\|_{L^1 L^2} = \|\chi_0 v_L\|_{Y(\Rm)} = \delta^5$. If $w_0 = u-v_L$ satisfies 
  \[
   \sup \left(r^{1/2} |w_0(r,0)|\right) \leq \beta,
  \]
  where $\beta$ (and $\eta$ below) is the constant in Lemma \ref{lemma connection}, then we may apply Lemma \ref{lemma connection} to verify that case (a) of the conclusion with $J=0$ holds. On the other hand, if 
  \[
   \sup \left(r^{1/2} |w_0(r,0)|\right) > \beta, 
  \] 
  we may select a sufficiently large constant $c_2$ and $\beta_1 \doteq c_2^{1/2} (1/3 + c_2^2)^{-1/2} < \beta/2$, let 
  \begin{align}
   &R_1= \max \left\{r: r^{1/2} |w_0(r,0)| = \beta_1\right\}; & & \zeta_1 = \hbox{sign}( w_0(R_1,0)); & & \lambda_1 = c_2^{-1} R_1; \label{def of lambda 1} 
  \end{align}
  and apply Lemma \ref{lemma connection} on $u$ and the approximated solution $S_1 = \zeta_1 W_{\lambda_1} +v_L$ to deduce that case (b) of the conclusion holds for $n=1$. 
  \item[(II)] In the second step we assume that Proposition \ref{main tool} holds for a positive integer $n$ and show that it also holds for $n+1$. We first choose a small constant $c_1 = c_1(n)$ such that $\|\chi_{0,c} W\|_{Y(\Rm)} < \eta/3n$ is sufficiently small. It suffices to consider solutions satisfying (b) in the conclusion for $n$. An application of Lemma \ref{lemma connection 2} immediately gives
  \[
   \|\vec{w}_n(\cdot,0)\|_{\mathcal{H}(c_1\lambda_n)} + \|\chi_{c_1\lambda_n} w_n\|_{Y(\Rm)} + \|\chi_{c_1 \lambda_n}(F(u)-F(S_n)-e_n)\|_{L^1 L^2} \lesssim_n \delta. 
  \] 
  We also have $\|\chi_0 e_n\|_{L^1 L^2} \lesssim_n \delta$. There are still two cases. If 
  \[
   \sup r^{1/2} |w_n(r,0)| \leq \beta,
  \]
  we may apply Lemma \ref{lemma connection} on $u$ and $S_n$ to verify that part (a) in the conclusion holds for $n+1$ with $J=n$. On the other hand, if 
  \[
   \sup r^{1/2} |w_n(r,0)| > \beta,
  \] 
  we may choose
  \begin{align*}
   &R_1= \max \left\{r: r^{1/2} |w_n(r,0)| = \beta_1\right\}; &
    & \zeta_{n+1} = \hbox{sign}( w_n(R_1,0)); & & \lambda_{n+1} = c_2^{-1} R_1;
  \end{align*}
  and apply Lemma \ref{lemma connection} on $u$ and the approximated solution $S_{n+1} = S_n + \zeta_{n+1} W_{\lambda_{n+1}}$ between the radii $R_1$ and $R_2 = c \lambda_n$ to deduce that case (b) of the conclusion holds for $n+1$. 
 \end{itemize}
 Please note that the following scale separation estimate plays an essential role in this argument 
 \[
 \left\|\chi_0 (W_{\lambda})^k (W_{\lambda'})^{5-k}\right\|_{L^1 L^2(\Rm\times \Rm^3)} \lesssim_1 \min\left\{\left(\frac{\lambda}{\lambda'}\right)^{1/2}, \left(\frac{\lambda'}{\lambda}\right)^{1/2}\right\}, \qquad k=1,2,3,4. 
 \]
 The proof of Proposition \ref{refined main tool} follows the same procedure but relies on finer upper bound estimates. We divide the proof into several steps ($1\leq n\leq J-1$): 
 \begin{align*}
  &\|\vec{w_1}(\cdot,0)\|_{\mathcal{H}(c_2 \lambda_1)} + \|\chi_{c_2 \lambda_1} w_1\|_{Y(\Rm)} \lesssim_1 \tau; & &(P_0)\\
  &\left\{\begin{array}{l} \lambda_{n+1}/\lambda_{n} \lesssim_n \tau^2; \\ \|\vec{w}_{n+1} (\cdot,0)\|_{\mathcal{H}(c_2 \lambda_{n+1})} + \|\chi_{c_2 \lambda_{n+1}} w_{n+1}\|_{Y(\Rm)} \lesssim_n \tau; \\ \end{array}\right. & & (P_n)\\
  &\|\vec{w}_J (\cdot,0)\|_{\mathcal{H}} + \|\chi_{0} w_J\|_{Y(\Rm)} \lesssim_J \tau.& &(P_J)
  \end{align*}
 First of all, we apply Lemma \ref{lemma connection} on $u$ and the approximated solution $S_1$ with $R_1 = c_2 \lambda_1$ and $R_2\rightarrow +\infty$ to deduce that 
 \begin{align*}
  \left\|\vec{w}_1(\cdot,0)\right\|_{\mathcal{H}(c_2 \lambda_1)} + \|\chi_{c_2 \lambda_1}w_1\|_{Y(\Rm)} \lesssim_1 \|\chi_{c_2 \lambda_1} e_1\|_{L^1 L^2} \lesssim_1 \tau. 
 \end{align*}
 Here we use the estimate
 \begin{align*}
  \|\chi_0 e_1\|_{L^1 L^2} \lesssim_1 \|\chi_0 W_{\lambda_1}^4 v_L\|_{L^1 L^2} + \|\chi_0 v_L^5\|_{L^1 L^2} \lesssim_1 \tau. 
 \end{align*}
 This verifies $(P_0)$. Next we assume $(P_0), (P_1), \cdots, (P_{n-1})$ and prove $(P_n)$. By the scale separation and the definition of $\tau$ we have 
 \begin{align*}
   \|\chi_0 e_n\|_{L^1 L^2} &\lesssim_n \sum_{1\leq j<k\leq n} \left(\|\chi_0 W_{\lambda_{j}}^4 W_{\lambda_k}\|_{L^1 L^2} + \|\chi_0 W_{\lambda_{j}} W_{\lambda_k}^4\|_{L^1 L^2}\right) + \sum_{j=1}^n \|\chi_0 W_{\lambda_j}^4 v_L\|_{L^1 L^2} + \delta^5\\
   &\lesssim_n  \sum_{1\leq j<k\leq n} \left(\frac{\lambda_k}{\lambda_j}\right)^{1/2} + \tau\\
   &\lesssim_n \tau. 
 \end{align*} 
 It immediately follows that 
 \begin{align*}
  \|\chi_{c_2 \lambda_n} (F(u)-F(S_n) -e_n)\|_{L^1 L^2} & \lesssim_1 \left(\|\chi_{c_2 \lambda_n} w_n\|_{Y(\Rm)}^4 + \|\chi_0 S_n\|_{Y(\Rm)}^4 \right)\|
  \chi_{c_2 \lambda_n} w_n \|_{Y(\Rm)} \\
  & \qquad + \|\chi_0 e_n\|_{L^1 L^2}\\
  & \lesssim_n \tau. 
 \end{align*}
 Given any small constant $c < c_2$, which will be determined later, when $\delta < \delta(n,c)$ is sufficiently small, we may apply Lemma \ref{lemma connection 2} (for multiple times) to deduce 
 \begin{align*}
  \|w_n(\cdot,0)\|_{\mathcal{H}(c \lambda_n)} + \|\chi_{c\lambda_n} w_n \|_{Y(\Rm)} + \|\chi_{c\lambda_n}(F(u)-F(S_n)-e_n)\|_{L^1 L^2} \lesssim_{n,c} \tau. 
 \end{align*}
 Without loss of generality, we may assume that $c$ is a sufficiently small number such that 
 \[
   \|\chi_{0,c} W\|_{Y(\Rm)} \leq \frac{\eta}{3n}.
 \]
 Now we assume that $C > C(n,c)$ is a large constant to be determined later, where the lower bound $C(n,c)$ guarantees that
 \begin{equation} \label{estimate cC}
  \|\vec{w}_n (\cdot,0)\|_{\mathcal{H}(c\lambda_n)}  + \|\chi_{c\lambda_n} w_n\|_{Y(\Rm)}  + \|\chi_{c\lambda_n} (F(u)-F(S_n)-e_n)\|_{L^1 L^2} \leq C \tau, 
 \end{equation}
 which implies (according to \eqref{radiation residue identity})
 \[
  \sup_{r\geq c \lambda_n} \left(r^{1/2} |w_n (r,0)|\right) < C \tau;
 \]
 and define a sequence $c\lambda_n > r_0 > r_1 > r_2 > \cdots$ by 
 \[
  r_k = \max\{r>0: r^{1/2} |w_n (r,0)| = 2^k C \tau\}, \qquad k=0,1,2, \cdots, \left\lfloor\log_2 \frac{\beta_1}{C \tau}\right\rfloor.
 \]
 When $\delta<\delta(n,c,C)$ is sufficiently small, we may apply Lemma \ref{lemma connection} between radii $r_k$ and $c\lambda_n$ to deduce
 \[
  \|\vec{w}_n(\cdot,0)\|_{\mathcal{H}(r_k)} + \|\chi_{r_k} w_n\|\lesssim_{n} 2^k C\tau, \qquad k\geq 0. 
 \]
 We apply Lemma \ref{scatter profile of nonlinear solution} on $w_n$ and deduce 
 \begin{align*}
  \|G_\pm\|_{L^2(r_{k},r_{k-1})} & \lesssim_1 \left\|\chi_{r_{k}, r_{k-1}} \left(F(u) - F(S_n) - e_n\right)\right\|_{L^1 L^2} \\
  & \lesssim_n \tau + \left\|\chi_{r_{k}, r_{k-1}} \left(F(w_n+S_n) - F(S_n)\right)\right\|_{L^1 L^2}\\
  & \lesssim_n \tau + \left(\|\chi_{r_{k}, r_{k-1}} w_n\|_{Y(\Rm)}^4 + \|\chi_{r_{k}, r_{k-1}} S_n\|_{Y(\Rm)}^4\right) \|\chi_{r_{k}, r_{k-1}} w_n \|_{Y(\Rm)}\\
  & \lesssim_n \tau + \left((2^k C \tau)^4 + \left(\frac{r_{k-1}}{\lambda_n}\right)^{2/5} + \delta^4\right) 2^k C \tau \\
  & \lesssim_n \left(C^{-1} 2^{-k} + (2^k C \tau)^{4/5} + \left(\frac{r_{k-1}}{\lambda_n}\right)^{2/5}\right) 2^k C \tau. 
 \end{align*}
 Here $G_\pm$ is the corresponding radiation profile of $\vec{w}_n (0)$. By the definition of $r_k$, the explicit formula \eqref{initial data by radiation profile}, and Cauchy-Schwarz, we have 
 \begin{align*}
  \left|r_k^{1/2} 2^k C \tau - r_{k-1}^{1/2} 2^{k-1} C\tau\right| & = \left|r_k |w_n (r_k,0)| - r_{k-1} |w_n (r_{k-1},0)|\right|\\
  & \leq  \left|r_k w_n (r_k,0) - r_{k-1} w_n (r_{k-1},0)\right|\\
  & \leq \int_{r_k}^{r_{k-1}} \left(|G_+(s)|+|G_-(s)|\right) {\rm d} s\\
  & \lesssim_n \left(C^{-1} 2^{-k} + (2^k C \tau)^{4/5} + \left(\frac{r_{k-1}}{\lambda_n}\right)^{2/5}\right) 2^k C \tau (r_{k-1}-r_k)^{1/2}. 
 \end{align*}
 Thus 
 \begin{align*}
  \left|\frac{2r_{k}^{1/2}}{r_{k-1}^{1/2}} - 1\right|  \lesssim_n \left(C^{-1} 2^{-k} + (2^k C \tau)^{4/5} + \left(\frac{r_{k-1}}{\lambda_n}\right)^{2/5}\right) \left(1-\frac{r_k}{r_{k-1}}\right)^{1/2}.
 \end{align*}
 Therefore there exists a constant $c_n$ (depending on $n$ only) such that 
 \begin{align} \label{ratio estimate rk}
   \frac{2r_{k}^{1/2}}{r_{k-1}^{1/2}} \leq 1 + c_n\left(C^{-1} 2^{-k} + (2^k C \tau)^{4/5} + \left(\frac{r_{k-1}}{\lambda_n}\right)^{2/5}\right).
 \end{align}
 Now we choose $c=c(n)$ and $C=C(n,c)=C(n)$ be sufficiently small/large constants, as well as an additional large constant $N=N(n) \in \mathbb{Z}^+$ such that 
 \begin{align*}
  &c_n c^{2/5} < \frac{1}{6};& & c_n/C < \frac{1}{6};& &c_n \left(\beta_1 2^{-N}\right)^{4/5} < \frac{1}{6}. 
 \end{align*}
 As a result, if $\delta < \delta(n)$ is sufficiently small, then 
 \[
  \frac{2r_{k}^{1/2}}{r_{k-1}^{1/2}} < \frac{3}{2} \; \Longrightarrow \; \frac{r_{k}}{r_{k-1}} < \frac{9}{16}, \qquad \forall k = 1,2,\cdots,  K\doteq \left\lfloor\log_2 \frac{\beta_1}{C \tau}\right\rfloor - N. 
 \]
 Inserting these in \eqref{ratio estimate rk} yields ($k=1,2,\cdots,K$)
\begin{align*}
  \frac{2r_{k}^{1/2}}{r_{k-1}^{1/2}} & \leq 1 + \left( \frac{1}{6} \cdot 2^{-k} + c_n (2^k C \tau)^{4/5} + \frac{1}{6}\left(\frac{9^{k-1}}{16^{k-1}}\right)^{2/5}\right)\\
 & \leq \exp  \left( \frac{1}{6} \cdot 2^{-k} + c_n (2^k C \tau)^{4/5} + \frac{1}{6}\left(\frac{9^{k-1}}{16^{k-1}}\right)^{2/5}\right).
\end{align*}
It follows that 
\begin{align*}
 \frac{4^K r_K}{r_0} \leq \exp \sum_{k=1}^K  \left( \frac{1}{3} \cdot 2^{-k} + 2 c_n (2^k C \tau)^{4/5} + \frac{1}{3}\left(\frac{9^{k-1}}{16^{k-1}}\right)^{2/5}\right) \lesssim_1 1. 
\end{align*}
Next we recall that 
\[ 
 c_2 \lambda_{n+1} = \max\{r>0: r^{1/2} |w_n(r,0)| = \beta_1\} < r_K. 
\]
 This immediately gives
 \[
   \lambda_{n+1} \lesssim_1 r_K \lesssim_1 4^{-K} r_0 \lesssim_n \tau^2 \lambda_n, 
 \]
 which also implies that 
 \begin{align*}
   \|\chi_0 e_{n+1}\|_{L^1 L^2} \lesssim_{n} \tau.
 \end{align*}
 Combining the inequality $\tau_{n+1} \lesssim_n \tau^2 \lambda_n$ with \eqref{estimate cC}, we obtain 
 \[
  \|\chi_{c\lambda_n} w_{n+1}\|_{Y(\Rm)} + \|\vec{w}_{n+1} (\cdot,0)\|_{\mathcal{H}(c\lambda_n)} + \|\chi_{c\lambda_n} (F(u)-F(S_{n+1})-e_{n+1})\|_{L^1 L^2} \lesssim_n \tau.
 \]
 We then apply Lemma \ref{lemma connection} on $S_{n+1}$ and $u$ between radii $R_1 = c_2 \lambda_{n+1}$ and $R_2 = c \lambda_n$  to deduce 
 \[
  \|\chi_{c_2 \lambda_{n+1}} w_{n+1}\|_{Y(\Rm)} + \|\vec{w}_{n+1} (\cdot,0)\|_{\mathcal{H}(c_2 \lambda_{n+1})} + \|\chi_{c_2 \lambda_{n+1}} (F(u)-F(w_{n+1})-e_{n+1})\|_{L^1 L^2} \lesssim_{n} \tau,
 \]
 as long as $\delta < \delta(n)$ is sufficiently small. This finally verifies $(P_{n})$. In order to complete the proof, it suffices to assume $(P_0), (P_1), \cdots, (P_{J-1})$ and verify $(P_J)$. Our induction hypothesis implies that 
 \begin{align*}
  \|\chi_0 e_J\|_{L^1 L^2} & \lesssim_J \tau; \\
  \|\chi_{c_2 \lambda_{J}} w_{J}\|_{Y(\Rm)} + \|\vec{w}_{J} (\cdot,0)\|_{\mathcal{H}(c_2 \lambda_{J})} + \|\chi_{c_2 \lambda_{J}} (F(u)-F(w_{J})-e_{J})\|_{L^1 L^2} & \lesssim_{J} \tau. 
 \end{align*}
 We may apply Lemma \ref{lemma connection 2} for multiple times to deduce 
 \[
  \|\chi_{c_1 \lambda_{J}} w_{J}\|_{Y(\Rm)} + \|\vec{w}_{J} (\cdot,0)\|_{\mathcal{H}(c_1 \lambda_{J})} + \|\chi_{c_1 \lambda_{J}} (F(u)-F(w_{J})-e_{J})\|_{L^1 L^2}  \lesssim_{J} \tau
 \]
 Here $c_1 = c_1(J)$ is a small constant such that 
 \[
  \|\chi_{0,c_1} W\|_{Y(\Rm)} < \frac{\eta}{3n}. 
 \]
 Our assumption that $u$ is a $J$-bubble solution implies 
 \[
  \sup_{r>0} \left(r^{1/2} |w_J(r,0)|\right) \leq \beta,
 \]
 which enables us to apply Lemma \ref{lemma connection} on $u$ and $S_J$ between radii $0$ and $c_1 \lambda_J$ to obtain 
 \[
  \|\chi_{0} w_{J}\|_{Y(\Rm)} + \|\vec{w}_{J} (\cdot,0)\|_{\mathcal{H}} \lesssim_J \tau.  
 \]
 This completes the proof. 
\end{proof}

Next we give a ways to find an upper bound of $\tau$ by concerning the maximal function of the radiation profile. This upper bound will be used in the subsequent sections. 

\begin{lemma} \label{first lemma upper bound tau}
 Let $v_L$ be a radial free wave with radiation profile $G_-$ and $\lambda$ be a positive number. Then we have 
 \[
  \left\|\chi_0 W_\lambda^4 v_L\right\|_{L^1 L^2} \lesssim_1 \lambda^{1/2} \left(\sup_{r>0} \frac{1}{r}\int_{-r}^r |G_-(s)| {\rm d} s\right).
 \]
\end{lemma}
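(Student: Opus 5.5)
Set $M = \sup_{r>0} \frac{1}{r}\int_{-r}^r |G_-(s)|\,{\rm d}s$. The plan is to bound $v_L$ pointwise in $\Omega_0$ by $M$, and then compute the resulting weighted norm of $W_\lambda^4$ explicitly. By scaling we may assume $\lambda=1$. Indeed, if $v_L$ has profile $G_-$, then $\lambda^{1/2}v_L(\lambda x,\lambda t)$ has profile $\lambda^{1/2}G_-(\lambda s)$, and $M$ scales by the factor $\lambda^{1/2}$. The $L^1L^2$ norm of $W_\lambda^4 v_L$ is invariant under the scaling that fixes $W_\lambda^4 v_L$ as a critical-type product. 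Alternatively, we can simply keep $\lambda$ throughout the computation and check the power at the end.

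\emph{Step 1 (pointwise bound).} By \eqref{basic identity radiation free wave}, for $r>|t|$ we have
\[
|v_L(r,t)| \le \frac{1}{r}\int_{t-r}^{t+r}|G_-(s)|\,{\rm d}s .
\]
Since $|t|<r$, the interval $[t-r,t+r]$ is contained in $[-2r,2r]$. Hence
\[
|v_L(r,t)|\le \frac{1}{r}\int_{-2r}^{2r}|G_-|\,{\rm d}s \le 2M .
\]
So $v_L$ is uniformly bounded by $2M$ on $\Omega_0$. Note that we use only the symmetric maximal-type quantity, which is exactly why the interval $[-r,r]$ appears in the statement.

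\emph{Step 2 (integrate).} It follows that
\[
\|\chi_0 W_\lambda^4 v_L\|_{L^1L^2}\le 2M\int_{\Rm}\Big(\int_{|x|>|t|} W_\lambda(x)^8\,{\rm d}x\Big)^{1/2}{\rm d}t .
\]
Since $W_\lambda(x)\simeq \lambda^{1/2}(\lambda+|x|)^{-1}$, the inner integral is
\[
\simeq \lambda^4\int_{|t|}^\infty (\lambda+r)^{-8}r^2\,{\rm d}r \simeq \lambda^4(\lambda+|t|)^{-5}.
\]
Its square root is $\lambda^2(\lambda+|t|)^{-5/2}$, and integrating in $t$ gives $\simeq \lambda^2\lambda^{-3/2}=\lambda^{1/2}$. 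This yields
\[
\|\chi_0 W_\lambda^4 v_L\|_{L^1L^2}\lesssim_1 \lambda^{1/2}M .
\]

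The only real point is Step 1: one must observe that the exterior region $|x|>|t|$ ensures the domain of integration in the radiation-field formula lies inside $[-2r,2r]$, so that the maximal quantity controls $v_L$ with no loss. The remainder is an elementary computation. I expect no genuine obstacle, apart from tracking the constant coming from the factor $2r$ versus $r$ in the maximal function, which is absorbed into $\lesssim_1$.
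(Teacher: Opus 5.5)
Your argument is correct, and its key step is the same as the paper's: the formula $v_L(r,t)=\frac1r\int_{t-r}^{t+r}G_-(s)\,{\rm d}s$ together with $[t-r,t+r]\subset[-2r,2r]$ gives $|v_L|\le 2M$ on $\Omega_0$. You differ only in how $W_\lambda$ is then handled.

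You keep $v_L$ in $L^\infty$ and compute $\|\chi_0W_\lambda^4\|_{L^1L^2}$ directly. That computation is right, and only the upper bounds are needed: $W_\lambda\simeq_1\lambda^{1/2}(\lambda+|x|)^{-1}$, so $\int_{|x|>|t|}W_\lambda^8\,{\rm d}x\lesssim_1\lambda^4(\lambda+|t|)^{-5}$, and $\int_{\Rm}\lambda^2(\lambda+|t|)^{-5/2}\,{\rm d}t=\frac43\lambda^{1/2}$.

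The paper instead proceeds as follows:
\begin{itemize}
\item it splits $\Omega_0$ into the dyadic regions $\Psi_k$, where $|x|\approx 2^k\lambda$;
\item it applies H\"older in the Strichartz norm, $\|\tilde\chi_kW_\lambda^4v_L\|_{L^1L^2}\le\|\tilde\chi_kW_\lambda\|_{Y(\Rm)}^4\|\tilde\chi_kv_L\|_{Y(\Rm)}$, with $\|\tilde\chi_kW_\lambda\|_{Y(\Rm)}^4\lesssim_1 2^{-2k}$;
\item it bounds $\|\tilde\chi_kv_L\|_{Y(\Rm)}\lesssim_1(2^k\lambda)^{1/2}M$ using the same pointwise estimate;
\item it sums the resulting geometric series.
\end{itemize}

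For this lemma alone your route is shorter and cleaner. What the paper's route buys is the intermediate inequality $\|\chi_0W_\lambda^4v_L\|_{L^1L^2}\lesssim_1\sum_{k\ge0}2^{-2k}\|\tilde\chi_kv_L\|_{Y(\Rm)}$, which is reused in Lemma~\ref{large energy lemma}. There each $\|\tilde\chi_kv_L\|_{Y(\Rm)}$ is bounded by Strichartz through the localized quantity $\|G_-\|_{L^2(-2^{k+1}\lambda,2^{k+1}\lambda)}$ rather than pointwise. That localized control, which is needed to get $\tau\lesssim c\delta^2$, is not provided by a global $L^\infty$ bound through $M$.

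Your opening scaling remark is correct but unnecessary, since Step 2 keeps $\lambda$ explicit anyway.
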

\begin{proof}
 We split $\Omega_0$ into infinitely many regions $\Omega_0 = \Psi_0 \cup \Psi_1 \cup \Psi_2 \cup \cdots$
 \begin{align*}
  &\Psi_0 = \{(x,t): |t|<|x|<\lambda\};& &\Psi_k = \left\{\max\{|t|,2^{k-1}\lambda\}<|x|<2^k \lambda\right\}, \; k=1,2,\cdots;
 \end{align*}
 and let $\tilde{\chi}_k$ be the characteristic function of $\Psi_k$. We have 
 \begin{align*}
  \left\|\chi_0 W_\lambda^4 v_L\right\|_{L^1 L^2} & \leq \sum_{k=0}^\infty \left\|\tilde{\chi}_k W_{\lambda}^4 v_L\right\|_{L^1 L^2} \\
  & \leq \sum_{k=0}^\infty \left\|\tilde{\chi}_k W_{\lambda}\right\|_{Y(\Rm)}^4 \|\tilde{\chi}_k v_L\|_{Y(\Rm)}\\
  & \lesssim_1 \sum_{k=0}^\infty 2^{-2k} \|\tilde{\chi}_{k} v_L\|_{Y(\Rm)}.
 \end{align*}
 Next we find an upper bound of $\|\tilde{\chi}_{k} v_L\|_{Y(\Rm)}$. For convenience we let 
 \[
  M = \sup_{r>0} \frac{1}{r}\int_{-r}^r |G_-(s)| {\rm d} s. 
 \]
 The explicit formula of $v_L$ in term of the radiation profile $G_-$ yields 
 \begin{align*}
  |v_L(x,t)| = \frac{1}{|x|} \left|\int_{t-|x|}^{t+|x|} G_-(s) {\rm d} s\right| \leq \frac{1}{|x|} \int_{-2|x|}^{2|x|} |G_-(s)| {\rm d} s \leq 2M, \quad |x|>|t|. 
 \end{align*}
 This implies that 
 \[
  \|\tilde{\chi}_k v_L\|_{Y(\Rm)} \lesssim_1 (2^k \lambda)^{1/2} M. 
 \]
 Inserting this into the inequality above finishes the proof.
\end{proof}

\subsection{One application of refined estimates}

In this subsection we show that a $J$-bubble solution to (CP1) typically comes with more energy than $J E(W,0)$ unless the radiation profile $G_-(s)$ of its radiation part concentrates around $s=0$ in some sense. 

\begin{lemma} \label{large energy lemma}
 Given a positive integer $J$, let $u$ be a radial $J$-bubble solution to (CP1) as described in Remark \ref{Jbubble remark}, with radiation part $v_L$ and first bubble size $\lambda_1$. There exists two small constants $c=c (J)$ and $\delta = \delta_1(J)$, such that if the radiation profile $G_-$ of $v_L$ satisfies $\|G_-\|_{L^2(\Rm)} < \delta_1$ and 
  \[
   \|G_-\|_{L^2(-2^k \lambda_1, 2^k \lambda_1)} \leq c\cdot 2^k \|G_-\|_{L^2(\Rm)}^2, \qquad \forall k \geq 0. 
  \]
 Then we must have $E(u) > J E(W,0)$. 
\end{lemma}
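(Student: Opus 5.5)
The plan is to use Proposition \ref{refined main tool} to write
\[
\vec u(\cdot,0)=\sum_{j=1}^J \zeta_j(W_{\lambda_j},0)+\vec v_L(\cdot,0)+\vec w(\cdot,0),
\]
and then expand the energy. The point is that the concentration hypothesis makes $\tau$ much smaller than $\delta^2$. The energy of $v_L$ is comparable to $\|G_-\|_{L^2}^2$, so every cross term and every error term should be dominated by $E(v_L)$, which yields $E(u)\ge JE(W,0)+\tfrac12 E(v_L)>JE(W,0)$ (note $G_-\neq 0$, since otherwise $u$ would be non-radiative; the case $G_-=0$ needs a separate trivial check or is excluded by $J$-bubble non-radiative classification).

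\textbf{Step 1: bound $\tau$.} Write $\epsilon=\|G_-\|_{L^2(\Rm)}$. By Strichartz and \eqref{radiation residue identity} we have $\delta\lesssim\epsilon$. For $\lambda\le\lambda_1$ I would redo the dyadic decomposition of Lemma \ref{first lemma upper bound tau}, bounding $\|\tilde\chi_k v_L\|_{Y}$ more carefully. On $\Psi_k$ we have $|x|\sim 2^k\lambda$, and $v_L$ there only depends on $G_-$ restricted to $(-2^{k+1}\lambda,2^{k+1}\lambda)$. By finite speed/Strichartz, $\|\tilde\chi_k v_L\|_Y\lesssim \|G_-\|_{L^2(-2^{k+1}\lambda,2^{k+1}\lambda)}$. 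Choose $m$ with $2^{k+1}\lambda\le 2^m\lambda_1$; then the hypothesis gives a bound $c\,2^m\epsilon^2\lesssim c\,\max(1,2^k\lambda/\lambda_1)\epsilon^2$. Summing against the weights $2^{-2k}$ gives
\[
\|\chi_0W_\lambda^4v_L\|_{L^1L^2}\lesssim c\,\epsilon^2 .
\]
(For the large-$k$ tail, use the trivial bound $\epsilon$ together with $2^{-2k}$.) If the exponent is insufficient, I would refine by keeping the true weights of $W_\lambda^4\lesssim (2^k)^{-4}\lambda^{-2}$ times the local $L^2$ bound. Hence $\tau\lesssim c\,\epsilon^2+\epsilon^5$.

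\textbf{Step 2: energy expansion.} By Proposition \ref{refined main tool}, $\|\vec w\|_{\mathcal H}\lesssim_J\tau\lesssim_J c\,\epsilon^2$ and $\lambda_{j+1}/\lambda_j\lesssim\tau^2$. Expanding,
\[
E(u)=JE(W,0)+E(v_L)+\text{(interaction terms)}.
\]
The bubble--bubble interactions are $O((\lambda_{j+1}/\lambda_j)^{1/2})=O(\tau)$, which is too large as stated. The fix is to note that the leading $\dot H^1$ cross terms $\int\nabla W_{\lambda_j}\cdot\nabla W_{\lambda_k}-W_{\lambda_j}^5W_{\lambda_k}$ vanish because $W$ solves the elliptic equation. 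The remaining terms are quadratic, of size $\lambda_{j+1}/\lambda_j\lesssim\tau^2$. Similarly the linear cross term in $v_L$ reduces to $-\int(F(\sum W)-\sum F(W))v_L-\int W_{\lambda_j}^5v_L$-type terms. For $\int\nabla W\nabla v_L-W^5v_L$, the first-order variation vanishes by the elliptic equation, leaving $\int W^4v_L^2$-type terms of size $\lesssim \tau\epsilon$, together with bubble cross terms. The terms involving $w$ are handled by first-order expansion around the sum: the linear part in $w$ again sees $-\Delta W-W^5=0$ up to errors $O(\tau\cdot\tau+\epsilon\tau)$, leaving $O(\|w\|^2+\epsilon\|w\|)=O(c\,\epsilon^3+c^2\epsilon^4)$. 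Meanwhile $E(v_L)\ge \tfrac12\|\vec v_L\|^2-C\epsilon^6\gtrsim\epsilon^2$.

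\textbf{Main obstacle.} The hardest part is making the interaction errors genuinely $o(\epsilon^2)$, for two reasons. First, the bubble--bubble interaction must be controlled at order $\tau^2$ rather than $\tau$, which requires exploiting the criticality $-\Delta W=W^5$ (the quadratic form) carefully. Second, $v_L$ is only known in $\Omega_0$, and the cross terms are evaluated at $t=0$ on all of $\Rm^3$; I would bound $\int W_\lambda^4 v_L^2$ at time $0$ directly via the $L^2$ localization of $G_-$ on the scales $2^k\lambda_1$, as in Step 1. With $c=c(J)$ small and $\epsilon<\delta_1$, all these errors are at most $\tfrac14 E(v_L)$, which concludes the proof.
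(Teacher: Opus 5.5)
Your Step 1 is the paper's argument: dyadic shells $\Psi_k$, $\|\tilde\chi_k v_L\|_{Y}\lesssim\|G_-\|_{L^2(-2^{k+1}\lambda_1,2^{k+1}\lambda_1)}\lesssim c\,2^k\epsilon^2$, hence $\tau\lesssim c\,\epsilon^2$. The architecture of Step 2 is also the paper's: apply Proposition~\ref{refined main tool} and compare $E(u)$ with $JE(W,0)+E(v_L)$, where $E(v_L)=4\pi\epsilon^2+O(\epsilon^6)$.

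\textbf{A false claim in Step 2, which you do not need.} The bubble--bubble interaction does \emph{not} become quadratic after using $-\Delta W_\lambda=W_\lambda^5$. For $j<k$, the kinetic cross term $\zeta_j\zeta_k\int\nabla W_{\lambda_j}\cdot\nabla W_{\lambda_k}$ occurs once. The potential energy produces both $\zeta_j\zeta_k\int W_{\lambda_j}^5W_{\lambda_k}$ and $\zeta_j\zeta_k\int W_{\lambda_j}W_{\lambda_k}^5$, and each equals the kinetic term by the elliptic equation. So a net $-\zeta_j\zeta_k\int W_{\lambda_j}W_{\lambda_k}^5$, of size $\simeq(\lambda_k/\lambda_j)^{1/2}$, survives at first order.

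You do not need the claim anyway. The constants in Proposition~\ref{refined main tool} depend only on $J$, so $(\lambda_{j+1}/\lambda_j)^{1/2}\lesssim_J\tau\lesssim_J c\,\epsilon^2$. This is absorbed into $4\pi\epsilon^2$ once $c=c(J)$ is small, which is exactly how the paper closes. The same remark handles all $w$-terms, since they are $O(\|\vec w\|_{\mathcal H})=O_J(c\,\epsilon^2)$ and need no cancellation. So the ``main obstacle'' you describe is not there: errors $\le C_J\,c\,\epsilon^2$ are admissible. Nor is there any issue evaluating $v_L$ on all of $\Rm^3$ at $t=0$, since it is a global free wave.

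\textbf{Where your route genuinely differs.} The real crux is the $W_{\lambda_j}$--$v_L$ cross terms, which are a priori only $O(\epsilon)$. You cancel the linear term exactly via $\langle W_{\lambda_j},v_L(\cdot,0)\rangle_{\dot H^1}=\int W_{\lambda_j}^5v_L(\cdot,0)$. This leaves $\int\bigl(F(\sum_j\zeta_jW_{\lambda_j})-\sum_jF(\zeta_jW_{\lambda_j})\bigr)v_L=O(\tau\epsilon)$ and the quadratic remainder $\int W_{\lambda_j}^4v_L^2$. The remainder does not follow from $\tau$, which is a space-time norm. The fixed-time bound you mention at the end does work: H\"older on dyadic shells and $\|v_L(\cdot,0)\|_{L^6(|x|<R)}\lesssim\|G_-\|_{L^2(-R,R)}$ give $\int W_{\lambda_j}^4v_L^2\lesssim\sum_k2^{-2k}\|G_-\|_{L^2(-2^k\lambda_1,2^k\lambda_1)}\,\epsilon\lesssim c\,\epsilon^3$.

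The paper never uses the elliptic equation here. It bounds $\langle W_{\lambda_j},v_L\rangle_{\dot H^1}$, via $\partial_r(rv_L(r,0))=G_-(r)+G_-(-r)$ and the decay of $\partial_r(rW_{\lambda_j})$, and $\int W_{\lambda_j}^5|v_L|$ separately by $\sum_k2^{-5k/2}\|G_-\|_{L^2(-2^k\lambda_1,2^k\lambda_1)}\lesssim c\,\epsilon^2$. Your route gives a smaller error and shows that only the second variation matters, at the cost of a second-order expansion of $|a+b|^6$. The paper's route needs only first-order bounds and the decay of $W_\lambda$.

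\textbf{The case $G_-=0$.} This is a needed hypothesis, not a trivial side case. For $J=1$, $u=\pm W_\lambda$ satisfies every assumption with $G_-=0$ and has $E(u)=E(W,0)$, so the strict inequality fails. The paper's last line also tacitly uses $\delta>0$, which holds in its application.
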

\begin{remark}
 The radiation profile $G_-$ of $v_L$ can be uniquely determined by the corresponding (nonlinear) radiation profiles $g_\pm \in L^2(\Rm^+)$ of $u$. In fact we have 
 \[
  G_-(s) = \left\{\begin{array}{ll} g_-(s), & s>0; \\ -g_+(-s), & s<0. \end{array}\right.
 \]
\end{remark}
\begin{proof}[Proof of Lemma \ref{large energy lemma}]
 First of all, we recall the notations $\Psi_k$, $\tilde{\chi}_k$ given in the proof of Lemma \ref{first lemma upper bound tau} and the inequality 
 \[
  \left\|\chi_0 W_\lambda^4 v_L\right\|_{L^1 L^2} \lesssim_1 \sum_{k=0}^\infty 2^{-2k} \|\tilde{\chi}_{k} v_L\|_{Y(\Rm)}. 
 \]
 Here $\lambda \leq \lambda_1$ is a small bubble size. The explicit formula of $v_L$ in terms of $G_-$ reveals that the values of $v_L$ in $\Psi_k$ depends on the values of $G_-$ in the interval $(-2^{k+1}\lambda, 2^{k+1}\lambda)$ only. It follows from the Strichartz estimates that
 \[
  \|\tilde{\chi}_k v_L\|_{Y(\Rm)} \lesssim_1 \|G_-\|_{L^2(-2^{k+1}\lambda, 2^{k+1}\lambda)} \lesssim_1 \|G_-\|_{L^2(-2^{k+1}\lambda_1, 2^{k+1}\lambda_1)} \lesssim_1 c\cdot 2^k \delta^2.
 \] 
 Here for convenience we define $\delta = \|G_-\|_{L^2(\Rm)}$. As a result, we have 
 \[
  \sup_{\lambda\leq \lambda_1} \left\|\chi_0 W_\lambda^4 v_L\right\|_{L^1 L^2} \lesssim_1 c\delta^2. 
 \]
 This implies that if $\delta < \delta(c, J)$ is sufficiently small, then 
 \[
  \tau = \sup_{\lambda\leq \lambda_1} \left\|\chi_0 W_\lambda^4 v_L\right\|_{L^1 L^2} + \|\chi_0 v_L\|_{Y(\Rm)}^5 \lesssim_1 c\delta^2 + \delta^5 \lesssim_1 c \delta^2. 
 \]
 An application of Proposition \ref{refined main tool} then gives the soliton resolution 
 \[
  \vec{u}(\cdot, 0) = \sum_{j=1}^J \zeta_j (W_{\lambda_j}, 0) + \vec{v}_L(\cdot,0) + \vec{w}_J (\cdot,0),
 \]
 with 
 \begin{align*}
  &\frac{\lambda_{j+1}}{\lambda_j} \lesssim_J c^2 \delta^4; & & \|\vec{w}_J(\cdot,0)\|_{\mathcal{H}} \lesssim_J c \delta^2. 
 \end{align*}
 These estimates, as well as the following inequalities by scale separation 
 \begin{align*}
  \int_{\Rm^3} \left|\nabla W_{\lambda_j} \cdot \nabla W_{\lambda_k}\right| {\rm d} x & \lesssim_1 \left(\frac{\lambda_k}{\lambda_j}\right)^{1/2}, & & j<k;\\
  \sum_{\ell = 1}^5 \int_{\Rm^3} W_{\lambda_j}^\ell W_{\lambda_k}^{6-\ell} {\rm d} x & \lesssim_1 \left(\frac{\lambda_k}{\lambda_j}\right)^{1/2}, & & j<k; 
 \end{align*}
 give the following almost orthogonality properties
 \begin{align*}
  \left|\|\vec{u}(\cdot,0)\|_{\mathcal{H}}^2 - J \|W\|_{\dot{H}^1(\Rm^3)}^2 - 8\pi \delta^2\right| & \lesssim_J \sum_{j=1}^J \left| \langle W_{\lambda_j}, v_L(\cdot,0)\rangle_{\dot{H}^1}\right| + c\delta^2;\\
  \left|\int_{\Rm^3} |u(x,0)|^6 {\rm d} x - J \|W\|_{L^6}^6\right| & \lesssim_J \sum_{j=1}^J \int_{\Rm^3} W_{\lambda_j}^5 |v_L(x,0)| {\rm d} x + c\delta^2.  
 \end{align*}
 A combination of them gives the energy estimate 
 \[
  \left|E(u) - J E(W,0) - 4\pi \delta^2\right| \lesssim_J \sum_{j=1}^J \left| \langle W_{\lambda_j}, v_L(\cdot,0)\rangle_{\dot{H}^1}\right| + \sum_{j=1}^J \int_{\Rm^3} W_{\lambda_j}^5 |v_L(x,0)| {\rm d} x + c\delta^2.  
 \]
 Next we give the upper bounds of the right hand side. We start by observing that 
 \begin{align*}
  \langle W_{\lambda_j}, v_L(\cdot,0)\rangle_{\dot{H}^1} & = 4\pi \int_0^\infty \partial_r (r W_{\lambda_j}) \partial_r (r v_L (r,0)) {\rm d} r \\
  & = 4\pi \int_0^\infty \frac{1}{3\lambda_j^{1/2}}\left(\frac{1}{3} + \frac{r^2}{\lambda_j^2}\right)^{-3/2}[G_-(r)+G_-(-r)] {\rm d} r.
 \end{align*}
 This immediately gives 
 \begin{align*}
  \left|\langle W_{\lambda_j}, v_L(\cdot,0)\rangle_{\dot{H}^1} \right| & \lesssim_1 \int_0^{\lambda_j} \!\!\frac{1}{\lambda_j^{1/2}} (|G_-(r)\!+\!|G_-(-r)|) {\rm d} r + \sum_{k=1}^\infty \int_{2^{k-1} \lambda_j}^{2^k \lambda_j} \!\!\frac{\lambda_j^{5/2}}{r^3} (|G_-(r)\!+\!|G_-(-r)|) {\rm d} r\\
  & \lesssim_1 \|G_-\|_{L^2(-\lambda_j, \lambda_j)} + \sum_{k=1}^\infty 2^{-5k/2} \|G_-\|_{L^2(-2^k \lambda_j, 2^k \lambda_j)}\\
  & \lesssim_1 \sum_{k=0}^\infty 2^{-5k/2}\|G_-\|_{L^2(-2^k \lambda_1, 2^k \lambda_1)}\\
  & \lesssim_1 c\delta^2. 
 \end{align*}
 Similarly we have 
 \begin{align*}
  \int_{\Rm^3} W_{\lambda_j}^5 |v_L(x,0)| {\rm d} x & = \int_{|x|<\lambda_j} W_{\lambda_j}^5 |v_L(x,0)| {\rm d} x + \sum_{k=1}^\infty \int_{2^{k-1} \lambda_j<|x|<2^k \lambda_j} W_{\lambda_j}^5 |v_L(x,0)| {\rm d} x \\
  & \lesssim_1 \|v_L(\cdot,0)\|_{L^6(\{x: |x|<\lambda_j\})} + \sum_{k=1}^\infty 2^{-5k/2} \|v_L(\cdot,0)\|_{L^6(\{x: |x|< 2^k \lambda_j\})} \\
  & \lesssim_1 \sum_{k=0}^\infty 2^{-5k/2} \|G_-\|_{L^2(-2^k \lambda_j, 2^k \lambda_j)} \\
  &  \lesssim_1 \sum_{k=0}^\infty 2^{-5k/2} \|G_-\|_{L^2(-2^k \lambda_1, 2^k \lambda_1)} \\
  & \lesssim_1 c \delta^2. 
 \end{align*}
 In summary we obtain 
 \[
  \left|E(u) - J E(W,0) - 4\pi \delta^2\right| \lesssim_J c \delta^2. 
 \]
 This finally verifies that $E(u) > J E(W,0)$, as long as $c = c(J)$ and $\delta < \delta_1(J) \doteq \delta(c,J)$ are both sufficiently small. 
\end{proof}

\section{The first bubble size}

The rest of this work is devoted to the proof of our main theorem. The proof starts with some set-up work. We argue by a contradiction. Without loss of generality, let us assume that $u$ is defined for all small negative times and blows up in the manner of type II at time $t=0$, with a bubble number $n \geq 1$. This bubble number, as well as our assumption on the free wave part $u_L=0$, immediately gives
\[
 E(u) = n E(W,0). 
\]
The assumption $u_L=0$ also implies that 
\begin{equation} \label{support of claim}
 u(x,t) = 0, \qquad |x|>-t>0. 
\end{equation} 
Indeed, the soliton resolution with $u_L = 0$ immediately gives 
\[
 \lim_{t\rightarrow 0^-} \|\vec{u}(\cdot,t)\|_{\mathcal{H}(|t|)} = 0. 
\]
Solving the wave equation backward in time and applying the finite speed of propagation, we immediately obtain \eqref{support of claim}. In addition, recalling the fact $\vec{u}(t) \in \dot{H}^1\times L^2$, we may fix a small negative time $t_0$ and find a small number $r_0 > 0$ such that 
\[
 \|\vec{u}(t_0)\|_{\mathcal{H}(|t_0|-r_0)} \ll 1. 
\]
By small data theory of exterior solutions, we may extend the domain of the solution $u$ if necessary such that $u$ is also defined in the region 
\[
 \{(x,t): |x|>|t|-r_0, t\leq t_0\}
\]
In view of the support of $u$, we may further extend its domain by defining $u(x,t) =0$ for $|x|>t\geq 0$. In summary we may define $u$ in the region (see figure \ref{figure domain})
\[
 \left(\Rm^3 \times [t_0, 0)\right) \cup \{(x,t): |x|>t \geq 0\} \cup \{(x,t): |x|>|t|-r_0, t<t_0\},
\]
such that $u$ is an exterior solution outside each light cone $|x| = |t-t_1|$ for $t_1 \in [-r_0,0)$. In addition, the small data theory of exterior solution and the support of $u$ also implies that the (nonlinear) radiation profiles of $u$ satisfy
\begin{align*} 
 &G_\pm (s) = 0, \quad s>0;& &\|G_-\|_{L^2([-r_0,+\infty)} \ll 1.
\end{align*}

 \begin{figure}[h]
 \centering
 \includegraphics[scale=1.0]{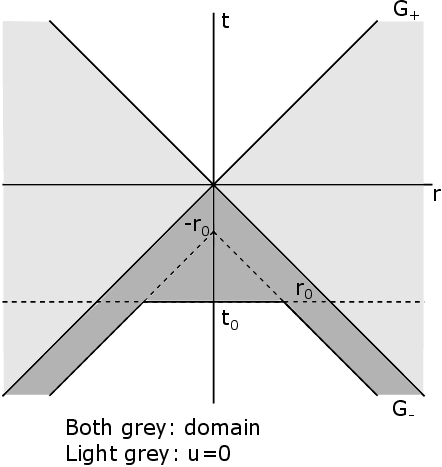}
 \caption{Domain extension of $u$} \label{figure domain}
\end{figure}

For each fixed $t\in [-r_0,0)$, let us consider the time-translated solution $u(\cdot, \cdot + t)$ and restrict its domain to $\Omega_0$. It is not difficult to see that this exterior solution is asymptotically equivalent to the free wave $v_{t,L}$ determined by the radiation profiles
\begin{align*}
 &G_{t,+}(s) = 0, \quad s>0;& &G_{t,-}(s) = G_-(s+t), \quad s>0.
\end{align*}
Clearly we have 
\[
 \|\chi_0 v_{t,L}\|_{Y(\Rm)} \lesssim_1 \|\vec{v}_{t,L}\|_{\mathcal{H}} \lesssim_1 \|G_-\|_{L^2([t,0])} \ll 1.  
\]
We define $\delta(t) = \|G_-\|_{L^2([t,0])}$ for convenience. An application of Proposition \ref{main tool} then implies that there exist $\zeta_j \in \{+1,-1\}$ and scale functions $\lambda_j(t)$ for $j=1,2,\cdots,n$ such that 
\begin{align*}
 &\frac{\lambda_{j+1}(t)}{\lambda_{j}(t)} \lesssim_j \delta^2(t),\quad j=1,2,\cdots, n-1;& &\left\|\vec{u}(t) -\sum_{j=1}^n \zeta_j (W_{\lambda_j(t)},0) - \vec{v}_{t,L}(0)\right\|_{\mathcal{H}} \lesssim_n \delta(t).
\end{align*}
Here we only consider sufficiently small time $t$ and use the assumption on the bubble number. The signs $\zeta_j$ do not depend on the choice of small time $t$ by a continuity argument. For convenience we define
\[
 w(t;x,t') = u(x,t+t') - \sum_{j=1}^n \zeta_j W_{\lambda_j(t)} (x) - v_{t,L}(x,t'), \qquad (x,t') \in \Omega_0. 
\]
We may rewrite the soliton resolution in the form of 
\begin{align*}
 &\vec{u}(\cdot,t) = \sum_{j=1}^n \zeta_j (W_{\lambda_j(t)}, 0) + \vec{v}_{t,L}(\cdot,0) + \vec{w}(t; \cdot,0);& &\|\vec{w}(t;\cdot,0)\|_{\mathcal{H}} \lesssim_n \delta(t). 
\end{align*}
Please note that $\|G_-\|_{L^2(t,0)} > 0$ for any $t<0$. Otherwise $u(\cdot, \cdot+t)$ must be a non-radiative solution, thus either a zero solution or a ground state. This is a contradiction. The following lemma gives a precise estimate on the size of the first bubble. 

\begin{lemma} \label{size of the first bubble lemma}
 There exists a small constant $\delta_0>0$, such that if $u$ is a radial solution of (CP1) defined in the exterior region $\Omega_0$ satisfying the following conditions
 \begin{itemize} 
  \item $\vec{u}(0)$ is supported in the ball $\{x: |x|\leq R\}$; 
  \item the solution is a scattering exterior solution in $\Omega_0$, i.e. $\|\chi_0 u\|_{Y(\Rm)} < +\infty$; 
  \item The (nonlinear) radiation profiles of $u$ satisfy $G_+(s) =0$ for $s>0$ in the positive time direction and $\delta \doteq \|G_-\|_{L^2(\Rm^+)} < \delta_0$ in the negative time direction; 
 \end{itemize}
 then exactly one of the following holds: 
 \begin{itemize}
  \item[(a)] $u$ is a zero-bubble solution. In addition 
  \[
   \left|\int_{0}^R G_-(s) {\rm d} s\right| \lesssim_1 R^{1/2}\delta^5.
  \]
  \item[(b)] $u$ is a solution with at least one bubble. In addition, the sign $\zeta_1$ and size $\lambda_1$ of the first bubble, as given by Proposition \ref{main tool}, must satisfy the inequalities
  \begin{align*}
   &\lambda_1 \lesssim_1 R \delta^2; & &\left|\int_0^R G_-(s) {\rm d} s + \zeta_1 \lambda_1^{1/2}\right| \lesssim_1 R^{1/2} \delta^{9/5}. 
 \end{align*}
 \end{itemize}
\end{lemma}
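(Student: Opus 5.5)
Throughout, $v_L$ denotes the radiation part of $u$ in $\Omega_0$. Its radiation profile is $G_-(s)$ for $s>0$ and $-G_+(-s)=0$ for $s<0$, so $v_L$ is built from $G_-\mathbf{1}_{s>0}$ alone. The plan is to compare the explicit value of $r u(r,0)$ at some radius $r\ge R$ (which vanishes by the support assumption) with the value predicted by the soliton resolution of Proposition \ref{main tool}, refined as in Proposition \ref{refined main tool}.

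First I would use the support assumption. The data vanish for $r>R$, and by finite speed of propagation so does $u$ in $\Omega_R$. Hence $u$ is $R$-weakly non-radiative, and by Lemma \ref{scatter profile of nonlinear solution} applied in $\Omega_R$ the nonlinear profiles satisfy $G_\pm(s)=0$ for $s>R$. Thus $\delta=\|G_-\|_{L^2(0,R)}$ and $\int_0^\infty G_- = \int_0^R G_-$.

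By \eqref{initial data by radiation profile}, the free wave satisfies
\[
 r\,v_L(r,0)=\int_{-r}^{r} G_-(s)\,{\rm d}s=\int_0^R G_-(s)\,{\rm d}s \doteq A \quad \text{for every } r\ge R .
\]
The soliton resolution at $t=0$ reads $u(\cdot,0)=\sum_j\zeta_jW_{\lambda_j}+v_L(\cdot,0)+w(\cdot,0)$. Evaluating at a radius $r\ge R$, where $u(r,0)=0$, gives
\[
 0=\sum_j\zeta_j\, rW_{\lambda_j}(r)+A+r\,w(r,0).
\]
Since $rW_{\lambda}(r)=\lambda^{1/2}\bigl(1+\lambda^2/(3r^2)\bigr)^{-1/2}=\lambda^{1/2}+O(\lambda^{5/2}r^{-2})$, this becomes $A+\sum_j\zeta_j\lambda_j^{1/2}\approx -r\,w(r,0)$. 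The error obeys $r\,|w(r,0)|\le r^{1/2}\cdot r^{1/2}|w(r,0)|\lesssim r^{1/2}\|\vec w(0)\|_{\mathcal H(r)}$ by \eqref{radiation residue identity}. So the whole task is to estimate $\|\vec w(0)\|_{\mathcal H(R)}$ far better than $\delta$.

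The key step is to control $w$ outside the radius $R$. There, Lemma \ref{lemma connection 2} (or Lemma \ref{lemma connection} from $R_2=\infty$ down to $R_1=R$) should bound $\|\vec w(0)\|_{\mathcal H(R)}$ by the forcing terms restricted to $\Omega_R$. These are $\|\chi_R v_L\|_Y^5\lesssim\delta^5$ and the interaction terms $\|\chi_R W_{\lambda_j}^4v_L\|_{L^1L^2}$.

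For the interaction terms I would use the idea of Lemma \ref{first lemma upper bound tau}: in $\Omega_R$, $|v_L|\le |A|/r$ plus the contribution of $G_-$ on $(R-\text{small},R)$, which is essentially $0$. So $v_L$ behaves like the non-radiative wave $A/r$. More simply, I would write $v_L=A/r$ exactly in $\Omega_R$ and absorb $A/r$ into a modified first-bubble term. The sharper route is to fold $A/r$ together with $\zeta_1W_{\lambda_1}$, so that in $\Omega_R$ the approximate solution is $\zeta_1W_{\lambda_1}+A/r$. Its error is $F(\zeta_1W_{\lambda_1}+A/r)-F(\zeta_1W_{\lambda_1})$, of size $|A|\lambda_1^2 R^{-5/2}$ to leading order times constants, using $W_{\lambda_1}\sim\lambda_1^{1/2}/r$ for $r\ge R\gg\lambda_1$.

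Case (a), zero bubbles: $u$ itself must be close to $v_L=A/r$ in $\Omega_R$ with forcing $\chi_R F(A/r)$. This has $L^1L^2$ norm $\sim |A|^5R^{-5/2}\cdot R^{2}$, i.e. of scale-invariant size $(|A|/R^{1/2})^5$. Lemma \ref{lemma connection} then gives $r^{1/2}|w(R,0)|\lesssim (|A|R^{-1/2})^5$. Since $u=0$ there, $|A|R^{-1/2}\lesssim (|A|R^{-1/2})^5+\delta^5$. With $|A|\le R^{1/2}\delta$ small, this yields $|A|\lesssim R^{1/2}\delta^5$.

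Case (b), at least one bubble: I would first show $\lambda_1\lesssim R\delta^2$. From step (I) of the proof of Proposition \ref{main tool}, $c_2\lambda_1=R_1$ is the largest radius where $r^{1/2}|w_0(r,0)|=\beta_1$, with $w_0=u-v_L$. For $r\ge R$ we have $w_0=-A/r$, so $r^{1/2}|w_0|\le R^{1/2}\delta\cdot r^{-1/2}$, which is below $\beta_1$ once $r\gtrsim R\delta^2$. Hence $\lambda_1\lesssim R\delta^2$. Then, at $r=R$, the remaining bubbles satisfy $\lambda_j\lesssim\delta^2\lambda_1$, so their contributions $\lambda_j^{1/2}$ are negligible at order $R^{1/2}\delta^{2}$ or better. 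The correction $\lambda_1^{5/2}R^{-2}\lesssim R^{1/2}\delta^5$ is also negligible.

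The main obstacle is bounding $\|\vec w(0)\|_{\mathcal H(R)}$ by $R^{1/2}$-scaled quantities of order $\delta^{9/5}$ rather than $\delta$. Concretely, I would estimate the interaction $\|\chi_R W_{\lambda_1}^4 v_L\|_{L^1L^2}$ with $|v_L|\lesssim |A|/r$ and $W_{\lambda_1}\lesssim\lambda_1^{1/2}/r$ on $\Omega_R$. Over the exterior cone this gives about $\lambda_1^2|A|R^{-5/2}\cdot R^{?}$, which in scale-invariant form is $(\lambda_1/R)^2\cdot |A|R^{-1/2}\lesssim\delta^5$.

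If the exterior comparison instead loses regularity and only yields a Hölder interpolation between the crude bound $\delta$ and the good bound $\delta^5$ (or uses $\tau$ from Proposition \ref{refined main tool} with the maximal-function bound of Lemma \ref{first lemma upper bound tau}, giving $\tau\lesssim(\lambda_1/R)^{1/2}\delta\lesssim\delta^2$), I expect a loss to an exponent like $9/5$. That is the statement's bound, and I would accept it. Combining the pieces gives $|A+\zeta_1\lambda_1^{1/2}|\lesssim R^{1/2}\delta^{9/5}$.
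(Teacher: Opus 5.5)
Your key step is circular. On $\Omega_R$ you have $u\equiv 0$ and $v_L=A/r$ exactly, so there $w=-\sum_j\zeta_jW_{\lambda_j}-A/r$. A bound on $\|\vec w(0)\|_{\mathcal H(R)}$ is therefore literally the estimate you want to prove, and the smallness of the forcing on $\Omega_R$ cannot supply it. Lemma \ref{lemma connection} bounds the energy of $w$ outside $R_1$ by $R_1^{1/2}|w(R_1,0)|$ plus forcing terms; it never bounds $R_1^{1/2}|w(R_1,0)|$ itself. With $R_1=R$, $R_2=\infty$ it reads $\|\vec w(0)\|_{\mathcal H(R)}\lesssim R^{1/2}|w(R,0)|+\varepsilon$, and $R^{1/2}|w(R,0)|$ is exactly the unknown $R^{-1/2}\bigl|A+\sum_j\zeta_j R W_{\lambda_j}(R)\bigr|$. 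Lemma \ref{lemma connection 2} cannot be started from $R_2=\infty$ at all. This is structural: $1/r$ is an $R$-weakly non-radiative free wave, so no radiation argument on $\Omega_R$ can detect its coefficient. Your case (a) argument shows the problem, since it never uses the zero-bubble hypothesis, which is the only thing separating (a) from (b). The coefficient has to be pinned at an inner radius where $w$ is controlled pointwise. In case (a) the paper applies Lemma \ref{lemma connection} from $R_1=0$. There the boundary term vanishes, and the hypothesis $\sup_{r>0}r^{1/2}|(u-v_L)(r,0)|\le\beta$ is precisely the zero-bubble condition. This gives $\|\vec u(0)-\vec v_L(0)\|_{\mathcal H}\lesssim\delta^5$ on all of $\Rm^3$, and restricting to $r>R$ yields $|A|\lesssim R^{1/2}\delta^5$.

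In case (b) the missing idea is a rematching at an intermediate radius. The paper takes $R_1$ to be the largest $r$ with $r^{1/2}|(u-v_L)(r,0)|=\delta^{-1/5}W(\delta^{-2/5})\simeq\delta^{1/5}$, so that $c_2\lambda_1<R_1<R$. It sets $\lambda=\delta^{2/5}R_1$ and $\zeta=\mathrm{sign}\,(u-v_L)(R_1,0)$, chosen so that $u-v_L-\zeta W_\lambda$ vanishes at $R_1$. Lemma \ref{lemma connection} from $R_1$ to $\infty$ then has no boundary term, and its error is $\lesssim\|\chi_{R_1}W_\lambda\|_{Y(\Rm)}^4\|\chi_{R_1}v_L\|_{Y(\Rm)}\lesssim\delta^{4/5}\cdot\delta$. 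This, not an interpolation, is where $9/5$ comes from. Restricting to $r>R$ gives $\lambda\lesssim R\delta^2$ and $|A+\zeta\lambda^{1/2}|\lesssim R^{1/2}\delta^{9/5}$. Comparing with the original resolution on $r>R_1$ gives $\zeta=\zeta_1$ and $|\lambda^{1/2}-\lambda_1^{1/2}|\lesssim R_1^{1/2}\delta\lesssim R^{1/2}\delta^{9/5}$, since $R_1=\delta^{-2/5}\lambda\lesssim R\delta^{8/5}$.

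Your fallback through Proposition \ref{refined main tool} also fails. Lemma \ref{first lemma upper bound tau} bounds $\tau$ by $\lambda_1^{1/2}\sup_{r>0}r^{-1}\int_{-r}^r|G_-|$, and this is not $\lesssim R^{-1/2}\delta$. If $G_-$ concentrates on $[0,\epsilon]$ with $\epsilon\lesssim\lambda_1$, the supremum is of size $\epsilon^{-1/2}\delta$ and $\tau\sim\delta$. Ruling out such concentration is what Section 5 achieves for selected times, whereas this lemma must hold without it.

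Finally, your proof of $\lambda_1\lesssim R\delta^2$ uses $u-v_L=-A/r$ for $R\delta^2\lesssim r<R$, where it is not available. It holds only for $r\ge R$, and there it yields only $c_2\lambda_1<R$. Instead, use $\bigl\|\sum_j\zeta_jW_{\lambda_j}+A/r\bigr\|_{\dot H^1(\{x:|x|>R\})}\lesssim\delta$ from the resolution together with $\|A/r\|_{\dot H^1(\{x:|x|>R\})}\lesssim\delta$. This forces $\|W_{\lambda_1}\|_{\dot H^1(\{x:|x|>R\})}\lesssim\delta$, hence $\lambda_1\lesssim R\delta^2$.
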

\begin{proof}
 First of all, the support of $\vec{u}(0)$ implies that $u \equiv 0$ in $\Omega_R$, thus $G_-(s) = 0$ for $s>R$. We use the notation $v_L$ for the radial free wave asymptotically equivalent to $u$ in $\Omega_0$. The radiation profile $g_-$ of $v_L$ is supported in $[0,R]$ with 
 \[
  g_-(s) = G_-(s), \qquad s\in [0,R]. 
 \]
 It follows from the Strichartz estimates that $\|v_L\|_{Y(\Rm)} \lesssim_1 \delta$. As in the first step of proof for Proposition \ref{main tool}, we define $w = u - v_L$ and consider the value of $r^{1/2} w(r,0)$. A direct calculation shows 
 \begin{equation} \label{outer estimate R} 
  \sup_{r\geq R} r^{1/2} |w(r,0)| \lesssim_1 \|\vec{w}(\cdot,0)\|_{\mathcal{H}(R)} = \|\vec{v}_L(\cdot,0)\|_{\mathcal{H}(R)} \lesssim_1 \delta. 
 \end{equation}
 A review of the proof for Proposition \ref{main tool} shows that there are two cases. In the first case the soliton resolution of $\vec{u}(0)$ does not contain any bubble and the following inequality holds
 \[
  \sup_{r > 0} r^{1/2} |w(r,0)| < \beta. 
 \]
 This enable us to apply Lemma \ref{lemma connection} to deduce
 \[
   \|\vec{u}(\cdot,0) - \vec{v}_L(\cdot,0)\|_{\mathcal{H}} \lesssim_1 \|\chi_0 v_L^5\|_{L^1 L^2} \lesssim_1 \delta^5. 
 \]
 This implies that 
 \[
  \|\vec{v}_L(\cdot,0)\|_{\mathcal{H}(R)} \lesssim_1 \delta^5 \quad \Longrightarrow \quad \left|\int_{0}^R G_-(s) {\rm d} s\right| \lesssim_1 R^{1/2}\delta^5. 
 \]
 Here we utilize the explicit formula
 \begin{equation} \label{explicit vL lambda 1}
  v_L(r,0) = \frac{1}{r}\int_0^R G_-(s) {\rm d} s, \qquad r>R. 
 \end{equation}
 In the second case, the following inequality holds and the soliton resolution of $\vec{u}(0)$ comes with at least one bubble
 \[
  \sup_{r>0} r^{1/2} |w(r,0)| \geq \beta. 
 \]
 When $\delta$ is sufficiently small, we choose
 \begin{align*}
  R_1 = \max\left\{r>0: r^{1/2} |w(r,0)| = \delta^{-1/5} W(\delta^{-2/5}) \right\}. 
 \end{align*}
 Since $\delta \ll \delta^{-1/5} W(\delta^{-2/5}) \simeq_1 \delta^{1/5} \ll 1$, we may compare this definition of $R_1$ with \eqref{def of lambda 1} and \eqref{outer estimate R} to deduce that $R_1 \in (c_2 \lambda_1, R)$. Next we apply Lemma \ref{lemma connection} on $u$ and $v_L+ \zeta W_{\lambda}$, with $\lambda = \delta^{2/5} R_1$ and $\zeta = \hbox{sign} (w(R_1,0))$, between the radii $R_1$ and $R_2 \rightarrow +\infty$ to deduce 
 \begin{align}
 \left\|\vec{u}(\cdot, 0) - \vec{v}_L(\cdot, 0) - \left(\zeta W_{\lambda},0\right)\right\|_{\mathcal{H}(R_1)} &\lesssim_1 \left\|\chi_{R_1} \left[F(\zeta W_{\lambda}) - F(v_L + \zeta W_{\lambda})\right]\right\|_{L^1 L^2} \nonumber\\
 & \lesssim_1 \left(\|\chi_{R_1} v_L\|_{Y(\Rm)}^4 + \|\chi_{R_1} W_\lambda\|_{Y(\Rm)}^4\right)\|\chi_{R_1} v_L\|_{Y(\Rm)}\nonumber\\
 & \lesssim_1 \delta^{9/5}. \label{alternative soliton resolution}
 \end{align}
 Combining this with the support of $\vec{u}(0)$ and the explicit expression \eqref{explicit vL lambda 1}, we obtain 
 \begin{align} \label{tail 14kappa}
  &\left\|\frac{\alpha'}{|x|} + \zeta W_{\lambda}\right\|_{\dot{H}^1(\{x: |x|>R\})} \lesssim_1 \delta^{9/5}; & & \alpha' = \int_0^R G_-(s) {\rm d} s. 
 \end{align}
 Since 
 \[
  \left\|\alpha'/|x|\right\|_{\dot{H}^1(\{x: |x|>R\})}\lesssim_1 R^{-1/2} \int_0^R |G_-(s)| {\rm d} s \lesssim_1 \delta, 
 \] 
  the inequality \eqref{tail 14kappa} implies that 
 \[
  \|W_{\lambda}\|_{\dot{H}^1(\{x: |x|>R\})} \lesssim_1 \delta \ll 1 \quad \Longrightarrow \quad \lambda \lesssim_1 R \delta^2. 
 \]
 We observe
 \[
  \partial_r W_\lambda = - \frac{r}{\lambda^{5/2}} \left(\frac{1}{3} + \frac{r^2}{\lambda^2}\right)^{-3/2} = - \frac{\lambda^{1/2}}{r^2} \left(1+\frac{\lambda^2}{3r^2}\right)^{-3/2} = -\frac{\lambda^{1/2}}{r^2} + O_1\left(\frac{\lambda^{5/2}}{r^4}\right), \quad r>R\gg \lambda,
 \]
 which means that 
 \[
  \left\|W_\lambda - \frac{\lambda^{1/2}}{|x|}\right\|_{\dot{H}^1(\{x: |x|>R\})} \lesssim_1 \left\|\frac{\lambda^{5/2}}{r^4}\right\|_{L^2([R,+\infty); r^2 {\rm d} r)} \lesssim_1 \left(\frac{\lambda}{R}\right)^{5/2} \lesssim_1 \delta^5. 
 \]
 Inserting this into \eqref{tail 14kappa}, we obtain 
 \begin{equation} \label{estimate alpha}
  \left\|\frac{\alpha' + \zeta \lambda^{1/2}}{|x|} \right\|_{\dot{H}^1(\{x: |x|>R\})} \lesssim_1 \delta^{9/5}\quad \Longrightarrow \quad |\alpha'+\zeta \lambda^{1/2}| \lesssim_1 R^{1/2}  \delta^{9/5}. 
 \end{equation} 
 Now we figure out the relationship of $(\zeta, \lambda)$ and the sign $\zeta_1$ and size $\lambda_1$ of the first bubble in the original soliton resolution. We recall $c_2 \lambda_1 < R_1$, combine \eqref{alternative soliton resolution} and the original soliton resolution to deduce 
 \[
  \left\|\zeta W_\lambda - \zeta_1 W_{\lambda_1}\right\|_{\dot{H}^1(\{x: |x|>R_1\})} \lesssim_1 \delta \ll 1. 
 \]
 Combining this with the fact that 
 \[
  \|W\|_{\dot{H}^1(\{x: |x|>r\})}\simeq_1 \min\{1, r^{-1/2}\}, 
 \]
 and $\lambda = R_1 \delta^{2/5}$,  we see that $\zeta = \zeta_1$ and $\lambda \simeq_1 \lambda_1$. A similar argument as above shows that 
 \begin{align*}
  \left\|W_\lambda - \frac{\lambda^{1/2}}{|x|}\right\|_{\dot{H}^1(\{x: |x|>R_1\})} & \lesssim_1 \left(\frac{\lambda}{R_1}\right)^{5/2} \lesssim_1 \delta;\\
  \left\|W_{\lambda_1} - \frac{\lambda_1^{1/2}}{|x|}\right\|_{\dot{H}^1(\{x: |x|>R_1\})} & \lesssim_1 \left(\frac{\lambda_1}{R_1}\right)^{5/2} \lesssim_1 \delta. 
 \end{align*}
 A combination of the three estimates above immediately yields 
 \[
  \left\|\frac{\lambda^{1/2}}{|x|} - \frac{\lambda_1^{1/2}}{|x|}\right\|_{\dot{H}^1(\{x: |x|>R_1\})} \lesssim_1 \delta \quad \Longrightarrow \quad \left|\lambda^{1/2}-\lambda_1^{1/2}\right| \lesssim_1 R_1^{1/2} \delta.
 \]
 By the estimate $R_1 \delta^{2/5} = \lambda \lesssim_1 R \delta^2$, we also have $R_1 \lesssim_1 R \delta^{8/5}$. As a result, we obtain the estimate $|\lambda^{1/2}-\lambda_1^{1/2}| \lesssim_1 R^{1/2} \delta^{9/5}$. Combining this with \eqref{estimate alpha}, we finally obtain 
 \[
  \left|\int_0^R G_-(s) {\rm d} s + \zeta_1 \lambda_1^{1/2}\right| \lesssim_1 R^{1/2}\delta^{9/5}. 
 \]
 This finishes the proof. 
\end{proof}

\begin{remark} \label{reference point} 
 We may apply the lemma above on the time-translated version $u(\cdot, \cdot+t)$ of the type II blow-up solution $u$ described at the beginning of this section for small time $t<0$ and conclude that 
   \begin{align*}
   &\lambda_1(t) \lesssim_1 |t| \delta(t)^2; & &\left|\int_t^0 G_-(s) {\rm d} s + \zeta_1 \lambda_1(t)^{1/2}\right| \lesssim_1 |t|^{1/2} \delta(t)^{9/5}. 
 \end{align*}
Here $G_-$ is the (nonlinear) radiation profile of $u$. The fact that $\vec{u}(t)$ is supported in the ball $\{x: |x|<|t|\}$ plays an important role in the argument, as $r= |t|$ becomes a natural reference point when we evaluate the first bubble size. The case of global solutions is much different. Indeed, a similar estimate on the bubble size is not likely to hold. Let us consider the case of ground state $W(x)$. The bubble size is a constant $1$ but the radiation strength is zero, i.e. $G_- = 0$ and $\delta=0$. 
\end{remark}

\section{Choice of good times}

Throughout this section we assume that $u$ is a radial type II blow-up solution without radiation, with (nonlinear) radiation profile $G_-$ in the negative time direction, radiation part $v_{t,L}$, first bubble size $\lambda_1(t)$ and norm function $\delta(t)$, as defined at the beginning of Section 4. Without loss of generality, we also assume that the sign $\zeta_1$ of the first bubble is negative. In this section we show that we may find times $t$ satisfying
\begin{itemize}
 \item[(a)] These times are sufficiently close to the blow-up time $0$;
 \item[(b)] ``Almost best possible'' lower bound estimate on the size of first bubble holds for these times. As shown in Lemma \ref{size of the first bubble lemma}, the inequality $\lambda_1(t) \lesssim_1 |t|\delta(t)^2$ holds for all sufficiently small time. Given any small constant $\kappa$, we show that the roughly reversed inequality $\lambda_1(t) \gtrsim_1 |t| \delta(t)^{2+\kappa}$ also holds for selected times. 
 \item[(c)] Refined soliton resolution holds for these times. Proposition \ref{main tool} implies that the following estimates hold for all sufficiently small times:
 \begin{align*}
  &\frac{\lambda_{j+1}(t)}{\lambda_j(t)} \lesssim_j \delta(t)^2;& & \|\vec{w}(t;\cdot,0)\|_{\mathcal{H}} \lesssim_n \delta(t). 
 \end{align*}
 We apply the classical theory of maximal functions to deduce that the estimate 
 \[
  \tau(t) \doteq \sup_{\lambda \leq \lambda_1(t)} \left\|\chi_0 W_\lambda^4 v_{t,L}\right\|_{L^1 L^2} + \delta(t)^5 \leq \delta(t)^{2-\kappa}
 \]
 also holds for selected times. Proposition \ref{refined main tool} immediately gives the following refined soliton resolution estimates for these times in further argument. 
 \begin{align*}
  &\frac{\lambda_{j+1}(t)}{\lambda_j(t)} \lesssim_j \delta(t)^{4-2\kappa};& & \|\vec{w}(t;\cdot,0)\|_{\mathcal{H}} \lesssim_n \delta(t)^{2-\kappa}. 
 \end{align*}
\end{itemize} 
The estimates given above play an essential role in the proof of our main theorem. We first find times satisfying (a) and (b), and then slightly adjust the values of $t$ to make them satisfy (c) as well. 

\begin{lemma} \label{lemma big lambda one}
 Given any constant $\kappa > 0$ and time $t_0 < 0$, there exists a time $t\in [t_0,0)$, such that $\lambda_1(t) \geq |t| \delta(t)^{2+\kappa}$. 
\end{lemma}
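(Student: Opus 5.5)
We argue by contradiction. Suppose that $\lambda_1(t) < |t|\delta(t)^{2+\kappa}$ for every $t\in[t_0,0)$; shrinking $|t_0|$ if necessary, we may also assume $\delta(t)\le\delta(t_0)\ll 1$ on this interval. The plan has two parts. First, from the energy identity $E(u)=nE(W,0)$ and Lemma \ref{large energy lemma} we extract a lower bound on the concentration of $G_-$ just to the right of each time $t$. Second, we iterate this lower bound to show that $\|G_-\|_{L^2(t,0)}$ would have to vanish at some $t<0$, which contradicts the positivity of $\delta(t)$ established in Section 4.

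\textbf{Step 1 (concentration at every time).} Fix $t\in[t_0,0)$ and apply Lemma \ref{large energy lemma} to the time-translated solution $u(\cdot,\cdot+t)$. This solution is an $n$-bubble solution, its radiation part $v_{t,L}$ has profile $G_{t,-}$, and $G_{t,-}(s)=G_-(s+t)$ for $s>0$. For $s<0$ we have $G_{t,-}(s)=-G_{t,+}(-s)=0$, by the remark after Lemma \ref{large energy lemma} together with $G_{t,+}\equiv 0$. Hence $\|G_{t,-}\|_{L^2(\Rm)}=\delta(t)$. Because $E(u)=nE(W,0)$ is not strictly larger than $nE(W,0)$, the hypothesis of Lemma \ref{large energy lemma} must fail. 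So there is an integer $k\ge 0$ with
\[
\|G_-\|_{L^2(t,\,t+2^k\lambda_1(t))}> c\,2^k\delta(t)^2,\qquad c=c(n).
\]
Since this norm is at most $\delta(t)$, the integer satisfies $2^k< c^{-1}\delta(t)^{-1}$. Writing $\ell(t)=2^k\lambda_1(t)$, our contradiction hypothesis gives
\[
\ell(t)< c^{-1}\delta(t)^{-1}\lambda_1(t)< c^{-1}|t|\,\delta(t)^{1+\kappa}.
\]
We also have the quantitative gain
\[
\|G_-\|^2_{L^2(t,t+\ell(t))}\ge c^2\,2^{2k}\delta(t)^4=c^2\,\frac{\ell(t)}{\lambda_1(t)}\,\ell(t)\,\delta(t)^4>c^2\,\frac{\ell(t)\,\delta(t)^{2-\kappa}}{|t|}.
\]
In words: $G_-$ carries $L^2$ mass at density at least $c^2\delta(t)^{2-\kappa}/|t|$ per unit length on the short interval $[t,t+\ell(t)]$, whose length satisfies $\ell(t)\ll|t|$.

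\textbf{Step 2 (iteration).} Define $t_{m+1}=t_m+\ell(t_m)$, starting from $t_0$. Then $\delta(t_{m+1})^2=\delta(t_m)^2-\|G_-\|^2_{L^2(t_m,t_{m+1})}$. On the other hand $|t_{m+1}|\ge|t_m|(1-c^{-1}\delta^{1+\kappa})$, so $|t|$ stays essentially constant across each step. We compare the two rates:
\[
\frac{d(\delta^2)}{d|t|}\gtrsim\frac{\delta^{2-\kappa}}{|t|}.
\]
Integrating this, $\delta^\kappa$ decreases by roughly $\kappa c^2\log(|t_0|/|t|)$. Therefore $\delta(t_m)$ reaches $0$ before $|t_m|$ has decreased by more than a fixed factor, say before $|t_m|$ drops below $e^{-C\delta(t_0)^\kappa/\kappa}|t_0|$.

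We must make sure the sequence actually reaches such a time and does not accumulate at some $t_\infty<0$ first. If $t_m\to t_\infty<0$, then $\ell(t_m)\to 0$. But by Remark \ref{reference point} the first bubble size is comparable to $\left(\int_t^0G_-\right)^2$, which keeps $\lambda_1(t_m)$ from collapsing in a way inconsistent with the density bound. The simplest rigorous route is to avoid discrete steps altogether and use a continuous covering argument: apply the Vitali or rising-sun covering lemma to the intervals $[t,t+\ell(t)]$ over $t\in[t_0,t_0/2]$. This yields
\[
\delta(t_0)^2\ge\int_{t_0}^{t_0/2}(\text{density})\,dt\gtrsim c^2\,\delta(t_0/2)^{2-\kappa}.
\]
Iterating dyadically, $t_0\to t_0/2\to t_0/4\to\cdots$, gives $\delta(2^{-j}t_0)^{2-\kappa}\lesssim\delta(2^{-j+1}t_0)^2$. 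That is a power gain of $2/(2-\kappa)>1$ at each dyadic step.

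Combining this superlinear decay of $\delta$ along the dyadic times with the per-step decrement in the continuous version forces $\delta(t)=0$ for some $t<0$. This contradicts $\|G_-\|_{L^2(t,0)}>0$. I expect the main obstacle to be the bookkeeping in this covering and iteration step, namely passing from pointwise concentration on tiny intervals to a genuine decrement of $\delta$ comparable to $\delta^{2-\kappa}$ at every dyadic scale. A pure power gain alone only gives decay, not vanishing. To get vanishing, I would use the ODE-type inequality $-\frac{d}{ds}\delta^2\gtrsim\delta^{2-\kappa}$ in the logarithmic variable $s=\log(|t_0|/|t|)$, which reaches zero in finite $s$, i.e.\ at some $t<0$.
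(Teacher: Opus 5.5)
Your Step 1 is exactly the paper's first step: Lemma \ref{large energy lemma} applied to $u(\cdot,\cdot+t)$, the bound $2^{k}<c^{-1}\delta(t)^{-1}$, and $\ell(t)<c^{-1}|t|\delta(t)^{1+\kappa}$. Your log-time ODE is also the right heuristic. But Step 2 never proves that $\delta$ vanishes at a negative time; the closing sentence only asserts it. Three things go wrong.

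(i) You have the obstacle backwards. Accumulation $t_m\to t_\infty<0$ is not something to exclude; it \emph{is} the contradiction. With $\varphi_m=\delta(t_m)^2$, your gain gives $\varphi_m-\varphi_{m+1}>c^2 2^{2k(t_m)}\varphi_m^2\geq c^2\varphi_m^2$. Hence $\varphi_m\to 0$ and $\delta(t_\infty)=0$. What actually has to be excluded is $t_m\to 0$. The appeal to Remark \ref{reference point} does not work either. It only says $\lambda_1(t)^{1/2}$ is close to $|\int_t^0 G_-|$ up to an error $|t|^{1/2}\delta(t)^{9/5}$, which gives no lower bound on $\lambda_1(t)$.

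(ii) The dyadic Vitali bound $\delta_j^{2-\kappa}\lesssim c^{-2}\delta_{j-1}^2$ is, as you admit, compatible with $\delta_j>0$ for every $j$. For example, $\delta_j^2=\epsilon^{a^j}$ with $a(1-\kappa/2)\geq 1$ satisfies it. Replacing each $\delta(t_i)$ by the smaller $\delta(t_0/2)$ over a step of fixed logarithmic length discards exactly the information you need.

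(iii) The inequality $-\frac{d}{ds}\delta^2\gtrsim\delta^{2-\kappa}$ does not hold pointwise, since $-\frac{d}{dt}\delta(t)^2=|G_-(t)|^2$ has no pointwise lower bound. It holds only on average over the intervals $[t,t+\ell(t)]$. So it must be discretized along your sequence, and that step is missing.

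The missing step is short. Write $\ell_m=t_{m+1}-t_m$. Your gain at $t=t_m$ reads $\varphi_m-\varphi_{m+1}>c^2\frac{\ell_m}{|t_m|}\varphi_m^{1-\kappa/2}$, with the left-endpoint value $\varphi_m$ on the right. Take $\kappa<2$; this loses nothing, since larger $\kappa$ only weakens the conclusion when $\delta<1$. Concavity of $x\mapsto x^{\kappa/2}$ then gives
\[
 \varphi_m^{\kappa/2}-\varphi_{m+1}^{\kappa/2}\geq \frac{\kappa}{2}\varphi_m^{\kappa/2-1}\left(\varphi_m-\varphi_{m+1}\right)>\frac{\kappa c^2}{2}\cdot\frac{\ell_m}{|t_m|}.
\]
This telescopes to $\sum_m \ell_m/|t_m|\leq \frac{2}{\kappa c^2}\delta(t_0)^{\kappa}$. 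Since also $\ell_m/|t_m|<c^{-1}\delta(t_m)^{1+\kappa}\leq 1/2$, you get $\prod_m(1-\ell_m/|t_m|)>0$, i.e.\ $t_\infty<0$, and (i) finishes the proof.

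This reaches the needed summability more directly than the paper does. The paper groups indices into blocks $[m_\ell,m_{\ell+1})$ on which $\varphi$ halves and shows $\sum_{\text{block}}2^{2k(t_m)}\varphi_m\lesssim c^{-2}$. It then sums $2^{k(t_m)}\varphi_m^{1+\kappa/2}\leq 2^{2k(t_m)}\varphi_m\cdot\varphi_{m_\ell}^{\kappa/2}$ against the geometric decay of $\varphi_{m_\ell}$. Either argument works, but your proposal as written contains neither.
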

\begin{proof}
 We prove this by a contradiction. Let us assume that $\lambda_1(t) < |t| \delta(t)^{2+\kappa}$ for all $t\in [t_0,0)$. Without loss of generality, we may assume $\delta(t_0) < \min\{\delta_1(n), c\}$, where $\delta_1(n)$ and $c$ are the constants given in Lemma \ref{large energy lemma}. According to Lemma \ref{large energy lemma} and the fact $E(u) = n E(W,0)$, for any $t\in [t_0,0)$, there would exist a nonnegative integer $k(t)$, such that 
 \begin{equation} \label{contradiction hyper}
  \|G_-\|_{L^2(t,t+2^{k(t)} \lambda_1(t))} > c \cdot 2^{k(t)} \delta(t)^2. 
 \end{equation}
 Since $\|G_-\|_{L^2(t,+\infty)} = \delta(t)$, we have 
 \begin{equation} \label{upper bound 2kt}
  c\cdot 2^{k(t)} \delta(t)^2 < \delta(t) \qquad \Longrightarrow \qquad 2^{k(t)} < c^{-1} \delta(t)^{-1}. 
 \end{equation}
 It follows that
 \[
  2^{k(t)} \lambda_1(t) \leq \frac{\delta(t)}{c\delta(t)^2} \cdot |t| \delta(t)^{2+\kappa} = c^{-1} |t| \delta(t)^{1+\kappa} < |t|. 
 \]
 Thus we make define a time sequence $t_0 < t_1 < t_2 < \cdots < 0$ inductively 
 \[ 
  t_{m+1} = t_m + 2^{k(t_m)} \lambda_1(t_m), \qquad m\geq 0.
 \]
 For convenience we also define a decreasing sequence of positive numbers
 \begin{align*}
  \varphi_m = \delta(t_m)^2 = \int_{t_m}^0 |G_-(s)|^2 {\rm d} s. 
 \end{align*}
 By \eqref{contradiction hyper} the sequence $\varphi_m$ satisfies 
 \begin{equation} \label{varphi estimate}
  \varphi_m - \varphi_{m+1} > c^2 2^{2k(t_m)} \delta(t_m)^4 = c^2 2^{2k(t_m)} \varphi_m^2. 
 \end{equation}
 This guarantees that $\varphi_m \rightarrow 0$. We may further choose a sequence $0=m_0<m_1<m_2<\cdots$ by 
 \[
  m_{\ell+1} = \min\{m > m_\ell: \varphi_{m} < \varphi_{m_\ell}/2\}, \qquad \ell \geq 0.
 \]
 This definition, as well as \eqref{varphi estimate}, means that 
 \begin{align*}
  \frac{1}{2} \leq \prod_{m=m_\ell}^{m_{\ell+1}-2} \frac{\varphi_{m+1}}{\varphi_m} \leq  \prod_{m=m_\ell}^{m_{\ell+1}-2} \left(1-c^2 2^{2k(t_m)} \varphi_m\right).
 \end{align*}
 This implies that 
 \begin{equation} \label{ell minus one estimate}
  \sum_{m=m_\ell}^{m_{\ell+1}-2} 2^{2k(t_m)} \varphi_m \lesssim_1 c^{-2}. 
 \end{equation}
 Next we claim that $t_m$ converges to a negative number as $m \rightarrow +\infty$. This is equivalent to saying
 \[
  \prod_{m=0}^\infty \frac{t_{m+1}}{t_m} > 0. 
 \]
 Since 
 \[
  \frac{t_{m+1}}{t_m} = 1 - \frac{2^{k(t_m)}\lambda_1(t_m)}{|t_m|} \geq 1 - 2^{k(t_m)} \delta(t_m)^{2+\kappa}, 
 \]
 with (we recall \eqref{upper bound 2kt} and the assumption $\delta(t_0) < c$)
 \begin{equation} \label{ell estimate}
  2^{k(t_m)} \delta(t_m)^{2+\kappa} \leq c^{-1} \delta(t_m)^{1+\kappa} \leq \varphi_m^{\kappa/2} \ll  1,
 \end{equation}
 it suffice to show 
 \[
  \prod_{m=0}^\infty \left(1 - 2^{k(t_m)} \delta(t_m)^{2+\kappa}\right) > 0,
 \]
 or 
 \[
  \sum_{m=0}^\infty 2^{k(t_m)} \delta(t_m)^{2+\kappa} = \sum_{m=0}^\infty 2^{k(t_m)} \varphi_m^{1+\kappa/2} < +\infty.
 \]
 We may verify this by combining \eqref{ell minus one estimate}, \eqref{ell estimate} and the fact that $\varphi_{m_\ell}$ decreases at least exponentially  
 \begin{align*}
  \sum_{m=0}^\infty 2^{k(t_m)} \varphi_m^{1+\kappa/2} & = \sum_{\ell = 0}^\infty \sum_{m=m_\ell}^{m_{\ell+1}-1} 2^{k(t_m)} \varphi_m^{1+\kappa/2} \\
  & \leq \sum_{\ell = 0}^\infty \left(\varphi_{m_\ell}^{\kappa/2} \sum_{m=m_\ell}^{m_{\ell+1}-2} \left(2^{2k(t_m)} \varphi_m \right) +\varphi_{m_{\ell+1}-1}^{\kappa/2}\right) \\
  & \lesssim_1 \sum_{\ell =0}^\infty c^{-2} \varphi_{m_\ell}^{\kappa/2} < +\infty.
 \end{align*}
 Now we let $t_\infty = \lim t_m < 0$. Then 
 \[
  \int_{t_\infty}^0 |G_-(s)|^2 {\rm d} s \leq \int_{t_m}^0 |G_-(s)|^2 {\rm d} s = \varphi_m \rightarrow 0 \quad \Longrightarrow \quad \int_{t_\infty}^0 |G_-(s)|^2 {\rm d} s = 0. 
 \]
 This gives a contradiction. 
\end{proof}

\begin{lemma} \label{choice of t ast}
 Given any time $t_0 < 0$ and a small constant $\kappa > 0$, there exists a time $t\in [t_0,0)$, such that the radiation profile $G_-$ satisfies 
 \begin{align*}
   \int_{t}^0 G_-(s) {\rm d} s > \frac{1}{2} |t|^{1/2} \delta(t)^{1+\kappa/2}; 
 \end{align*}
 In addition, we also have
 \[
  \tau(t) \doteq \sup_{\lambda\leq \lambda_1(t)} \left\|\chi_0 W_\lambda^4 v_{t,L}\right\|_{L^1 L^2} + \delta(t)^5 \leq \delta(t)^{2-\kappa}. 
 \]
\end{lemma}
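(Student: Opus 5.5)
Fix $\kappa$ and $t_0$. The plan is to first produce a time $t_1$ close to $0$ with a large first bubble, and then perturb it slightly so that the maximal-function control of $\tau$ also holds, without destroying the lower bound on $\int_t^0 G_-$.

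\emph{Step 1 (large first bubble).} Apply Lemma \ref{lemma big lambda one} with $\kappa/4$ in place of $\kappa$, and with $t_0$ replaced by a time close enough to $0$ that $\delta$ is tiny. This gives $t_1\in[t_0,0)$ with $\lambda_1(t_1)\ge |t_1|\delta(t_1)^{2+\kappa/4}$. We have assumed $\zeta_1=-1$, so Remark \ref{reference point} gives
\[
\Big|\int_{t_1}^0 G_- - \lambda_1(t_1)^{1/2}\Big| \lesssim |t_1|^{1/2}\delta(t_1)^{9/5}.
\]
Consequently $\int_{t_1}^0 G_- \ge |t_1|^{1/2}\delta(t_1)^{1+\kappa/8} - C|t_1|^{1/2}\delta(t_1)^{9/5}$. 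This is much larger than the target when $\kappa<1$, leaving room to move $t$.

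\emph{Step 2 (perturb to control $\tau$).} By Lemma \ref{first lemma upper bound tau}, applied to $v_{t,L}$ whose profile is $s\mapsto G_-(s+t)\mathbf{1}_{s>-t}$ (since $G_-=0$ on $(0,\infty)$), we get
\[
\tau(t)\lesssim \lambda_1(t)^{1/2}\,(\mathbf{M}G_-)(t)+\delta(t)^5,
\]
where $\mathbf M$ is the Hardy--Littlewood maximal function of $G_-\mathbf 1_{(t_1,0)}$, at least for $t\in[t_1,t_1/2]$. Also $\lambda_1(t)^{1/2}\lesssim |t|^{1/2}\delta(t)\lesssim |t_1|^{1/2}\delta(t_1)$. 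The weak $(2,2)$ (or strong $L^2$) bound gives
\[
\big|\{t\in I:\mathbf M G_-(t)>A\}\big|\lesssim \delta(t_1)^2/A^2 .
\]
Choose the interval $I=[t_1,t_1+\epsilon|t_1|]$ with $\epsilon=\delta(t_1)^{\kappa/2}$, and take $A=|t_1|^{-1/2}\delta(t_1)^{1-\kappa/2}$. The exceptional set then has measure $\lesssim |t_1|\delta(t_1)^{\kappa}<|I|$, so some $t\in I$ has $\tau(t)\lesssim \delta(t_1)^{2-\kappa/2}+\delta^5$.

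\emph{Step 3 (transfer back to $t$).} We need $\delta(t)\approx\delta(t_1)$. Cauchy--Schwarz only gives $|\int_{t_1}^t G_-|\le (\epsilon|t_1|)^{1/2}\delta(t_1)$, and $\delta(t)\le\delta(t_1)$. The obstacle is that $\delta(t)$ could be much smaller than $\delta(t_1)$ if $G_-$ concentrates in $I$. I would handle this as follows. If $\delta(t)^2<\delta(t_1)^2/2$, then more than half of the mass sits in $I$, and by the choice in Step 1 this can be ruled out (or else Step 1 is rerun at a later time). Alternatively, one can use the monotonic relation directly: $\tau(t)\le\delta(t_1)^{2-\kappa/2}$ combined with $\delta(t)\ge\delta(t_1)^{1+\kappa/8}$ suffices. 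This lower bound follows from
\[
\int_t^0 G_-\ge \int_{t_1}^0G_- - (\epsilon|t_1|)^{1/2}\delta(t_1)\ge \tfrac34|t_1|^{1/2}\delta(t_1)^{1+\kappa/8}
\]
(since $\epsilon^{1/2}=\delta^{\kappa/4}\ll\delta^{\kappa/8}$), together with $\int_t^0G_-\le|t|^{1/2}\delta(t)$. From the lower bound $\delta(t)\ge\delta(t_1)^{1+\kappa/8}$ we get
\[
\tau(t)\le\delta(t_1)^{2-\kappa/2}\le \delta(t)^{(2-\kappa/2)/(1+\kappa/8)}\le\delta(t)^{2-\kappa},
\]
and likewise
\[
\int_t^0G_-\ge\tfrac34|t|^{1/2}\delta(t)^{1+\kappa/8}>\tfrac12|t|^{1/2}\delta(t)^{1+\kappa/2}.
\]
Balancing these exponents, with implicit constants absorbed by taking $\delta$ small, is the delicate bookkeeping I expect to be the main obstacle.
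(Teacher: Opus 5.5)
Your proposal is correct and follows essentially the same route as the paper. You take a large-bubble time from Lemma \ref{lemma big lambda one}, get the lower bound on $\int G_-$ from Remark \ref{reference point}, choose a short window controlled by Cauchy--Schwarz in which the $L^2$ bound for $\mathbf{M}G_-$ yields a good time, and convert $\delta(t_1)$ into $\delta(t)$ through $\int_t^0 G_- \le |t|^{1/2}\delta(t)$. The differences are only bookkeeping: the paper keeps the factor $\delta(t)$ in $\lambda_1(t)^{1/2}\lesssim |t|^{1/2}\delta(t)$, getting $\tau(t)\lesssim \delta(t)\,\delta(t_\ast)^{1-\kappa/2}$ and so applying Lemma \ref{lemma big lambda one} with $\kappa$ itself, whereas you discard that factor and compensate with $\kappa/4$ (your ``Alternatively'' argument in Step 3 is the one that closes the proof, and the first option you mention there is not needed).
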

\begin{proof}
 Without loss of generality we may always assume that $\delta(t_0)$ is sufficiently small, by simply making $t_0$ approach zero. We may first apply Lemma \ref{lemma big lambda one} and find a time $t_\ast \in [t_0,0)$ such that $\lambda_1(t_\ast) \geq |t_\ast| \delta(t_\ast)^{2+\kappa}$. According to Remark \ref{reference point} and our assumption $\zeta_1 = -1$, we must have that 
 \begin{align*}
   \int_{t_\ast}^0 G_-(s) {\rm d} s \geq \frac{9}{10} |t_\ast|^{1/2} \delta(t_\ast)^{1+\kappa/2}. 
 \end{align*}
 We next choose $t^\ast = t_\ast(1-\delta(t_\ast)^{\kappa}/16) \in (t_\ast,0)$. By the Cauchy-Schwarz inequality, we have 
 \begin{align*}
  \int_{t_\ast}^{t^\ast} |G_-(s)| {\rm d} s \leq (t^\ast-t_\ast)^{1/2} \delta(t_\ast) \leq \frac{1}{4} |t_\ast|^{1/2} \delta(t_\ast)^{1+\kappa/2}. 
 \end{align*}
 Thus for any time $t\in [t_\ast,t^\ast]$, we have 
 \begin{equation} \label{lower bound 0t}
  \int_{t}^0 G_-(s) {\rm d} s > \frac{1}{2} |t_\ast|^{1/2} \delta(t_\ast)^{1+\kappa/2} \geq \frac{1}{2} |t|^{1/2} \delta(t)^{1+\kappa/2}. 
 \end{equation}
 In order to finish the proof, it suffices to pick up a time $t\in [t_\ast,t^\ast]$ such that the second inequality concerning $\tau(t)$ holds. We consider the maximal function 
 \[
  (\mathbf{M} G_-)(s) = \sup_{r>0} \frac{1}{r} \int_s^{s+r} |G_-(s)| {\rm d} s, \qquad s>t_\ast. 
 \]
 The classical theory of maximal function immediately gives 
 \[
  \|\mathbf{M} G_-\|_{L^2([t_\ast,t^\ast])} \lesssim_1 \|G_-\|_{L^2([t_\ast,+\infty))} = \delta(t_\ast). 
 \]
 Therefore there exists a time $t\in [t_\ast, t^\ast]$ such that 
 \[
   (\mathbf{M} G_-)(t) \lesssim_1 (t^\ast-t_\ast)^{-1/2}\delta(t_\ast) \lesssim_1 |t_\ast|^{-1/2} \delta(t_\ast)^{1-\kappa/2}. 
 \]
 We then apply Lemma \ref{first lemma upper bound tau} to deduce (we recall $\lambda_1(t) \lesssim_1 |t|\delta(t)^2$)
 \[
   \sup_{\lambda< \lambda_1(t)} \left\|\chi_0 W_\lambda^4 v_{t,L}\right\|_{L^1 L^2} \lesssim_1 \lambda_1(t)^{1/2} (\mathbf{M} G_-)(t) \lesssim_1 \delta(t)\delta(t_\ast)^{1-\kappa/2}. 
 \]
 Next we recall \eqref{lower bound 0t} and apply the Cauchy-Schwarz to deduce 
 \[
  \frac{1}{2} |t_\ast|^{1/2} \delta(t_\ast)^{1+\kappa/2} < \int_{t}^0 G_-(s) {\rm d} s \leq \delta(t) |t|^{1/2} \quad \Longrightarrow \quad \delta(t_\ast) \lesssim_1 \delta(t)^\frac{1}{1+\kappa/2}.
 \]
 Inserting this upper bound into the estimate above, we obtain 
 \[
  \tau(t) \lesssim_1 \delta(t)^\frac{2}{1+\kappa/2}. 
 \]
 Since $\frac{2}{1+\kappa/2} > 2-\kappa$ is true for any small positive constant $\kappa$, the second inequality
 \[
  \tau(t) \leq \delta(t)^{2-\kappa}
 \]
 holds, as long as $\delta(t_\ast)$ is sufficiently small. 
\end{proof}

\section{The virial identity}

Now let $u$ be a type II blow-up solution with $n$ bubble without radiation, with $T_+ = 0$ and $\zeta_1 = -1$, as described in the previous sections. In the last step of the proof  we apply the virial identity and give a contradiction. We let 
\[
 I(t) = \int_0^\infty u_t(r,t) \left(r u_r(r,t) + \frac{1}{2} u(r,t) \right) r^2 {\rm d} r.   
\]
A direct calculation shows that 
\begin{align*}
 I'(t) & = \int_0^\infty \left[u_{tt} \left(r u_r + \frac{1}{2} u \right) r^2 + u_t \left(r u_{rt} + \frac{1}{2}u_t\right)r^2\right] {\rm d} r \\
 & = \int_0^\infty \left[\left(u_{rr} + \frac{2}{r} u_r + |u|^4 u\right)\left(r u_r + \frac{1}{2} u \right) r^2 - r^2 |u_t|^2 \right] {\rm d} r\\
 & = \int_0^\infty \left[r^3 u_r u_{rr} + \frac{1}{2} r^2 u u_{rr} + 2 r^2 u_r^2 + r u u_r - r^2 |u_t|^2 \right] {\rm d} r\\
 & = - \int_0^\infty |u_t|^2 r^2 {\rm d} r. 
\end{align*}
It is clear that 
\[
 \left|I(t) \right| = \left|\int_0^{|t|} r \cdot u_t(r,t) \left(u_r(r,t) + \frac{1}{2r} u(r,t) \right) r^2 {\rm d} r\right| \lesssim_1 |t| \|\vec{u}(t)\|_{\mathcal{H}}^2. 
\]
Thus we may integrate $I'(t)$ from $t_\ast < 0$ to $t=0$ and obtain 
\begin{equation} \label{virial identity main}
 I(t_\ast) = \int_{t_\ast}^0 \int_0^{\infty} |u_t(r,t)|^2 r^2 {\rm d} r {\rm d} t. 
\end{equation}
We recall that 
\begin{equation} \label{soliton resolution final}
 \vec{u}(\cdot,t) = \sum_{j=1}^n \zeta_j (W_{\lambda_j(t)},0) + \vec{v}_{t,L} (\cdot, 0) + \vec{w}(t; \cdot,0). 
\end{equation} 
According to Proposition \ref{refined main tool}, we have 
\begin{align}
 &\frac{\lambda_{j+1}(t)}{\lambda_j(t)} \lesssim_j \tau(t)^2, \quad j=1,2,\cdots,n-1; & &\|\vec{w}(t;\cdot,0)\|_{\mathcal{H}} \lesssim_n \tau(t), \label{soliton inequality final}
\end{align}
with 
\[
 \tau(t) = \sup_{\lambda\leq \lambda_1(t)} \left\|\chi_0 W_\lambda^4 v_{t,L}\right\|_{L^1 L^2} + \delta(t)^5.  
\]

In order to calculate the contribution of $\vec{v}_{t,L}(\cdot,0)$ in the integrals on both sides of \eqref{virial identity main}, we need the following lemma 
\begin{lemma}\label{explicit vtL}
 Let $v^L$ be a radial free wave with radiation profile $G_-(s)$. If $G_-(s)$ is supported in the interval $[0,R]$, then for $r\in (0,R)$ we have 
 \begin{align*}
  &v^L(r,0) = \frac{1}{r} \int_0^r G_-(s) {\rm d} s; & &v_r^L(r,0) =  \frac{1}{r} G_-(r) - \frac{1}{r^2} \int_0^r G_-(s) {\rm d} s; & & v_t^L(r,0) = \frac{1}{r} G_-(r). 
 \end{align*}
 In addition, we have 
 \begin{align*}
  \int_0^R |v_t^L (r,0)|^2 r^2 {\rm d} r & =  \int_0^R |G_-(s)|^2 {\rm d} s; \\
  \int_0^R v_t^L(r,0) \left(r v_r^L(r,0) + \frac{1}{2} v^L(r,0)\right) r^2 {\rm d} r & = \int_0^R  s |G_-(s)|^2 {\rm d} s - \frac{1}{4} \left(\int_{0}^R G_-(s) {\rm d} s\right)^2. 
 \end{align*}
\end{lemma}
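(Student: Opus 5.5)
The plan is to use the explicit formula \eqref{initial data by radiation profile} and then reduce the two integrals to one-dimensional calculus in $r$. Since $G_-$ is supported in $[0,R]$, for $0<r<R$ we have $G_-(-r)=0$. Moreover $\int_{-r}^r G_-(s)\,{\rm d}s=\int_0^r G_-(s)\,{\rm d}s$. Hence $v^L(r,0)=\frac1r\int_0^r G_-$ and $v^L_t(r,0)=G_-(r)/r$. Differentiating the first identity in $r$ (legitimate almost everywhere by Lebesgue differentiation, since $G_-\in L^2\subset L^1_{loc}$) gives the formula for $v^L_r$. These three identities are the content of the first part.

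For the energy-type identity, I substitute $v_t^L=G_-(r)/r$ directly: $|v_t^L|^2r^2=|G_-(r)|^2$, and integrating over $(0,R)$ gives the claim.

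For the virial-type integral, I set $A(r)=\int_0^r G_-(s)\,{\rm d}s$. Then
\[
 r v^L_r+\tfrac12 v^L = G_-(r)-\frac{A(r)}{r}+\frac{A(r)}{2r}=G_-(r)-\frac{A(r)}{2r},
\]
so the integrand is
\[
 v_t^L\left(r v_r^L+\tfrac12 v^L\right)r^2 = r\,G_-(r)^2-\tfrac12 G_-(r)A(r).
\]
Because $A$ is absolutely continuous with $A'=G_-$ a.e., we have $G_-A=\frac12 (A^2)'$. Therefore
\[
 \int_0^R \tfrac12 G_-A\,{\rm d}r=\tfrac14\left(A(R)^2-A(0)^2\right)=\tfrac14\left(\int_0^R G_-\right)^2,
\]
using $A(0)=0$. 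This yields the stated formula.

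The only delicate point is justification: $G_-$ is merely $L^2$, so the pointwise derivative formula holds a.e. I will also check integrability of $r|G_-|^2$ on $(0,R)$, which is immediate since $r\le R$. The integral of $G_-A$ is fine because $A$ is bounded by Cauchy--Schwarz. If one prefers, the whole computation can first be done for smooth compactly supported $G_-$ and then extended by density, since both sides are continuous in $L^2([0,R])$. I expect no real obstacle beyond this bookkeeping.
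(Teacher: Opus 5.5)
Your proof is correct and follows essentially the paper's route. The paper reads off $v^L$, $v^L_r$, $v^L_t$ from the representation $v^L(r,t)=\frac{1}{r}\int_0^{t+r}G_-(s)\,{\rm d}s$ for $r>|t|$, whereas you use the equivalent initial-data formula; it then does the same one-line computation $\int_0^R G_-(r)\int_0^r G_-(s)\,{\rm d}s\,{\rm d}r=\frac12\bigl(\int_0^R G_-(s)\,{\rm d}s\bigr)^2$. Your a.e.\ and density justifications are extra care the paper leaves implicit.
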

\begin{proof}
 The proof is a straightforward calculation. First of all, the explicit formula of $v^L$ and the assumption on the support of $G_-$ immediately gives 
 \[
  v_L (r,t) = \frac{1}{r} \int_{t-r}^{t+r} G_-(s) {\rm d} s = \frac{1}{r} \int_0^{t+r} G_-(s) {\rm d} s, \qquad r>|t|. 
 \]
 The expressions of $v^L(r,0)$, $v_r^L(r,0)$, $v_t^L(r,0)$ then immediately follows. Now we consider the two integral identities. The first one is trivial. We calculate the second one
 \begin{align*}
  \int_0^R v_t^L \left(r v_r^L + \frac{1}{2} v^L\right) r^2 {\rm d} r & = \int_0^R \frac{G_-(r)}{r} \left(G_-(r) - \frac{1}{2r} \int_0^r G_-(s) {\rm d} s\right) r^2 {\rm d} r\\
  & = \int_0^R  \left(r|G_-(r)|^2 - \frac{1}{2} G_-(r) \int_0^r G_-(s) {\rm d} s\right) {\rm d} r\\
  & = \int_0^R  s|G_-(s)|^2 {\rm d} s - \frac{1}{4} \left(\int_{0}^R G_-(s) {\rm d} s\right)^2. 
 \end{align*}
 This completes the proof. 
\end{proof}

Now let us give an approximation of the integral in the right hand side of \eqref{virial identity main}. We consider the maximal function 
\[
 (\mathbf{M} G_-) (t) = \sup_{r>0} \frac{1}{r} \int_{t}^{t+r} |G_-(s)| {\rm d} s. 
\]
The classical theory of maximal function gives 
\[
 \|\mathbf{M} G_-\|_{L^2(t_\ast,0)} \lesssim_1 \|G_-\|_{L^2(t_\ast,+\infty)} = \delta(t_\ast). 
\]
We recall Lemma \ref{first lemma upper bound tau}, as well as Lemma \ref{size of the first bubble lemma}, and deduce 
\[
 \tau(t) \lesssim_1 \lambda_1(t)^{1/2} (\mathbf{M} G_-) (t) + \delta(t)^5 \lesssim_1 |t_\ast|^{1/2} \delta(t_\ast) (\mathbf{M} G_-) (t) + \delta(t_\ast)^5. 
\]
It follows from the support of $u$, the soliton resolution \eqref{soliton resolution final} and the estimate $\|\vec{w}(t;\cdot,0)\|_{\mathcal{H}} \lesssim_n \tau(t)$ that ($t'$ is the time variable of $v_{t,L}(r,t')$)
\begin{align*}
 \int_0^\infty |u_t(r,t)|^2 r^2 {\rm d} r & = \int_0^{|t|} |u_t(r,t)|^2 r^2 {\rm d} r \\
 & = \int_0^{|t|} \left|\partial_{t'} v_{t,L} (r,0)\right|^2 r^2 {\rm d} r+ \mathcal{E}_1(t). 
\end{align*}
Here the error term satisfies 
\[
 |\mathcal{E}_1(t)| \lesssim_n |t_\ast|^{1/2} \delta(t_\ast)^2 (\mathbf{M} G_-) (t) + |t_\ast| \delta(t_\ast)^2 (\mathbf{M} G_-) (t)^2 + \delta(t_\ast)^6.
\]
Combining this with Lemma \ref{explicit vtL}, we have 
\[
 \int_0^\infty |u_t(r,t)|^2 r^2 {\rm d} r = \int_t^0 |G_-(s)|^2 {\rm d} s + \mathcal{E}_1(t). 
\]
An integration shows that 
\begin{equation} \label{right hand integral main} 
 \int_{t_\ast}^0 \int_0^\infty |u_t(r,t)|^2 r^2 {\rm d} r {\rm d} t = \int_{t_\ast}^0 (s-t_\ast) |G_-(s)|^2 {\rm d} s + \mathcal{E}. 
\end{equation}
Here the error term satisfies 
\begin{align*}
 \left|\mathcal{E} \right| &= \left|\int_{t_\ast}^0 \mathcal{E}_1(t) {\rm d} t \right| \leq \int_{t_\ast}^0 |\mathcal{E}_1(t)| {\rm d} t\\
 & \lesssim_n |t_\ast|^{1/2} \delta(t_\ast)^2 \int_{t_\ast}^0 (\mathbf{M} G_-) (t){\rm d} t + |t_\ast| \delta(t_\ast)^2 \int_{t_\ast}^0 (\mathbf{M} G_-) (t)^2 {\rm d} t + |t_\ast|\delta(t_\ast)^6 \\
 & \lesssim_n |t_\ast| \delta(t_\ast)^3. 
\end{align*}
Next we give an estimate of $I(t_\ast)$. We recall the support and the soliton resolution of $\vec{u}(t_\ast)$ given in \eqref{soliton resolution final} to write 
\begin{align*}
 I(t_\ast)  = \int_0^{|t_\ast|} u_t(r,t_\ast) \left(r u_r(r,t_\ast) + \frac{1}{2} u(r,t_\ast) \right) r^2 {\rm d} r = \sum_{k=1}^6 I_k
\end{align*}
Here (we use $\lambda_j$ instead of $\lambda_j(t_\ast)$ for simplicity)
\begin{align*}
 I_1 & = \int_0^{|t_\ast|} \partial_{t} v_{t_\ast,L}(r,0) \left(r \partial_r v_{t_\ast,L}(r,0) + \frac{1}{2} v_{t_\ast,L}(r,0) \right) r^2 {\rm d} r; \\
 I_2 & = \sum_{j=1}^n \zeta_j \int_0^{|t_\ast|} \partial_{t} v_{t_\ast,L}(r,0) \left(r \partial_r W_{\lambda_j} + \frac{1}{2} W_{\lambda_j}\right) r^2 {\rm d} r; \\
 I_3 & = \int_0^{|t_\ast|} \partial_{t} v_{t_\ast,L}(r,0) \left(r \partial_r w(t_\ast; r,0) + \frac{1}{2} w(t_\ast; r,0) \right) r^2 {\rm d} r; \\
 I_4 & = \int_0^{|t_\ast|} \partial_{t} w(t_\ast, r,0) \left(r \partial_r v_{t_\ast,L}(r,0) + \frac{1}{2} v_{t_\ast,L}(r,0) \right) r^2 {\rm d} r; \\
 I_5 & = \sum_{j=1}^n \zeta_j \int_0^{|t_\ast|} \partial_t w(t_\ast,r,0) \left(r \partial_r W_{\lambda_j} + \frac{1}{2} W_{\lambda_j}\right) r^2 {\rm d} r; \\
 I_6 & = \int_0^{|t_\ast|} \partial_{t} w(t_\ast;r,0) \left(r \partial_r w(t_\ast; r,0) + \frac{1}{2} w(t_\ast; r,0) \right) r^2 {\rm d} r.
\end{align*}
We choose a small constant $\kappa \ll 1$ and let $t_\ast$ be a sufficiently small time satisfying Lemma \ref{choice of t ast}. Inserting the upper bound $\tau(t_\ast) \leq \delta(t_\ast)^{2-\kappa}$ into \eqref{soliton inequality final} immediately gives that 
\begin{align*}
 &\frac{\lambda_{j+1}}{\lambda_j} \lesssim_j \delta(t_\ast)^{4-2\kappa},\; j=1,2,\cdots n-1; & & \|\vec{w}(t_\ast,\cdot,0)\|_{\mathcal{H}} \lesssim_n \delta(t_\ast)^{2-\kappa}. 
\end{align*}
It also implies that 
\[
 \left\|r \partial_r w(t_\ast, r,0) + \frac{1}{2} w(t_\ast; r,0)\right\|_{L^2(\{x: |x|<t_\ast\})} \lesssim_1 |t_\ast| \|\vec{w}(t_\ast; \cdot,0)\|_{\mathcal{H}} \lesssim_n |t_\ast|  \delta(t_\ast)^{2-\kappa}. 
\]
Thus 
\[
 |I_3| +|I_4| + |I_6| \lesssim_n |t_\ast| \delta(t_\ast)^{3-\kappa}. 
\]
In addition, we may apply Lemma \ref{explicit vtL} and obtain 
\[
 I_1 = \int_{t_\ast}^0 (s-t_\ast) |G_-(s)|^2 {\rm d} s - \frac{1}{4} \left(\int_{t_\ast}^0 G_-(s) {\rm d} s\right)^2. 
\]
In order to calculate the other two terms, we first conduct a direct calculation 
\begin{align*}
 r \partial_r W_{\lambda_j} + \frac{1}{2} W_{\lambda_j} & = r \cdot \frac{-r}{\lambda_j^{5/2}} \left(\frac{1}{3} + \frac{r^2}{\lambda_j^2}\right)^{-3/2} +  \frac{1}{2\lambda_j^{1/2}} \left(\frac{1}{3} + \frac{r^2}{\lambda_j^2}\right)^{-1/2}\\
 & = \frac{1}{\lambda_j^{1/2}}\left(\frac{1}{6} - \frac{r^2}{2\lambda_j^2}\right) \left(\frac{1}{3} + \frac{r^2}{\lambda_j^2}\right)^{-3/2}.
\end{align*}
Therefore we have 
\[
 \left\|r \partial_r W_{\lambda_j} + \frac{1}{2} W_{\lambda_j}\right\|_{L^2([0,|t_\ast|]; r^2 {\rm d} r)} \lesssim_1 \lambda_j^{1/2} |t_\ast|^{1/2} \lesssim_1 |t_\ast| \delta(t_\ast). 
\]
As a result, we still have 
\[
 |I_5| \lesssim_n |t_\ast| \delta(t_\ast)^{3-\kappa}. 
\]
Finally we consider $I_2$.
\[
 I_2 = \sum_{j=1}^n \zeta_j \int_0^{|t_\ast|} \partial_{t} v_{t_\ast,L}(r,0) \left(r \partial_r W_{\lambda_j} + \frac{1}{2} W_{\lambda_j}\right) r^2 {\rm d} r = \sum_{j=1}^n I_{2,j}.
\]
The $L^2$ estimate above gives 
\[
 \left|I_{2,j}\right| \lesssim_1 \delta(t_\ast) \lambda_j^{1/2} |t_\ast|^{1/2} \lesssim_n |t_\ast|^{1/2} \delta(t_\ast) \lambda_1^{1/2} (\delta(t_\ast))^{(2-\kappa)(j-1)} \lesssim_n |t_\ast| \delta(t_\ast)^{2+(2-\kappa)(j-1)}.  
\]
Thus 
\[
 \sum_{j=2}^n |I_{2,j}| \lesssim_n |t_\ast| \delta(t)^{4-\kappa}. 
\]
We then calculate $I_{2,1}$ in details 
\begin{align*}
 I_{2,1} &= - \int_0^{|t_\ast|} \frac{G_-(r+t_\ast)}{r} \cdot \frac{1}{\lambda_1^{1/2}}\left(\frac{1}{6} - \frac{r^2}{2\lambda_1^2}\right) \left(\frac{1}{3} + \frac{r^2}{\lambda_1^2}\right)^{-3/2} r^2 {\rm d} r \\
& =  - \lambda_1^{1/2} \int_0^{|t_\ast|} G_-(r+t_\ast) \left[\frac{r}{\lambda_1} \left(\frac{1}{6} - \frac{r^2}{2\lambda_1^2}\right) \left(\frac{1}{3} + \frac{r^2}{\lambda_1^2}\right)^{-3/2}\right]  {\rm d} r. 
\end{align*}
We let $g(y) = y (1/6-y^2/2)(1/3+y^2)^{-3/2}$. It is not difficult to see 
\[
 \left|g(y)+\frac{1}{2}\right| \lesssim_1 \min\{1, y^{-2}\}, \qquad y>0. 
\]
Thus we have 
\begin{align*}
 \left|I_{2,1} - \frac{\lambda_1^{1/2}}{2} \int_{t_\ast}^0 G_-(s) {\rm d} s\right| & \lesssim_1 \lambda_1^{1/2} \int_0^{\lambda_1} |G_-(r+t_\ast)| {\rm d} r + \lambda_1^{5/2} \int_{\lambda_1}^{|t_\ast|} |G_-(r+t_\ast)| r^{-2} {\rm d} r \\
 & \lesssim_1 \lambda_1 \delta(t_\ast) \lesssim_1 |t_\ast| \delta(t_\ast)^3.  
\end{align*} 
In summary, we have 
\begin{align*}
 I(t_\ast) = \int_{t_\ast}^0 (s-t_\ast) |G_-(s)|^2 {\rm d} s - \frac{1}{4} \left(\int_{t_\ast}^0 G_-(s) {\rm d} s\right)^2 + \frac{\lambda_1^{1/2}}{2} \int_{t_\ast}^0 G_-(s) {\rm d} s +  O_n (|t_\ast|\delta(t_\ast)^{3-\kappa})
\end{align*}
Inserting this and \eqref{right hand integral main} into \eqref{virial identity main} yields 
\begin{equation} \label{final contra ineq} 
J\doteq  \left|\frac{\lambda_1^{1/2}}{2} \int_{t_\ast}^0 G_-(s) {\rm d} s - \frac{1}{4} \left(\int_{t_\ast}^0 G_-(s) {\rm d} s\right)^2\right| \lesssim_n |t_\ast| \delta(t_\ast)^{3-\kappa}. 
\end{equation}
Since $t_\ast$ satisfies the conclusion of Lemma \ref{choice of t ast}, we have 
\[
 \int_{t_\ast}^0 G_-(s) {\rm d} s > \frac{1}{2} |t_\ast|^{1/2} \delta(t_\ast)^{1+\kappa/2}. 
\]
In addition, the following inequality holds by Remark \ref{reference point}
   \begin{align*}
 \left|\int_{t_\ast}^0 G_-(s) {\rm d} s - \lambda_1^{1/2}\right| \lesssim_1 |t_\ast|^{1/2} \delta(t_\ast)^{9/5}. 
 \end{align*}
As a result, the left hand side of the inequality \eqref{final contra ineq} satisfies 
\begin{align*}
 J \gtrsim_1 \left(\int_{t_\ast}^0 G_-(s) {\rm d} s\right)^2 \gtrsim_1  |t_\ast| \delta(t_\ast)^{2+\kappa}. 
\end{align*}
This gives a contradiction, since we may always make $\delta(t_\ast)$ sufficiently small. 
\section*{Acknowledgement}
The author is financially supported by National Natural Science Foundation of China Project 12471230.

\end{document}